\newcommand{\HH}{\mathbb{H}}
\newcommand{\M}{\mathcal{M}}
\newcommand{\QM}{\mathcal{QM}}
\newcommand{\C}{\mathbb{C}}
\newcommand{\N}{\mathbb{N}}
\newcommand{\id}{\mathrm{id}}
\newcommand{\trans}{\mathsf{T}}
\newcommand{\Kup}{K^{\mathrm{up}}}
\newcommand{\Kdown}{K^{\mathrm{down}}}
\newtheorem{theorem}{Theorem}
\newtheorem{lemma}[theorem]{Lemma}
\newtheorem{proposition}[theorem]{Proposition}
\theoremstyle{definition}
\newtheorem{definition}[theorem]{Definition}
\newtheorem{remark}[theorem]{Remark}
\numberwithin{equation}{section}
\numberwithin{theorem}{section}
\begin{document}


%
%

\title{Quasimodular forms as solutions of modular
  differential equations}

\author{Peter J. Grabner}
\thanks{The author is supported by the Austrian
    Science Fund FWF project F5503 (part of the Special Research Program (SFB)
    ``Quasi-Monte Carlo Methods: Theory and Applications'')}

\address{Institut f\"ur Analysis und Zahlentheorie,
Technische Universit\"at Graz,\\
Kopernikusgasse 24,
8010 Graz,
Austria}
\email{peter.grabner@tugraz.at}

\maketitle


\begin{abstract}
  We study quasimodular forms of depth $\leq4$ and determine under which
  conditions they occur as solutions of modular differential
  equations. Furthermore, we study which modular differential equations have
  quasimodular solutions. We use these results to investigate extremal
  quasimodular forms as introduced by M.~Kaneko and M.~Koike
  further. Especially, we prove a conjecture stated by these authors concerning
  the divisors of the denominators occurring in their Fourier expansion.
\end{abstract}

\keywords{Balanced quasimodular forms; modular differential equations;
  quasimodular vectors; extremal quasimodular forms}


\section{Introduction}\label{sec:introduction}

The notion of ``quasimodular form'' was coined by M.~Kaneko and D.~Zagier in
\cite{Kaneko_Zagier1995:generalized_jacobi_theta}. Since then quasimodular
forms have gained increasing attention as they have intrinsic connections to
very different fields of mathematics and beyond. For two excellent
introductions to the subject we refer to
\cite{Royer2012:quasimodular_forms_introduction,
  Zagier2008:elliptic_modular_forms}.

There has been an extensive study of linear differential equations, whose
solution set is invariant under modular transformations
(see \cite{Kaneko_Nagatomo_Sakai2017:third_order_modular,
  Kaneko_Koike2004:quasimodular_solutions_differential,
  Kaneko_Koike2003:modular_forms_hypergeometric,
  Kawasetsu_Sakai2018:modular_linear_differential,
  Arike_Kaneko_Nagatomo+2016:affine_vertex_operator,
  Franc_Mason2016:hypergeometric_series_modular,
  Mason2007:vector_valued_modular}). Such differential equations can be used to
study families of modular forms and quasimodular forms. The question, under
which conditions such equations have modular or quasimodular solutions, is very
prominent in many of these papers. The present paper will shed some new light
on that and gives a unified view on the subject.

Quasimodular forms gained new interest since they
occurred prominently in the construction of certain Fourier eigenfunctions with
prescribed zeros. These were used in the proof that in dimensions $8$ and $24$
the $E_8$ and the Leech lattice achieve the best packing (see
\cite{Cohn-Kumar-Miller+2017:sphere_packing_24,
  Viazovska2017:sphere_packing_8}), as well as in the proof of universal
optimality of these lattices (see
\cite{Cohn_Kumar_Miller+2019:universal_optimality}). For a survey on the
construction of such Fourier eigenfunctions we refer to
\cite{Feigenbaum_Grabner_Hardin2020:eigenfunctions_fourier_transform}. In this
paper modular differential equations are used to encode the asymptotic
behavior of quasimodular forms in a concise and tractable way. The
differential equations are then used to derive linear recurrence relations for
the quasimodular forms of interest.

In a series of papers M.~Knopp and many coauthors have introduced and studied
vector valued modular forms
\cite{Franc_Mason2016:hypergeometric_series_modular,
  Franc_Mason2014:fourier_coefficients_vector,
  Knopp_Mason2012:logarithmic_vector_valued,
  Mason2012:fourier_coefficients_vector,
  Knopp_Mason2011:logarithmic_vector_valued,
  Marks_Mason2010:structure_module_vector, Mason2008:2_dimensional_vector,
  Mason2007:vector_valued_modular, Knopp_Mason2004:vector_valued_modular,
  Knopp_Mason2003:vector_valued_modular}.  Modular differential equations also
play a role in this context, as they are a method to capture properties of the
components of a vector valued form in a concise way (see
\cite{Franc_Mason2016:hypergeometric_series_modular,
  Mason2007:vector_valued_modular}). These give rise to representations of the
modular group. In this context the action of the map $T:z\mapsto z+1$ is always
diagonizable. We will study a similar concept for quasimodular forms, where the
action of $T$ will turn out not to be diagonizable.

The paper is organized as follows. In Section~\ref{sec:basics} we recall some
basic facts and definitions about modular forms and  quasimodular forms. We
shortly recall the Frobenius ansatz method for finding holomorphic solutions of
differential equations.

In Section~\ref{sec:quasi-modul-vect} we introduce and study quasimodular
vectors as an analogue to vector valued modular forms. We derive several
properties that have been known for the modular case and will be used later in
this paper.

In Section~\ref{sec:balanc-quas-forms} we introduce the notion of
\emph{balanced} quasimodular forms. These are forms $f$, which exhibit certain
patterns for the vanishing orders at $i\infty$ of the quasimodular forms
occurring in the transformation behavior of $f$ under the modular group. We
find estimates for the vanishing orders of such forms and prove that they are
solutions of modular differential equations if their depth is $\leq4$. There is
only one degenerate exception to this rule, namely powers of $\Delta$, which
occur as solutions of modular differential equations of any order.

In Section~\ref{sec:modul-diff-equat} we study modular differential equations
which have (balanced) quasimodular solutions. We give a full description of all
differential equations of order $\leq5$ with quasimodular solutions.

In Section~\ref{sec:extr-quas-forms} we use the results of
Section~\ref{sec:modul-diff-equat} to derive differential recursions for
extremal quasimodular forms. Such forms are defined by the property that they
have maximal possible order of vanishing at $i\infty$ (see
\cite{Kaneko_Koike2006:extremal_quasimodular_forms}). The recursions obtained
are then used to prove a conjecture stated in
\cite{Kaneko_Koike2006:extremal_quasimodular_forms} concerning the divisors of
the denominators of the Fourier coefficients of such forms of depth
$\leq4$. Recently, a different proof of this conjecture for extremal
quasimodular forms of depth $1$ and weight divisible by $6$ was given by
F.~Pellarin and G.~Nebe
\cite{Pellarin_Nebe2019:extremal_quasi_modular}. A.~Mono
\cite{Mono2020:conjecture_kaneko_koike} gave an independent proof for depth
$1$, which could also cover weights in the residue classes $\equiv2,4\pmod6$.

In an Appendix we collect some huge expressions for polynomials and modular
forms that occur in Section~\ref{sec:extr-quas-forms} for the case of depth
$4$.
\section{Basics}\label{sec:basics}
In this section we collect some basic facts about modular and quasimodular
forms and give a short exposition of the Frobenius ansatz method to solve
linear differential equations.
\subsection{Modular forms}\label{sec:modular-forms}
The modular group $\Gamma$ is the group of $2\times2$-matrices with integer
entries and determinant $1$
\begin{equation*}
  \Gamma=\mathrm{PSL}(2,\mathbb{Z})=\left\{
    \begin{pmatrix}
      a&b\\c&d
    \end{pmatrix}\Bigm| a,b,c,d\in\mathbb{Z}, ac-bd=1
  \right\}/\{\pm I\}.
\end{equation*}
The group $\Gamma$ is generated by
\begin{equation}
  \label{eq:ST}
  Sz=-\frac1z\quad Tz=z+1,
\end{equation}
which satisfy the relations $S^2=\id$ and $(ST)^3=\id$.  It acts on the upper
half plane $\mathbb{H}=\{z\in\mathbb{C}\mid\Im z>0\}$ by M\"obius
transformation
\begin{equation*}
  \begin{pmatrix}
      a&b\\c&d
    \end{pmatrix}z=\frac{az+b}{cz+d}.
  \end{equation*}
A holomorphic function $f:\mathbb{H}\to\mathbb{C}$ is called a \emph{weakly
  holomorphic modular form of weight $w$}, if it satisfies
\begin{equation}
  \label{eq:modular}
  (cz+d)^{-w}f\left(\frac{az+b}{cz+d}\right)=f(z)
\end{equation}
for all $z\in\mathbb{H}$ and all $\bigl( \begin{smallmatrix}a & b\\ c &
  d\end{smallmatrix}\bigr)\in\Gamma$. 

The vector space of weakly holomorphic modular
forms is denoted by $\M_w^!(\Gamma)$. This space is non-trivial only
for even values of $w$. A form $f$ is called holomorphic, if
\begin{equation*}
  f(i\infty):=\lim_{\Im z\to+\infty}f(z)
\end{equation*}
exists. The subspace $\M_w(\Gamma)$ of holomorphic modular forms is
non-trivial only for even $w\geq4$. Its dimension equals
\begin{equation*}
  \dim\M_w(\Gamma)=
  \begin{cases}
    \left\lfloor\frac w{12}\right\rfloor&\text{for }w\equiv 2\pmod{12}\\
    \left\lfloor\frac w{12}\right\rfloor+1&\text{otherwise.}
  \end{cases}
\end{equation*}
The most prominent examples of modular forms are the Eisenstein series
\begin{equation}
  \label{eq:eisenstein-2k}
  E_{2k}(z)=\frac1{2\zeta(2k)}\sum\limits_{(m,n)\in\mathbb{Z}\setminus\{(0,0)\}}
  \frac1{(mz+n)^{2k}}
\end{equation}
for $k\geq2$, which are modular forms of weight $2k$. They admit a Fourier
expansion (setting $q=e^{2\pi iz}$ as usual in this context)
\begin{equation}\label{eq:eisenstein-fourier}
  E_{2k}=1-\frac{4k}{B_{2k}}\sum_{n=1}^\infty\sigma_{2k-1}(n)q^n,
\end{equation}
where $\sigma_{2k-1}(n)=\sum_{d\mid n}d^{2k-1}$ denotes the divisor sum of
order $2k-1$ and $B_{2k}$ denote the Bernoulli numbers. The defining series
\eqref{eq:eisenstein-2k} does not converge for $k=1$ in the given
form. Nevertheless, the series \eqref{eq:eisenstein-fourier} converges for
$k\geq1$.  This entails a slightly more complicated transformation behavior
under the action of $S$
\begin{equation}
  \label{eq:E2S}
  z^{-2}E_2(Sz)=E_2(z)+\frac6{\pi iz}.
\end{equation}
Every modular form can be expressed as a polynomial in $E_4$ and $E_6$.
Furthermore, by the invariance under $T$, every holomorphic modular form $f$
has a Fourier expansion
\begin{equation*}
  f(z)=\sum_{n=0}^\infty a_f(n)e^{2\pi nz}=\sum_{n=0}^\infty a_f(n)q^n.
\end{equation*}
In the sequel we will follow the convention to freely switch between dependence
on $z$ and $q$.

A holomorphic form $f$ is called a \emph{cusp form}, if $f(i\infty)=0$. The
prototypical example of a cusp form is
\begin{equation}
  \label{eq:Delta}
  \Delta=\frac1{1728}\left(E_4^3-E_6^2\right).
\end{equation}
The space of cusp forms is denoted by $\mathcal{S}_w(\Gamma)$. Since we only
deal with modular forms for the full modular group $\Gamma$, we will omit
reference to the group in the sequel.

For a detailed introduction to the theory of modular forms we refer to
\cite{Shimura2012:modular_forms_basics,
  Berndt_Knopp2008:heckes_theory_modular,
  Bruinier_Geer_Harder+2008:1_2_3-modular,
  Stein2007:modular_forms_computational,
  Diamond_Shurman2005:first_course_modular,
  Iwaniec1997:topics_classical_automorphic,
  Lang1995:introduction_to_modular}.

\subsection{Quasimodular forms}\label{sec:quasimodular-forms}
The vector space of quasimodular forms of weight $w$ and depth $\leq r$ is
given by
\begin{equation}
  \label{eq:quasi-space}
  \QM_w^{r}=\bigoplus_{\ell=0}^rE_2^\ell\M_{w-2\ell}.
\end{equation}
Quasimodular forms occur naturally as derivatives of modular forms (see
\cite{Royer2012:quasimodular_forms_introduction,
  Zagier2008:elliptic_modular_forms,Choie_Lee2019:jacobi_like_forms}).

The dimension of the space $\QM_w^r$ will play an important role in this paper,
so we give a general formula.  In the following the notation $[P]$ means $1$,
if the condition $P$ is satisfied and $0$ otherwise.

\begin{proposition}\label{prop:dim}
  Let $w\equiv0\pmod2$ and $r\geq0$, then the dimension of the space of
  quasimodular forms equals
  \begin{equation}\label{eq:dim-qm-gen}
    \begin{split}
      \dim\QM_w^r&=\left\lfloor\frac{w(r+1)}{12}\right\rfloor-
      \left\lfloor\frac{r+1}6\right\rfloor
      \left(r-3\left\lfloor\frac{r+1}6\right\rfloor-1\right)+
      \left\lfloor\frac{r}6\right\rfloor\\
      &+1-\left[w(r+1)\equiv2\!\!\!\!\pmod{12}\right].
    \end{split}
\end{equation}
In the special case $r\leq4$ and $w(r+1)\equiv0\pmod{12}$ this simplifies to
\begin{equation}
  \label{eq:dim-qm}
  \dim\QM_w^r=\frac{w(r+1)}{12}+1.
\end{equation}
\end{proposition}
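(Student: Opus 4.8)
The plan is to reduce the claim to the evaluation of a finite sum of dimensions of spaces of genuine modular forms, and then to carry out the resulting floor-function bookkeeping. Since the definition \eqref{eq:quasi-space} presents $\QM_w^r$ as an internal direct sum $\bigoplus_{\ell=0}^r E_2^\ell\M_{w-2\ell}$, the summands being independent because $E_2,E_4,E_6$ are algebraically independent, one immediately obtains
\[
  \dim\QM_w^r=\sum_{\ell=0}^r\dim\M_{w-2\ell}.
\]
Thus the entire content of the proposition lies in summing the known dimensions $\dim\M_m$ over the arithmetic progression of weights $w,w-2,\dots,w-2r$.

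I would then substitute the uniform expression $\dim\M_m=\lfloor m/12\rfloor+1-[m\equiv2\pmod{12}]$, which is valid for even $m\geq0$; I therefore work where every summand has nonnegative weight (that is, $w\geq2r$), so the formula applies termwise, a range that covers the cases of interest. This splits the sum into three pieces: the constant contribution $r+1$; the indicator count $N:=\#\{0\le\ell\le r: w-2\ell\equiv2\pmod{12}\}$; and the floor sum $\Sigma:=\sum_{\ell=0}^r\lfloor(w-2\ell)/12\rfloor$. Because $w$ is even, the congruence $w-2\ell\equiv2\pmod{12}$ is equivalent to a single residue condition on $\ell$ modulo $6$, so $N$ is the number of integers in $\{0,\dots,r\}$ lying in one fixed class mod $6$; expressing this count through $\lfloor(r+1)/6\rfloor$ and $\lfloor r/6\rfloor$ and combining with the indicator on $w(r+1)$ accounts for the terms $+\lfloor r/6\rfloor$ and $-[w(r+1)\equiv2\pmod{12}]$ of the claimed formula.

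The main obstacle is the floor sum $\Sigma$. Here I would exploit the period-$6$ structure: over any six consecutive indices $\ell$ the argument $w-2\ell$ decreases by exactly $12$ and runs once through all even residues modulo $12$, so each complete block of six contributes a fixed, easily computed amount. Writing $r+1=6\lfloor(r+1)/6\rfloor+s$ with $0\le s<6$ and summing the complete blocks plus the remainder produces the leading term $\lfloor w(r+1)/12\rfloor$ together with the quadratic correction $-\lfloor\frac{r+1}6\rfloor\bigl(r-3\lfloor\frac{r+1}6\rfloor-1\bigr)$. This residue-tracking is the delicate, error-prone part of the argument; the generating function identity $\sum_w\dim\QM_w^r\,t^w=(1-t^{2r+2})/\bigl((1-t^2)(1-t^4)(1-t^6)\bigr)$ explains conceptually why a quasi-polynomial of period $12$ arises and serves as an independent check.

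Finally, the special case is an immediate corollary. For $r\le4$ we have $r+1\le5<6$, whence $\lfloor(r+1)/6\rfloor=\lfloor r/6\rfloor=0$, so both the quadratic correction and the $\lfloor r/6\rfloor$ term vanish. If in addition $w(r+1)\equiv0\pmod{12}$, then $\lfloor w(r+1)/12\rfloor=w(r+1)/12$ and the indicator $[w(r+1)\equiv2\pmod{12}]$ is zero, so \eqref{eq:dim-qm-gen} collapses to $\dim\QM_w^r=w(r+1)/12+1$, as claimed.
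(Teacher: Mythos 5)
Your proposal is correct and takes essentially the same route as the paper: both start from $\dim\QM_w^r=\sum_{\ell=0}^r\dim\M_{w-2\ell}$ and evaluate the sum by splitting $\{0,\dots,r\}$ into blocks of length $6$, the paper organizing the bookkeeping as a recursion reducing $r$ modulo $6$ with tabulated base cases $r=0,\dots,5$, while you substitute the closed form for $\dim\M_m$ termwise --- a cosmetic difference. Your explicit restriction to $w\geq 2r$, so that the dimension formula for $\M_{w-2\ell}$ applies termwise, is left implicit in the paper but is genuinely needed: for instance $\dim\QM_2^6=1$ while \eqref{eq:dim-qm-gen} evaluates to $0$, so both arguments establish the proposition only in that range.
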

\begin{proof}
  We start with
  \begin{equation*}
  \dim\QM_w^r=\sum_{k=0}^r\dim\M_{w-2k}
\end{equation*}
and split the range of summation in intervals of length $6$ to obtain
\begin{equation}\label{eq:dim-qm-prelim}
  \dim\QM_w^r
  =\frac w2\left\lfloor\frac{r}6\right\rfloor+\dim\QM_w^{r\!\!\!\!\!\pmod6}-
  \left\lfloor\frac{r}6\right\rfloor\left(r-3\left\lfloor\frac{r}6\right\rfloor
    -2\right).
\end{equation}
The values of
the dimension for $r=0,\ldots,5$ are given by
\begin{align*}
  \dim\QM_w^0&=\left\lfloor\frac w{12}\right\rfloor+1-
  \left[w\equiv2\!\!\!\!\pmod{12}\right]\\
  \dim\QM_w^1&=\left\lfloor\frac w{6}\right\rfloor+1\\
  \dim\QM_w^2&=\left\lfloor\frac w{4}\right\rfloor+1\\
  \dim\QM_w^3&=\left\lfloor\frac w{3}\right\rfloor+1\\
  \dim\QM_w^4&=\left\lfloor\frac {5w}{12}\right\rfloor+1-
  \left[w\equiv10\!\!\!\!\pmod{12}\right]\\
  \dim\QM_w^5&=\frac w2.
\end{align*}

 From \eqref{eq:dim-qm-prelim} and the dimension formulas for $r=0,\ldots,5$
 the general formula \eqref{eq:dim-qm-gen} can be obtained by a case
 distinction $r\pmod6$ and $w\pmod{12}$.
\end{proof}

We will follow the convention to denote the derivative by
\begin{equation*}
  f'=\frac1{2\pi i}\frac{df}{dz}=q\frac{df}{dq}.
\end{equation*}
With this notation Ramanujan's identities read
\begin{equation}
  \label{eq:ramanujan}
  \begin{split}
    E_2'&=\frac1{12}\left(E_2^2-E_4\right)\\
    E_4'&=\frac13\left(E_2E_4-E_6\right)\\
    E_6'&=\frac12\left(E_2E_6-E_4^2\right).
  \end{split}
\end{equation}
These give rise to the definition of the Serre derivative
\begin{equation*}
  \partial_wf=f'-\frac w{12}E_2f,
\end{equation*}
where $w$ is (related to) the weight of $f$. We will use the product rule
\begin{equation*}
  \partial_{w_1+w_2}(fg)=\left(\partial_{w_1}f\right)g+
  f\left(\partial_{w_2}g\right)
\end{equation*}
and also make frequent use of the following immediate consequences of
\eqref{eq:ramanujan}
\begin{equation}
  \label{eq:serre-ramanujan}
  \begin{split}
    \partial_1E_2&=-\frac1{12}E_4\\
    \partial_4E_4&=-\frac13E_6\\
    \partial_6E_6&=-\frac12E_4^2\\
    \partial_{12}\Delta&=0.
  \end{split}
\end{equation}
From the second and third equation together with the fact that every
holomorphic form is a polynomial in $E_4$ and $E_6$, it follows immediately that
for a form $f\in\M_w$ we have $\partial_wf\in\M_{w+2}$, and for
$f\in\mathcal{S}_w$ we have $\partial_wf\in\mathcal{S}_{w+2}$.

The following lemma is a special case of
\cite[Proposition~3.3]{Kaneko_Koike2006:extremal_quasimodular_forms}, where a
similar result for a certain family of operators $\theta_k^{(r)}$ of order $r$
is proved.
\begin{lemma}\label{lem:quasi-serre}
  The Serre derivative $\partial_{w-r}$ maps quasimodular forms of weight $w$
  and depth $\leq r$ to quasimodular forms of weight $w+2$ and depth $\leq r$.
\end{lemma}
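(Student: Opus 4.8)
The plan is to reduce to a single generator of $\QM_w^r$ and then to compute directly. By the defining decomposition \eqref{eq:quasi-space} and the linearity of $\partial_{w-r}$, it suffices to treat a form of the shape $f=E_2^\ell g$ with $g\in\M_{w-2\ell}$ and $0\leq\ell\leq r$, and to show that $\partial_{w-r}(E_2^\ell g)\in\QM_{w+2}^r$.

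First I would expand $\partial_{w-r}(E_2^\ell g)=(E_2^\ell g)'-\frac{w-r}{12}E_2^{\ell+1}g$ via the product rule together with the first Ramanujan identity $E_2'=\frac1{12}(E_2^2-E_4)$ from \eqref{eq:ramanujan}. To keep the underlying modular forms visible I would rewrite the derivative of $g$ through its own Serre derivative, $g'=\partial_{w-2\ell}g+\frac{w-2\ell}{12}E_2g$, where $\partial_{w-2\ell}g\in\M_{w-2\ell+2}$ by the consequence of \eqref{eq:serre-ramanujan} noted in the text. Collecting the terms proportional to $E_2^{\ell+1}g$ then yields an identity of the form
\begin{equation*}
  \partial_{w-r}(E_2^\ell g)=\frac{r-\ell}{12}E_2^{\ell+1}g
  -\frac{\ell}{12}E_2^{\ell-1}E_4g+E_2^\ell\partial_{w-2\ell}g.
\end{equation*}

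The content of the lemma now becomes transparent, and this is the step I regard as the crux. Each of the three summands is visibly a power of $E_2$ times a holomorphic modular form of the correct weight: the last term has $\partial_{w-2\ell}g\in\M_{w-2\ell+2}$ and depth $\ell\leq r$; the middle term uses $E_4g\in\M_{w-2\ell+4}$, has depth $\ell-1$, and carries the coefficient $\frac{\ell}{12}$, which vanishes at $\ell=0$ so that no spurious negative power of $E_2$ survives; the first term has depth $\ell+1$, which is the only threat to the depth bound since it could reach $r+1$. But its coefficient $\frac{r-\ell}{12}$ vanishes precisely when $\ell=r$, so whenever the $E_2^{\ell+1}g$ term is actually present one has $\ell\leq r-1$ and hence depth $\ell+1\leq r$. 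This is exactly where the choice of subscript $w-r$ in the Serre derivative is essential: any other subscript would leave a nonzero multiple of $E_2^{r+1}g$ in the extremal case and push the depth up to $r+1$.

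Assembling the three pieces shows $\partial_{w-r}(E_2^\ell g)\in\QM_{w+2}^r$, and summing over $\ell$ gives the claim for a general $f\in\QM_w^r$. I do not anticipate any genuine obstacle beyond bookkeeping; the only subtlety worth flagging is the boundary behaviour at $\ell=0$ and $\ell=r$, which the vanishing coefficients $\frac{\ell}{12}$ and $\frac{r-\ell}{12}$ take care of automatically.
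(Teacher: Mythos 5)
Your proof is correct and is essentially the paper's own argument: the paper writes $f=\sum_{k=0}^r A_{w-2k}E_2^k$, applies the product rule for the Serre derivative, and inserts $\partial_{2k-r}E_2^k=-\frac{k}{12}E_4E_2^{k-1}+\frac{r-k}{12}E_2^{k+1}$, which is exactly your monomial identity, with the depth bound secured by the vanishing of the coefficient $\frac{r-k}{12}$ at $k=r$ just as you observe. Your term-by-term reduction via linearity versus the paper's summed presentation is only a cosmetic difference.
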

\begin{proof}
  A quasimodular form of weight $w$ and depth $\leq r$ can be written as
  \begin{equation}
    \label{eq:quasi-r}
    f_w=\sum_{k=0}^rA_{w-2k}E_2^k
  \end{equation}
  with $A_{w-2k}\in\M_{w-2k}$.
  Then
  \begin{equation*}
    \partial_{w-r}f_w=\sum_{k=0}^r\left(\left(\partial_{w-2k}A_{w-2k}\right)E_2^k+
    A_{w-2k}\left(\partial_{2k-r}E_2^k\right)\right).
\end{equation*}
Inserting
\begin{equation*}
  \partial_{2k-r}E_2^k=-\frac k{12}E_4E_2^{k-1}+\frac{r-k}{12}E_2^{k+1}
\end{equation*}
yields
\begin{align*}
  \partial_{w-r}f_w&=\partial_wA_w-\frac1{12}E_4A_{w-2}\\
  &+  \sum_{k=1}^{r-1}\left(\partial_{w-2k} A_{w-2k}-\frac{k+1}{12}E_4A_{w-2k-2}
    +\frac{r-k+1}{12}A_{w-2k+2}\right)E_2^k\\
  &+  \left(\partial_{w-2r}A_{w-2r}+\frac1{12}A_{w-2r+2}\right)E_2^r,
\end{align*}
which is a quasimodular form of weight $w+2$ and depth $\leq r$.
\end{proof}

\begin{lemma}\label{lem:serre}
  Let $f:\HH\to\C$ be holomorphic. Then
  \begin{equation*}
    \partial_w\left(z^{-w}f(Sz)\right)=z^{-w-2}\left(\partial_wf\right)(Sz).
  \end{equation*}
\end{lemma}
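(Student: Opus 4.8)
The plan is to prove this identity by direct computation, expanding both sides using the definition of the Serre derivative and the transformation behavior of $E_2$ under $S$. The key observation is that the Serre derivative is built to compensate exactly for the non-modularity of $E_2$, so the anomalous term in \eqref{eq:E2S} should cancel against the anomalous term coming from differentiating the factor $z^{-w}$.

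First I would set $g(z) = z^{-w} f(Sz)$ and compute $g'(z) = \frac{1}{2\pi i}\frac{dg}{dz}$ using the product and chain rules. Differentiating $z^{-w}$ produces a term proportional to $z^{-w-1} f(Sz)$, while differentiating $f(Sz)$ brings down a factor of $\frac{d}{dz}(-1/z) = 1/z^2$, giving a term $z^{-w-2}(f')(Sz)$ after accounting for the $\frac{1}{2\pi i}$ normalization. Concretely, since $\frac{1}{2\pi i}\frac{d}{dz}(Sz) = \frac{1}{2\pi i}\cdot\frac{1}{z^2}$, the chain rule yields $\frac{1}{2\pi i}\frac{d}{dz}f(Sz) = z^{-2}(f')(Sz)$. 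The precise bookkeeping of the $z^{-w-1}$ term (with its weight factor $-\frac{w}{2\pi i}$) is what must later be matched against the correction term.

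Next I would compute $\frac{w}{12}E_2(z) g(z)$, the subtracted part of $\partial_w g$. Here I substitute the transformation law \eqref{eq:E2S} in the form $E_2(z) = z^{-2}E_2(Sz) - \frac{6}{\pi i z}$. Multiplying by $\frac{w}{12}g(z) = \frac{w}{12}z^{-w}f(Sz)$, this splits into a genuinely modular-looking piece $\frac{w}{12}z^{-w-2}E_2(Sz)f(Sz)$ and an anomalous piece carrying the $\frac{1}{z}$ factor. Assembling $\partial_w g = g' - \frac{w}{12}E_2 g$ and comparing the two anomalous terms — the one from differentiating $z^{-w}$ and the one from the correction in \eqref{eq:E2S} — I expect them to cancel identically. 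What remains should be exactly $z^{-w-2}\bigl[(f')(Sz) - \frac{w}{12}E_2(Sz)f(Sz)\bigr] = z^{-w-2}(\partial_w f)(Sz)$, which is the desired right-hand side.

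The main obstacle, such as it is, lies entirely in the careful tracking of the factors of $2\pi i$, $z$, and the sign conventions so that the two anomalous terms truly match. The heart of the argument is confirming that the coefficient $-\frac{w}{2\pi i}z^{-w-1}f(Sz)$ produced by differentiating $z^{-w}$ coincides with $\frac{w}{12}\cdot\frac{6}{\pi i z}\cdot z^{-w}f(Sz) = \frac{w}{2\pi i}z^{-w-1}f(Sz)$ coming from the correction term in \eqref{eq:E2S}; these are equal and opposite, so they cancel. Once this single numerical coincidence is verified, the identity follows with no further difficulty.
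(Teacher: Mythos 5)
Your proposal is correct and follows essentially the same route as the paper's proof: a direct expansion of $\partial_w\left(z^{-w}f(Sz)\right)$ via the product and chain rules, followed by substituting the transformation law \eqref{eq:E2S} so that the term $-\frac{w}{2\pi i}z^{-w-1}f(Sz)$ cancels against $\frac{w}{12}\cdot\frac{6}{\pi i z}\cdot z^{-w}f(Sz)$. The single numerical cancellation you identify is exactly the computation carried out in the paper, so nothing is missing.
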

\begin{proof}
  We compute
  \begin{align*}
    &\partial_w\left(z^{-w}f(Sz)\right)
    =z^{-w-2}f'(Sz)-\frac{w}{2\pi i}z^{-w-1}f(Sz)
    -\frac{w}{12}E_2(z)z^{-w}f(Sz)\\
    = &z^{-w-2}f'(Sz)-\frac{w}{2\pi i}z^{-w-1}f(Sz)
    -\frac{w}{12}\left(z^{-2}E_2(Sz)-
      \frac{12}{2\pi iz}\right)z^{-w}f(Sz)\\
    = &z^{-w-2}\left(f'(Sz)-\frac{w}{12}E_2(Sz)f(Sz)\right)=
    z^{-w-2}\left(\partial_wf\right)(Sz).
  \end{align*}
\end{proof}

We will use the following convention for iterated Serre derivatives throughout
the paper:
\begin{equation*}
  \partial_w^0f=f,\quad
  \partial_w^{k+1}=\partial_{w+2k}\left(\partial_w^kf\right).
\end{equation*}

We will consider  differential equations of the form
\begin{equation}
  \label{eq:diff-r}
    K_{\mathbf{B}}f=B_{m}\partial_{w-r}^{r+1}f+
    B_{m+2}\partial_{w-r}^{r}f+\cdots+B_{m+2r+2}f=0,
\end{equation}
where $\mathbf{B}=(B_{m},\ldots,B_{m+2r+2})$ are modular forms of respective
weights $m,m+2,\ldots,m+2r+2$ with $B_m(i\infty)=1$.
\begin{lemma}\label{lem:invar}
  For every holomorphic solution $f:\HH\to\C$ of the differential equation
  \eqref{eq:diff-r}, $f(Tz)$ and $z^{r-w}f(Sz)$ are also solutions. Thus, for
  any $\gamma=\begin{pmatrix}a&b\\c&d\end{pmatrix}\in\Gamma$,
  $(cz+d)^{r-w}f(\gamma z)$ is also a solution.
\end{lemma}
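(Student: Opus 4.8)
The plan is to reduce the whole statement to the two generators $S$ and $T$ of $\Gamma$. The transformation $f\mapsto (cz+d)^{r-w}f(\gamma z)$ is a genuine right action of $\Gamma$, because the automorphy factors $j(\gamma,z)=cz+d$ obey the cocycle relation $j(\gamma_1\gamma_2,z)=j(\gamma_1,\gamma_2 z)\,j(\gamma_2,z)$; composing the transformation for $\gamma_1$ with that for $\gamma_2$ therefore reproduces the one for $\gamma_1\gamma_2$. Hence, once the solution space is shown to be invariant under the two maps $f\mapsto f(Tz)$ and $f\mapsto z^{r-w}f(Sz)$, invariance under $(cz+d)^{r-w}f(\gamma z)$ for every $\gamma\in\Gamma$ follows automatically, and it suffices to treat these two cases. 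The $T$-case is immediate: the coefficients $B_{m+2j}$ and the series $E_2$ occurring in the Serre derivative are all $1$-periodic, and $\tfrac{d}{dz}$ is invariant under $z\mapsto z+1$, so each $\partial_{w-r}$ — and therefore the full operator $K_{\mathbf{B}}$ — commutes with the translation $z\mapsto z+1$. Consequently, if $f$ solves \eqref{eq:diff-r}, then $K_{\mathbf{B}}\!\left(f(T\,\cdot)\right)(z)=(K_{\mathbf{B}}f)(Tz)=0$, i.e.\ $f(Tz)$ is again a solution.

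The $S$-case is the heart of the argument, and it is where Lemma~\ref{lem:serre} does the work. First I would establish, by induction on $j$, the covariance
\begin{equation*}
  \partial_{w-r}^{\,j}\bigl(z^{r-w}f(Sz)\bigr)
  =z^{-(w-r+2j)}\bigl(\partial_{w-r}^{\,j}f\bigr)(Sz).
\end{equation*}
The base case $j=0$ is the identity $z^{r-w}f(Sz)=z^{-(w-r)}f(Sz)$, and the inductive step is a single application of Lemma~\ref{lem:serre}, used with weight $w-r+2j$ applied to the function $\partial_{w-r}^{\,j}f$, followed by the definition $\partial_{w-r}^{\,j+1}=\partial_{(w-r)+2j}(\partial_{w-r}^{\,j}f)$ of the iterated Serre derivative. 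With this covariance in hand I would then multiply the $j$-th term by its coefficient $B_{m+2(r+1-j)}$ and invoke the modularity of $B_{m+2(r+1-j)}$, in the form $B(z)=z^{-k}B(Sz)$ for a form $B$ of weight $k$, to extract a power of $z$ from each summand.

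The decisive bookkeeping observation is that the exponents line up: the weight $w-r+2j$ of $\partial_{w-r}^{\,j}f$ and the weight $m+2(r+1-j)$ of its coefficient sum to $m+w+r+2$, \emph{independently} of $j$. Every summand therefore picks up the \emph{same} factor $z^{-(m+w+r+2)}$, so that
\begin{equation*}
  K_{\mathbf{B}}\bigl(z^{r-w}f(Sz)\bigr)
  =z^{-(m+w+r+2)}\,(K_{\mathbf{B}}f)(Sz)=0,
\end{equation*}
showing that $z^{r-w}f(Sz)$ solves \eqref{eq:diff-r}. I do not expect any individual computation to be an obstacle; the only real difficulty is keeping the weight indices consistent throughout the induction and the summation. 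Once the constancy of the total weight $m+w+r+2$ is verified, the uniform factoring-out of the powers of $z$ is exactly what makes the full operator $K_{\mathbf{B}}$ slash-covariant, and the reduction to the generators $S$ and $T$ then delivers the general $\gamma$-statement.
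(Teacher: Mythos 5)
Your proposal is correct and follows essentially the same route as the paper's proof: the $T$-case via $T$-invariance of the coefficients and Serre derivatives, the $S$-case via Lemma~\ref{lem:serre}, and the general $\gamma$ from the fact that $S$ and $T$ generate $\Gamma$. Your induction giving $\partial_{w-r}^{\,j}\bigl(z^{r-w}f(Sz)\bigr)=z^{-(w-r+2j)}\bigl(\partial_{w-r}^{\,j}f\bigr)(Sz)$ and the weight bookkeeping yielding the uniform factor $z^{-(m+w+r+2)}$ are precisely the computation the paper compresses into the single identity $K_{\mathbf{B}}(z^{r-w}f(Sz))=z^{-r-w-m-2}K_{\mathbf{B}}(f)(Sz)$.
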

\begin{proof}
  Let $f:\HH\to\C$ be a holomorphic solution of \eqref{eq:diff-r}. Then by
  Lemma~\ref{lem:serre} we have
  \begin{equation*}
    K_{\mathbf{B}}(z^{r-w}f(Sz))=z^{-r-w-m-2}K_{\mathbf{B}}(f)(Sz)=0.
  \end{equation*}
  Similarly, since all coefficient functions and all Serre derivatives are
  invariant under $T$, we have
  \begin{equation*}
    K_{\mathbf{B}}(f(Tz))=K_{\mathbf{B}}(f)(Tz)=0.
  \end{equation*}
  The last assertion follows from the fact that $S$ and $T$ generate $\Gamma$.
\end{proof}
We will be mostly interested in the case that $m=0$ and thus $B_m=1$, in which
we call the corresponding equation \emph{normalized}.

\subsection{The Frobenius ansatz method}\label{sec:frob-ansatz-meth}
F.~G.~Frobenius \cite{Frobenius1873:uber_integration_linearen} devised a method
to find holomorphic (=power series) solutions of differential equations of the
form
\begin{equation}\label{eq:frobenius}
  f^{(n)}(z)+a_{n-1}(z)f^{(n-1)}(z)+\cdots+a_0(z)f(z)=0,
\end{equation}
where $a_0,\ldots,a_{n-1}$ are meromorphic functions on some
$U\subset\C$. Under the condition that $a_k$ has a pole of order at most $n-k$
at $z_0\in U$ (this is called a \emph{regular singularity} in this context),
there exists a solution of \eqref{eq:frobenius} of the form
\begin{equation*}
  f(z)=(z-z_0)^\lambda\sum_{n=0}^\infty f_n(z-z_0)^n,
\end{equation*}
where $\lambda$ is a solution of the so called indicial equation, a polynomial
equation arising from inserting this ansatz into \eqref{eq:frobenius} and
requiring $f_0\neq0$. Originally, the method was developed for equations of
degree $n=2$.

In our case the situation is slightly different, since we are interested in
power series in $q=e^{2\pi iz}$, the derivatives still being with respect to
$z$. Furthermore, we have expressed all derivatives in terms of Serre
derivatives. We are looking for solutions of \eqref{eq:diff-r} of the form
\begin{equation*}
  q^\lambda\sum_{n=0}^\infty a(n)q^n.
\end{equation*}
For such a solution to exist $\lambda$ has to be a root of the \emph{indicial
  equation}
\begin{equation}
  \label{eq:indicial}
  p_{\mathbf{B}}(x)=\sum_{\ell=0}^{r+1}B_{m+2\ell}(i\infty)q_{r+1-\ell}(x,w)=0,
\end{equation}
where $q_0(x,w)=1$ and
\begin{equation*}
  q_\ell(x,w)=\left(x-\frac{w-r}{12}\right)\left(x-\frac{w-r+2}{12}\right)
  \cdots\left(x-\frac{w-r+2\ell-2}{12}\right).
\end{equation*}
Then $p_{\mathbf{B}}(x)$ is a polynomial of degree $r+1$ with roots
$\lambda_0,\ldots,\lambda_r\in\C$ called the \emph{Frobenius exponents} of
\eqref{eq:diff-r}. As long as these exponents are pairwise different and none
of the pairwise differences $\lambda_k-\lambda_\ell$ ($k\neq\ell$) is an
integer, the ansatz method immediately gives $r+1$ linearly independent
solutions of \eqref{eq:diff-r} by successively solving for
$a(1),a(2),\ldots$. In our case we are especially interested in the opposite
situation, namely that all exponents are positive integers and thus all the
pairwise differences are integers. In the classical situation this is the case,
where the monodromy representation associated to a fundamental system of
solutions is  not diagonisable in general.

We order the exponents in decreasing order
$\lambda_0\geq\lambda_1\geq\cdots\geq\lambda_r\geq0$. Then we start with the
solution
\begin{equation*}
  f_0(z)=q^{\lambda_0}\sum_{n=0}^\infty a_0(n)q^n.
\end{equation*}
A second solution can then be found using the ansatz
\begin{equation*}
  f_1(z)=Czf_0(z)+q^{\lambda_1}\sum_{n=0}^\infty a_1(n)q^n,
\end{equation*}
where $C$ has to be chosen so that the computation of the coefficient
$a_1(\lambda_0-\lambda_1)$ is possible. Further solutions can be found by
making an ansatz
\begin{equation*}
  f_\ell(z)=C_\ell^{(\ell)} z^\ell f_0(z)+
  C_{\ell-1}^{(\ell)}z^{\ell-1}q^{\lambda_1}\sum_{n=0}^\infty a_1(n)q^n+
  \cdots+q^{\lambda_\ell}\sum_{n=0}^\infty a_\ell(n)q^n,
\end{equation*}
where the constants $C_\ell^{(\ell)},\ldots,C_1^{(\ell)}$ have to be chosen so
that the computation of the coefficients
$a_\ell(\lambda_{\ell-1}-\lambda_\ell),\ldots,a_\ell(\lambda_0-\lambda_\ell)$
is possible. For more details on the method we refer to
\cite{Henrici1977:applied_computational_complex,
  Hille1976:ordinary_differential_equations,
  Teschl2012:ordinary_differential_equations}.
\section{Quasimodular vectors}\label{sec:quasi-modul-vect}
In this section we will use the transformation behavior of quasimodular forms
to define vector valued functions that encode this transformation
behavior. This has some analogy to vector valued modular forms as studied in
\cite{Franc_Mason2016:hypergeometric_series_modular,
  Franc_Mason2014:fourier_coefficients_vector,
  Knopp_Mason2012:logarithmic_vector_valued,
  Mason2012:fourier_coefficients_vector,
  Knopp_Mason2011:logarithmic_vector_valued,
  Marks_Mason2010:structure_module_vector,
  Mason2007:vector_valued_modular,Knopp_Mason2003:vector_valued_modular}, but
also exhibits some differences.

Let $f$ be a holomorphic quasimodular form of weight $w$ and depth $s$. Then
$f$ can be written as
\begin{equation}\label{eq:quasif}
    f(z)=\sum_{\ell=0}^sE_2(z)^\ell h_\ell(z),
\end{equation}
where $h_\ell$ ($\ell=0,\ldots,s$) are modular forms of weight
$w-2\ell$. Define quasimodular forms $g_\ell$ ($\ell=0,\ldots,s$) of weight
$w-2\ell$ and depth $s-\ell$ by
\begin{equation}\label{eq:quasig}
    \binom s\ell g_\ell(z)=\left(\frac6{\pi i}\right)^\ell
    \sum_{m=0}^{s-\ell}\binom{\ell+m}mE_2(z)^mh_{\ell+m}(z);
\end{equation}
notice that $f=g_0$.  Then we have
\begin{equation}
  \label{eq:quasiST}
  f(Tz)=f(z)\quad\text{and }
  z^{-w}f(Sz)= \sum_{\ell=0}^s\binom s\ell \frac1{z^\ell}g_\ell(z),
\end{equation}
which follows from \eqref{eq:E2S}.

\begin{definition}\label{def:quasi-vec}
Let $f$ be a quasimodular form of weight $w$ and depth $s\leq r$ given by
\eqref{eq:quasif}. Let then the forms $g_\ell$ ($\ell=0,\ldots,s$) be given  by
\eqref{eq:quasig}. Use these to define
  \begin{equation*}
    f_k(z)=\sum_{\ell=0}^{\min(k,s)}\binom k\ell z^{k-\ell} g_{\ell}(z)
  \end{equation*}
  for $k=0,\ldots,r$.  A holomorphic vector valued
  function $\vec{F}:\HH\to\C^{r+1}$ is called quasimodular, if there is a
  quasimodular form $f$ such that $\vec{F}$ is given by
  \begin{equation*}
    \vec{F}(z)=(f_0(z),f_1(z),\ldots,f_r(z))^\trans.
  \end{equation*}
  If $s<r$, we call $\vec{F}$ \emph{degenerate}.
\end{definition}
\begin{proposition}\label{prop:transf}
  Let $\vec{F}=(f_0,\ldots,f_r)^\trans$ be a holomorphic vector function on
  $\HH$.  Then $\vec{F}$ is quasimodular, if and only if it has the following
  behavior under the generators $S$ and $T$ of $\Gamma$:
  \begin{align}\label{eq:F-transS}
  z^{r-w}\vec{F}(Sz)&=\rho(S)\vec{F}(z)\\
  \vec{F}(Tz)&=\rho(T)\vec{F}(z)\label{eq:F-transT}
\end{align}
with
\begin{equation}\label{eq:rhoS}
  \rho(S)=
  \begin{pmatrix}
    0&0&\ldots&0&1\\
    0&0&\ldots&-1&0\\
    0&0&\iddots&0&0\\
    0&\iddots&&0&0\\
    (-1)^r&0&\ldots&0&0
  \end{pmatrix}
\end{equation}
and
\begin{equation}\label{eq:rhoT}
  \rho(T)=
  \begin{pmatrix}
    1&0&0&\ldots&0&0\\
    1&1&0&\ldots&0&0\\
    1&2&1&\ldots&0&0\\
    1&3&3&\ddots&0&0\\
    \ldots&\ldots&\ldots&\ldots&\ddots&0\\
    1&\binom r1&\binom r2&\ldots&\binom r{r-1}&1
  \end{pmatrix}.
\end{equation}
\end{proposition}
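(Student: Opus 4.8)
The plan is to reduce both transformation laws to statements about the component forms $g_\ell$, organised through the lower–triangular matrix $M(z)$ with entries $M(z)_{k\ell}=\binom{k}{\ell}z^{k-\ell}$. The key structural fact is that $M$ is a one–parameter group, $M(z)M(z')=M(z+z')$, with $M(1)=\rho(T)$, and that by the very definition of the $f_k$ one has $\vec F=M(z)\vec g$ with $\vec g=(g_0,\dots,g_r)^\trans$. First I would isolate the two elementary binomial identities that drive everything: the subset–of–subset relation $\binom{s}{\ell}\binom{s-\ell}{n}=\binom{s}{\ell+n}\binom{\ell+n}{n}$, and the alternating Vandermonde identity $\sum_m(-1)^m\binom{k}{m}\binom{r-m}{j-m}=\binom{r-k}{j}$ (proved by iterating Pascal's rule).

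For the ``only if'' direction the $T$–law is immediate from the group property: since $E_2$ and all $h_\ell$ are $T$–invariant, so is each $g_\ell$, whence $\vec F(Tz)=M(z+1)\vec g(z)=M(1)M(z)\vec g(z)=\rho(T)\vec F(z)$. The heart of the matter is the $S$–law, and here I would first prove the recursive transformation formula
\begin{equation*}
  z^{-(w-2\ell)}g_\ell(Sz)=\sum_{n=0}^{s-\ell}\binom{s-\ell}{n}z^{-n}g_{\ell+n}(z),
\end{equation*}
that is, that the forms attached by \eqref{eq:quasig} to the quasimodular form $g_\ell$ (of weight $w-2\ell$ and depth $s-\ell$) are exactly the $g_{\ell+n}$; this is a direct computation from \eqref{eq:quasig} that collapses using the subset–of–subset identity. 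Substituting $Sz=-1/z$ and this formula into $f_k=\sum_\ell\binom k\ell z^{k-\ell}g_\ell$ and collecting powers of $z$, the assertion $z^{r-w}\vec F(Sz)=\rho(S)\vec F(z)$ becomes a single matrix identity of the shape $z^{r}M(-1/z)N(z)=\rho(S)M(z)$, where $N(z)$ encodes the recursive formula; comparing the coefficient of $g_j$ reduces it precisely to the alternating Vandermonde identity.

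For the ``if'' direction I would run the same dictionary backwards. Given a holomorphic $\vec F$ satisfying \eqref{eq:F-transS}--\eqref{eq:F-transT}, set $\vec g(z)=M(-z)\vec F(z)$; the $T$–law then forces $\vec g(z+1)=M(-z-1)\rho(T)\vec F(z)=M(-z)\vec F(z)=\vec g(z)$, so every $g_\ell$ is $T$–periodic and admits a $q$–expansion. Conjugating the $S$–law by $M$ yields exactly the recursive formula above for the $g_\ell$; reading it off from $\ell=r$ downwards shows by induction that $g_\ell$ is covariant of weight $w-2\ell$ with the correct depth, so that $f=g_0$ is a quasimodular form and $\vec F$ is the vector it induces. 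The remaining point is regularity at $i\infty$: periodicity together with the $S$–covariance and the holomorphy of $\vec F$ force each $g_\ell$ to be holomorphic at the cusp, hence a genuine modular form.

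I expect the main obstacle to be twofold. Conceptually, the degenerate case $s<r$ needs care: the transformation law selects the normalisation in which $f$ is regarded as having formal depth $r$ (equivalently, $g_\ell=0$ for $\ell>s$ with the binomial factors taken relative to $r$ rather than $s$), and one must check that this is compatible with \eqref{eq:quasig}, since otherwise the coefficients produced by the $S$–law and by the definition of $f_k$ differ. Technically, the bookkeeping of the two nested binomial identities — one governing the recursion among the $g_\ell$, the other governing the conjugation of $\rho(S)$ by $M$ — is where the computation is most error–prone, although each individual step is elementary.
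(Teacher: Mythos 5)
Your proposal is correct and takes essentially the same route as the paper: the paper likewise reduces everything to the component forms $g_\ell$, proves exactly your recursive $S$-transformation formula for the $g_\ell$ (written with binomials $\binom{r-\ell}{m}$ relative to the formal depth $r$, i.e.\ in the renormalized convention you rightly flag for the degenerate case $s<r$, obtained by setting $h_\ell=0$ for $\ell>s$), collapses the resulting double sum via the same alternating Vandermonde evaluation $\sum_{\ell}(-1)^\ell\binom{k}{\ell}\binom{r-\ell}{r-m}=\binom{r-k}{m}$ for the ``only if'' direction, and for the converse inverts the triangular $T$-law inductively --- which is precisely your $\vec{g}=M(-z)\vec{F}$ carried out entrywise --- before deriving the $S$-recursion for the $g_\ell$ by a second induction. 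Your one-parameter-group packaging $M(z)M(z')=M(z+z')$ is a tidy reformulation rather than a different argument, and the one point you leave slightly vague (that $T$-periodicity, $S$-covariance and holomorphy force regularity of the $g_\ell$ at the cusp) is glossed in exactly the same way in the paper, which simply asserts that the $T$-invariant functions $g_\ell$ admit power series expansions in $q$.
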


\begin{proof}
  Let
  \begin{equation*}
    f(z)=\sum_{\ell=0}^rE_2^\ell h_\ell
  \end{equation*}
  be a quasimodular form of weight $w$ and depth $s\leq r$; the case $s<r$ is
  included by setting $h_\ell=0$ for $\ell>s$. Then $h_\ell$
  ($\ell=0,\ldots,r$) is a modular form of weight $w-2\ell$. The transformation
  behavior of $f$ under $S$ and $T$ is given by \eqref{eq:quasiST}.

  The forms $g_\ell$ given by \eqref{eq:quasig} transform under $S$ by
\begin{equation*}
  z^{2\ell-w}g_\ell(Sz)=\sum_{m=0}^{r-\ell}\binom{r-\ell}m\frac1{z^m}g_{m+\ell}(z).
\end{equation*}

Using this we obtain
\begin{align*}
  z^{r-w}f_k(Sz)&=\sum_{\ell=0}^k\left(-\frac1z\right)^\ell
  z^{r-w}g_{k-\ell}(Sz)\\
  &= (-1)^k\sum_{m=0}^rz^{r-k-m}g_m(z)\sum_{\ell=0}^m(-1)^\ell
  \binom k\ell\binom{r-\ell}{r-m}.
\end{align*}
The inner sum equals $\binom{r-k}m$, which gives
\begin{equation*}
 z^{r-w}f_k(Sz)=(-1)^k\sum_{m=0}^{r-k}\binom{r-k}mz^{r-k-m}g_m(z)=(-1)^kf_{r-k}(z).
\end{equation*}

Similarly, we have for the transformation behavior under $T$
\begin{align*}
  f_k(Tz)&=\sum_{\ell=0}^k\sum_{m=0}^\ell\binom\ell m\binom k\ell
  z^mg_{k-\ell}(z)\\
  &=  \sum_{p=0}^k\binom kp\sum_{m=0}^p\binom pm z^mg_{p-m}(z)=
   \sum_{p=0}^k\binom kp f_p(z).
 \end{align*}
 
 Assume now that $\vec{F}=(f_0,\ldots,f_r)^\trans$ is an $(r+1)$-dimensional
 vector valued function satisfying \eqref{eq:F-transS} and
 \eqref{eq:F-transT}. Then $f_0$ is $T$-invariant, thus admits a power series
 representation in $q$, which we denote by $g_0$ for simplifying the notation
 in the following argument. By \eqref{eq:rhoT} the second coordinate $f_1$
 satisfies
  \begin{equation*}
    f_1(Tz)=f_1(z)+f_0(z);
  \end{equation*}
  we consider the  function
  \begin{equation*}
    g_1(z)=f_1(z)-zf_0(z),
  \end{equation*}
  which is $T$-invariant. Thus we can write
  \begin{equation*}
    f_1(z)=zg_0(z)+g_1(z),
  \end{equation*}
  where $g_1$ is a power series in $q$. Assume now by induction that we have
  already shown that
  \begin{equation*}
    f_m(z)=\sum_{\ell=0}^m\binom m\ell z^\ell g_{m-\ell}(z)
  \end{equation*}
for $0\leq m<k$  where each of the functions $g_0,\ldots,g_{k-1}$ is a power
series in $q$. We define the function
\begin{equation*}
  g_k(z)=f_k(z)-\sum_{\ell=1}^k\binom k\ell z^\ell g_{k-\ell}(z).
\end{equation*}
Then we have
\begin{align*}
  g_k(Tz)&=\sum_{\ell=0}^k\binom k\ell f_\ell(z)-
  \sum_{\ell=1}^k\binom k\ell\sum_{m=0}^\ell z^m g_{k-\ell}(z)\\
  &=  f_k(z)+\!\sum_{\ell=0}^{k-1}\binom k\ell\sum_{m=0}^\ell\binom \ell m
  z^mg_{\ell-m}(z)-\!
  \sum_{\ell=1}^k\binom k\ell\sum_{m=0}^\ell \binom \ell mz^m g_{k-\ell}(z).
\end{align*}
A similar rearrangement as before then gives $g_k(Tz)=g_k(z)$, which shows that
$g_k$ can be expressed as a power series in $q$. Summing up,
it follows from \eqref{eq:F-transT} that there are power series in $q$,
$g_0,\ldots,g_r$, such that each of the functions $f_k$, $k=0,\ldots,r$ can be
expressed as
\begin{equation*}
  f_k(z)=\sum_{\ell=0}^k\binom k\ell z^\ell g_{k-\ell}(z).
\end{equation*}

Assume now that in addition \eqref{eq:F-transS} holds. Then we have
\begin{equation*}
  z^{r-w}f_0(Sz)=f_r(z)=\sum_{\ell=0}^r\binom r\ell z^\ell g_{r-\ell}(z).
\end{equation*}
Now we have (recall that $f_0=g_0$)
\begin{align*}
  z^{r-w}f_1(Sz)&=z^{r-w}\left(\left(-\frac1z\right)g_0(Sz)+g_1(Sz)\right)\\
  &=  -\frac1z\sum_{\ell=0}^r\binom r\ell z^\ell g_{r-\ell}(z)+z^{r-w}g_1(Sz)\\
  &=  -f_{r-1}(z)=-\sum_{\ell=0}^{r-1}\binom{r-1}\ell z^\ell g_{r-1-\ell}(z),
\end{align*}
from which we derive
\begin{align*}
  z^{r-w}g_1(Sz)&=\sum_{\ell=0}^r\binom r\ell z^{\ell-1}g_{r-\ell}(z)-
  \sum_{\ell=0}^{r-1}\binom{r-1}\ell z^\ell g_{r-1-\ell}(z)\\
  &=  \frac1zg_r(z)+\sum_{\ell=0}^{r-1}\left[\binom r{\ell+1}-\binom
    {r-1}\ell\right]
  z^\ell g_{r-1-\ell}(z)\\
  &=\sum_{\ell=0}^{r-1}\binom {r-1}\ell z^{\ell-1}g_{r-\ell}(z),
\end{align*}
which gives
\begin{equation*}
  z^{2-w}g_1(Sz)=\sum_{\ell=0}^{r-1}\binom{r-1}\ell\frac1{z^\ell}g_{\ell+1}(z).
\end{equation*}
Assume now by induction that we have already shown
\begin{equation*}
  z^{2m-w}g_m(Sz)=\sum_{\ell=0}^{r-m}\binom {r-m}\ell \frac1{z^\ell}g_{m+\ell}(z)
\end{equation*}
for $0\leq m<k$.

Applying $S$ to $f_k$ gives
\begin{align*}
  z^{r-w}f_k(Sz)&=z^{r-w}\sum_{\ell=0}^k\binom k\ell\left(-\frac1z\right)^\ell
  g_{k-\ell}(Sz)\\
  &=z^{r-w}g_k(Sz)+\sum_{\ell=1}^k(-1)^\ell\binom k\ell z^{r+\ell-2k}
  \sum_{m=0}^{r+\ell-k}\binom{r+\ell-k}m\frac1{z^m}g_{k-\ell+m}(z)\\
  &=z^{r-w}g_k(Sz)+\sum_{m=0}^rz^{m-k}g_{r-m}(z)
  \sum_{\ell=1}^{k}(-1)^\ell\binom k\ell\binom{r+\ell-k}m.
\end{align*}
The inner sum evaluates to
\begin{equation*}
  \sum_{\ell=1}^{k}(-1)^\ell\binom k\ell\binom{r+\ell-k}m
  =(-1)^k\binom {r-k}{m-k}-\binom{r-k}m,
\end{equation*}
which gives
\begin{align}\label{eq:f_kS}
  z^{r-w}f_k(Sz)&=
  z^{r-w}g_k(Sz)+(-1)^k\sum_{m=k}^r\binom{r-k}{m-k}z^{m-k}g_{r-m}(z)\\
   &-\sum_{m=0}^{r-k}\binom{r-k}m z^{m-k}g_{r-m}(z).
\end{align}
On the other hand we have
\begin{equation*}
  z^{r-w}f_k(Sz)=(-1)^kf_{r-k}(z)=
  (-1)^k\sum_{\ell=0}^{r-k}\binom{r-k}\ell z^\ell g_{r-k-\ell}(z),
\end{equation*}
which equals the first sum after shifting the index of summation.
Inserting this into \eqref{eq:f_kS} gives
\begin{equation*}
  z^{r-w}g_k(Sz)=\sum_{m=0}^{r-k}\binom{r-k}m z^{m-k}g_{r-m}(z).
\end{equation*}
Thus $f_0=g_0$ has the transformation behavior of a quasimodular form under
the action of $S$ and $T$. Since $S$ and $T$ generate $\Gamma$, this together
with the assumed holomorphy implies that $f_0$ is a quasimodular form.
\end{proof}
A similar reasoning was used in
\cite{Cohn_Kumar_Miller+2019:universal_optimality} in the case of $r=2$ to show
that the solutions of certain functional equations were quasimodular.

In order to get a better understanding of quasimodular vectors, we study the
modular Wronskian of a quasimodular vector $\vec{F}$:
\begin{equation}\label{eq:wronski}
  W(z)=W_{\vec{F}}(z)=\det\left(\vec{F},\partial_{w-r}\vec{F},\ldots,
  \partial_{w-r}^r\vec{F}\right).
\end{equation}
For vector valued modular forms this has been studied in
\cite{Mason2007:vector_valued_modular}. The most important property of $W$ is
its modularity.
\begin{proposition}\label{prop:wronski}
  Let $\vec{F}$ be a quasimodular vector of weight $w$ and dimension
  $r+1$. Then the corresponding modular Wronskian $W_{\vec{F}}$ is a modular
  form of weight $w(r+1)$.
\end{proposition}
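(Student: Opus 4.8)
The plan is to show that the matrix whose columns are $\vec{F}$ and its iterated Serre derivatives transforms under $S$ and $T$ by the very same matrices $\rho(S),\rho(T)$ (up to a scalar automorphy factor under $S$), and then to read off the modularity of $W$ by taking determinants. Write $\vec{F}_k=\partial_{w-r}^k\vec{F}$ for $k=0,\dots,r$, so that $W=\det(\vec{F}_0,\dots,\vec{F}_r)$. Since $S$ and $T$ generate $\Gamma$ it suffices to control each $\vec{F}_k$ under these two generators; holomorphy of $W$ on $\HH$ is automatic as a determinant of holomorphic functions, and holomorphy at $i\infty$ will be treated separately at the end. Concretely, I would prove by induction on $k$ the two transformation laws
\begin{equation*}
  \vec{F}_k(Sz)=z^{\,w-r+2k}\rho(S)\vec{F}_k(z),\qquad
  \vec{F}_k(Tz)=\rho(T)\vec{F}_k(z),
\end{equation*}
whose base case $k=0$ is exactly Proposition~\ref{prop:transf}.

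For the $S$-law I would rewrite the inductive hypothesis as $z^{-(w-r+2k)}\vec{F}_k(Sz)=\rho(S)\vec{F}_k(z)$ and apply $\partial_{w-r+2k}$ to both sides. Applied componentwise with weight $w-r+2k$, Lemma~\ref{lem:serre} turns the left-hand side into $z^{-(w-r+2k+2)}\vec{F}_{k+1}(Sz)$, while the right-hand side becomes $\rho(S)\vec{F}_{k+1}(z)$ since $\rho(S)$ is a constant matrix commuting with $\partial_{w-r+2k}$; this advances the induction with the weight raised by $2$, exactly matching the convention $\partial_w^{k+1}=\partial_{w+2k}(\partial_w^k\,\cdot\,)$. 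For the $T$-law I would use that $E_2$ is $T$-invariant, so each Serre derivative commutes with the substitution $z\mapsto z+1$ (as already noted in the proof of Lemma~\ref{lem:invar}); hence $\vec{F}_{k+1}(Tz)=\partial_{w-r+2k}(\vec{F}_k\circ T)=\rho(T)\vec{F}_{k+1}(z)$.

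Collecting the columns into $M=(\vec{F}_0,\dots,\vec{F}_r)$, these laws read $M(Sz)=\rho(S)\,M(z)\,\mathrm{diag}(z^{w-r},z^{w-r+2},\dots,z^{w-r+2r})$ and $M(Tz)=\rho(T)M(z)$. Taking determinants gives
\begin{equation*}
  W(Sz)=\det\rho(S)\cdot z^{\sum_{k=0}^r(w-r+2k)}\,W(z),\qquad
  W(Tz)=\det\rho(T)\cdot W(z).
\end{equation*}
Since $\sum_{k=0}^r(w-r+2k)=(r+1)(w-r)+r(r+1)=w(r+1)$, the automorphy factor under $S$ is $z^{w(r+1)}$. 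The matrix $\rho(T)$ in \eqref{eq:rhoT} is lower unitriangular, so $\det\rho(T)=1$ and $W$ is $T$-invariant; the matrix $\rho(S)$ in \eqref{eq:rhoS} is anti-diagonal with entries $(-1)^i$ ($i=0,\dots,r$), and a short check gives $\det\rho(S)=(-1)^{r(r+1)/2}\cdot(-1)^{r(r+1)/2}=(-1)^{r(r+1)}=1$ (product of the entries times the sign of the order-reversing permutation). Thus $z^{-w(r+1)}W(Sz)=W(z)$ and $W(Tz)=W(z)$, so $W$ satisfies \eqref{eq:modular} for weight $w(r+1)$ and is a weakly holomorphic modular form.

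Finally I would check holomorphy at $i\infty$. By Proposition~\ref{prop:transf} every component of $\vec{F}$ has the shape $\sum_\ell\binom{k}{\ell}z^{k-\ell}g_{k-\ell}(z)$ with the $g_j$ genuine (non-negative) power series in $q$, and the identity $(z^ag)'=\tfrac{a}{2\pi i}z^{a-1}g+z^ag'$ together with $E_2$ being a holomorphic $q$-series shows that $\partial$ preserves the class of polynomials in $z$ with holomorphic $q$-series coefficients; hence every entry of $M$, and so $W=\det M$, is such a polynomial in $z$. But $W$ is $T$-invariant, and a polynomial $\sum_a z^a c_a(q)$ can satisfy $W(z+1)=W(z)$ only if all positive powers of $z$ vanish, so $W=c_0(q)$ is itself a holomorphic power series in $q$, which is exactly holomorphy at $i\infty$. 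The step I expect to be most delicate is precisely this last one: the individual columns $\vec{F}_k$ grow polynomially in $z$, so holomorphy at the cusp is invisible columnwise and emerges only after the determinant is combined with $T$-invariance; by contrast the transformation laws are a routine induction on Lemmas~\ref{lem:serre} and~\ref{lem:invar}, and the identity $\det\rho(S)=1$ is a brief combinatorial computation.
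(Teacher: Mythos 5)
Your proof is correct and follows essentially the same route as the paper: Lemma~\ref{lem:serre} together with Proposition~\ref{prop:transf} gives the columnwise laws $\rho(S)\partial_{w-r}^\ell\vec{F}(z)=z^{r-w-2\ell}\bigl(\partial_{w-r}^\ell\vec{F}\bigr)(Sz)$ and $\vec{F}_k(Tz)=\rho(T)\vec{F}_k(z)$, and taking determinants with $\det\rho(S)=\det\rho(T)=1$ and the exponent count $\sum_{k=0}^r(w-r+2k)=w(r+1)$ yields the assertion, exactly as in the paper (your explicit induction just unwinds what the paper states directly). Your final paragraph, showing via $T$-invariance that the polynomial-in-$z$ terms of $W$ must vanish so that $W$ is a genuine $q$-series holomorphic at $i\infty$, is a correct and welcome addition that the paper's proof leaves implicit (it is tacitly invoked later, in the proof of Proposition~\ref{prop:balanced}).
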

\begin{proof}
 Let $\vec{F}$ be a quasimodular vector. Then $\vec{F}(Tz)=\rho(T)\vec{F}(z)$
 and therefore $W(Tz)=W(z)$.

 For the transformation behavior under $S$, we recall that by
 Lemma~\ref{lem:serre} and Proposition~\ref{prop:transf}
 \begin{equation*}
   \rho(S)\partial_{w-r}^\ell\vec{F}(z)=\partial_{w-r}^\ell z^{r-w}\vec{F}(Sz)=
   z^{r-w-2\ell}\left(\partial_{w-r}^\ell\vec{F}\right)(Sz),
 \end{equation*}
 from which we derive
 \begin{equation*}
   z^{-w(r+1)}W(Sz)=\det(\rho(S))W(z).
 \end{equation*}
 Since $\det(\rho(S))=1$, this gives the assertion.
\end{proof}
For the fundamental system of a normalized modular differential equation we
have a far more precise statement. This is the analogue to
\cite[Theorem~4.3]{Mason2007:vector_valued_modular}.
\begin{proposition}\label{prop:normal}
  Let $f_0,f_1,\ldots,f_r$ be a fundamental system of solutions of the
 normalized modular differential equation
  \begin{equation*}
    \partial_{w-r}^{r+1}f+B_4\partial_{w-r}^{r-1}f+\cdots+B_{2r+2}f=0,
  \end{equation*}
  where $B_4,B_6,\ldots,B_{2r+2}$ are modular form of respective weights
  $4,6,\ldots,\allowbreak 2r+2$. Assume further that the solutions of the
  indicial equation $\lambda_0\geq\lambda_1\geq\cdots\geq\lambda_r\geq0$ are
  all integers. Then the modular Wronskian of $f_0,\ldots,f_r$ equals
  $c\Delta^{\frac{w(r+1)}{12}}$ for some constant $c\neq0$.
\end{proposition}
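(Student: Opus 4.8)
The plan is to reduce the modular Wronskian to a classical (ordinary) Wronskian and then apply Abel's formula. First I would observe that $W$ coincides with the ordinary Wronskian $\det\bigl(f_j^{(\ell)}\bigr)_{0\le\ell,j\le r}$ formed with the derivative ${}'=\frac1{2\pi i}\frac{d}{dz}$. Indeed, from the recursive definition of the iterated Serre derivative one gets $\partial_{w-r}^\ell f=f^{(\ell)}+(\text{lower-order derivatives of }f)$, so the matrix $\bigl(\partial_{w-r}^\ell f_j\bigr)$ arises from $\bigl(f_j^{(\ell)}\bigr)$ by a lower-unitriangular row transformation (with holomorphic coefficients). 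Such a transformation has determinant $1$, so the two Wronskians agree; I denote their common value by $W$.

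Next I would rewrite the differential equation in ordinary-derivative form and isolate the coefficient of $f^{(r)}$. Expanding the leading operator gives $\partial_{w-r}^{r+1}f=f^{(r+1)}+a_{r+1}f^{(r)}+\cdots$, where a short induction on $k$ based on $\partial_u g=g'-\frac{u}{12}E_2g$ yields $a_k=-\frac{E_2}{12}\,k\,(w-r+k-1)$, hence $a_{r+1}=-\frac{w(r+1)}{12}E_2$. The key structural point is that the equation in Proposition~\ref{prop:normal} has no $\partial_{w-r}^{r}$ term: such a term would carry a modular coefficient of weight $2$, but $\M_2=\{0\}$. Consequently the coefficient of $f^{(r)}$ comes solely from $\partial_{w-r}^{r+1}f$ (every lower operator $B_{2\ell}\partial_{w-r}^{r+1-\ell}$ with $\ell\ge2$ produces only $f^{(r-1)}$ and below), so the normalized scalar equation reads $f^{(r+1)}+p_rf^{(r)}+\cdots=0$ with $p_r=-\frac{w(r+1)}{12}E_2$.

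Now Abel's formula applies directly: differentiating $W=\det\bigl(f_j^{(\ell)}\bigr)$ and substituting the equation for each $f_j^{(r+1)}$ (all repeated-row terms dropping out) gives $W'=-p_rW=\frac{w(r+1)}{12}E_2\,W$. By \eqref{eq:serre-ramanujan} we have $\partial_{12}\Delta=\Delta'-E_2\Delta=0$, i.e. $\frac{\Delta'}{\Delta}=E_2$, so $\frac{W'}{W}=\frac{w(r+1)}{12}\frac{\Delta'}{\Delta}$, and integration yields $W=c\,\Delta^{w(r+1)/12}$ for a constant $c$. Since $f_0,\dots,f_r$ form a fundamental system they are linearly independent, whence $W\not\equiv0$ and $c\neq0$. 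The integrality hypothesis enters only to make the right-hand side a bona fide modular form: the indicial polynomial \eqref{eq:indicial} is monic of degree $r+1$, and—again because $B_2=0$—its $x^r$-coefficient comes only from $q_{r+1}(x,w)$ and equals $-\frac{w(r+1)}{12}$, so $\lambda_0+\cdots+\lambda_r=\frac{w(r+1)}{12}$; as each $\lambda_j\in\mathbb{Z}_{\ge0}$, this exponent is a nonnegative integer and $\Delta^{w(r+1)/12}$ is a genuine modular form of weight $w(r+1)$, consistent with Proposition~\ref{prop:wronski}.

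The main obstacle I anticipate is the bookkeeping for the subleading coefficient $p_r$: one must expand the iterated Serre derivative carefully enough to pin down $a_{r+1}$ and, just as importantly, verify that none of the lower operators in the equation contributes to the coefficient of $f^{(r)}$. The vanishing $\M_2=\{0\}$ is precisely the clean fact that closes this gap, after which the classical Abel argument does the rest.
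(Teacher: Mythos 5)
Your proof is correct, and it takes a genuinely different route from the paper. The paper never touches Abel's formula: it first establishes that the modular Wronskian $W$ is a weight-$w(r+1)$ modular form by working with the representation attached to the quasimodular vector $\vec{F}$ --- computing $\det\rho(T)=1$ from the unitriangular structure of the Frobenius system and $\det\rho(S)=1$ from the relations $\rho(S)^2=(-1)^r\id$ and $(\rho(S)\rho(T))^3=(-1)^r\id$ --- and then pins down $W$ by complex-analytic rigidity: $W$ vanishes to order at least $\lambda_0+\cdots+\lambda_r=\frac{w(r+1)}{12}$ at $i\infty$ (here the individual integer exponents of the Frobenius solutions are used), is nonvanishing on $\HH$, so $W/\Delta^{w(r+1)/12}$ is a holomorphic weight-$0$ form, hence a nonzero constant. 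Your argument instead reduces to the classical Wronskian via the unitriangular change of basis (valid because $\partial_{w-r}^\ell$ is a fixed operator whose coefficients do not depend on the column, so the determinant is unchanged), extracts the subleading coefficient $-\frac{w(r+1)}{12}E_2$ --- your bookkeeping checks out, since $\sum_{k=0}^{r}(w-r+2k)=w(r+1)$ and, as you correctly isolate, $\M_2=\{0\}$ forces $B_2=0$ so no other term reaches $f^{(r)}$ --- and integrates Abel's identity against $\Delta'/\Delta=E_2$ from \eqref{eq:serre-ramanujan}. Your route is more elementary and in one respect stronger: it works for an arbitrary fundamental system (the paper reduces to the Frobenius system ``without loss of generality''), it uses the integrality hypothesis only through the sum $\lambda_0+\cdots+\lambda_r=\frac{w(r+1)}{12}\in\mathbb{Z}_{\geq0}$ read off from \eqref{eq:indicial}, and it delivers the modularity of $W$ as a corollary rather than requiring Proposition~\ref{prop:wronski} as input. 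What the paper's route buys in exchange is the representation-theoretic information itself --- $\det\rho(S)=\det\rho(T)=1$ and the modular transformation law of $W$ --- which is of independent use in the section (it underlies Proposition~\ref{prop:balanced}, where only an inequality on vanishing orders is available and the Abel argument would not apply) and makes the analogy with Mason's vector-valued theory explicit.
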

\begin{proof}
  Without loss of generality, we take $f_0,f_1,\ldots,f_r$ as the solutions
  obtained by the Frobenius ansatz in this order. Notice that
  $\lambda_0+\cdots+\lambda_r=\frac{w(r+1)}{12}$. Let
  $\vec{F}=(f_0,\ldots,f_r)^\trans$. Then there exist matrices $\rho(S)$ and
  $\rho(T)$, such that
  \begin{equation*}
    \vec{F}(Tz)=\rho(T)\vec{F}(z)\quad\text{and }
    z^{r-w}\vec{F}(Sz)=\rho(S)\vec{F}(z).
  \end{equation*}
  By the construction of $\vec{F}$ from the Frobenius ansatz it follows that
  $\rho(T)$ is lower triangular with entries $1$ on the diagonal, which gives
  $\det(\rho(T))=1$. On the other hand we have
  \begin{equation*}
    \rho(S)^2=(-1)^r\id,
  \end{equation*}
  from which we derive $\det(\rho(S))^2=1$. Furthermore, we have
  \begin{equation*}
    \left(\rho(S)\rho(T)\right)^3=(-1)^r\id,
  \end{equation*}
  which implies $\det(\rho(S))^3=1$. Thus we finally have $\det(\rho(S))=1$.

  Applying Lemma~\ref{lem:serre} we obtain
  \begin{equation*}
    \partial_{w-r}^k\left(z^{r-w}\vec{F}(Sz)\right)=
    z^{r-w-2k}\left(\partial_{w-r}^k\vec{F}\right)(Sz).
  \end{equation*}
  The Wronskian
  $W=\det(\vec{F},\partial_{w-r}\vec{F},\ldots,\partial_{w-r}^r\vec{F})$ then
  satisfies
  \begin{align*}
    &z^{-w(r+1)}W(Sz)\\
    &=\det\left(z^{r-w}\vec{F}(Sz),
      z^{r-w-2}\left(\partial_{r-w}\vec{F}\right)(Sz),\ldots,
      z^{-r-w}\left(\partial_{r-w}^r\vec{F}\right)(Sz)\right)\\
    &=\det\left(\rho(S)\vec{F}(z),\rho(S)\partial_{w-r}\vec{F}(z),\ldots,
    \rho(S)\partial_{w-r}^r\vec{F}(z)\right)=\det(\rho(S))W(z)=W(z).
\end{align*}
Similarly, we have $W(Tz)=W(z)$. Thus $W$ is a modular form with weight
$w(r+1)$. From the vanishing orders of $f_i$ we obtain that $W$ vanishes to
order (at least) $\frac{w(r+1)}{12}$ at $i\infty$. Since $W(z)\neq0$ for
$z\in\HH$, this implies that $W$ has to be a non-zero multiple of
$\Delta^{\frac{w(r+1)}{12}}$, thus proving the assertion.
\end{proof}

\begin{proposition}\label{prop:solution}
  Let $f$ be a quasimodular form of depth $\leq r$ and weight $w$ and assume
  that $f$ is a solution of a differential equation \eqref{eq:diff-r}. Let
  then $g_\ell$ ($\ell=0,\ldots,r$) be given by \eqref{eq:quasig}. Then the
  functions
  \begin{equation}
    \label{eq:fundamental}
    f_\ell(z)=\sum_{m=0}^\ell\binom\ell m z^mg_{\ell-m}(z)
  \end{equation}
  form a fundamental system of \eqref{eq:diff-r}.
\end{proposition}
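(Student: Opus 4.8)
The plan is to show that the functions $f_\ell$ defined by \eqref{eq:fundamental} are all solutions of \eqref{eq:diff-r}, and then that they are linearly independent, which together with the fact that a differential equation of order $r+1$ has a solution space of dimension exactly $r+1$ yields the claim that they form a fundamental system.

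First I would verify that the vector $\vec{F}=(f_0,\ldots,f_r)^\trans$ assembled from these $f_\ell$ is precisely the quasimodular vector attached to $f$ in Definition~\ref{def:quasi-vec}, so that by Proposition~\ref{prop:transf} it satisfies the transformation laws \eqref{eq:F-transS} and \eqref{eq:F-transT} with the matrices $\rho(S)$ and $\rho(T)$. The key observation is that $f_0=g_0=f$ is a solution of \eqref{eq:diff-r} by hypothesis. By Lemma~\ref{lem:invar}, for any $\gamma=\bigl(\begin{smallmatrix}a&b\\c&d\end{smallmatrix}\bigr)\in\Gamma$ the function $(cz+d)^{r-w}f(\gamma z)$ is again a solution. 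The idea is then to express each $f_\ell$ as a suitable $\mathbb{C}$-linear combination of such modular translates of $f_0$, from which it follows immediately that every $f_\ell$ is a solution. Concretely, since $\vec{F}(Tz)=\rho(T)\vec{F}(z)$ with $\rho(T)$ lower unitriangular, the coordinates $f_\ell$ are built from $f_0$ by the successive action of $T$-translates; iterating $f_0(z),f_0(z+1),f_0(z+2),\ldots$ and taking finite differences recovers $zf_0, z^2f_0,\ldots$ up to lower-order terms, and by \eqref{eq:fundamental} the $f_\ell$ lie in the span of $\{z^mf_0(z) : m\geq0\}$ together with the $g_{\ell-m}$, which are themselves expressible through the $S$- and $T$-translates of $f_0$ via \eqref{eq:quasiST} and \eqref{eq:quasig}. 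Thus each $f_\ell$ is a finite linear combination of translates $(cz+d)^{r-w}f(\gamma z)$ and hence a solution.

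For linear independence I would use the Wronskian. The forms $g_\ell$ are $q$-power series, so writing $f_\ell(z)=\sum_{m=0}^\ell\binom\ell m z^m g_{\ell-m}(z)$ exhibits $f_\ell$ as a polynomial in $z$ of degree $\ell$ with leading coefficient $g_0=f$ (times $\binom{\ell}{\ell}=1$) and $q$-series coefficients. A Wronskian-type argument then shows the $f_\ell$ are linearly independent over $\mathbb{C}$: a relation $\sum_\ell c_\ell f_\ell(z)=0$ is a polynomial identity in $z$ whose coefficient of $z^{\ell}$ forces $c_\ell g_0=0$ plus lower contributions, and since $f=g_0\not\equiv0$ one peels off the coefficients from the top degree downward to conclude all $c_\ell=0$. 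Alternatively one invokes that the modular Wronskian $W_{\vec{F}}$ equals $c\Delta^{w(r+1)/12}$ with $c\neq0$ by the reasoning of Proposition~\ref{prop:normal}, which is nonvanishing on $\HH$ and hence certifies independence.

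The main obstacle is the first step: carefully justifying that \emph{each} coordinate $f_\ell$, and not merely $f_0$, lies in the solution space. Because $\rho(T)$ is only unitriangular (its action is not diagonalizable, as emphasized in the introduction), the higher $f_\ell$ genuinely carry powers of $z$, and one must confirm that the polynomial-in-$z$ combinations produced by finite differences of $T$-translates exactly reproduce \eqref{eq:fundamental} rather than some other span. The cleanest route is to fix the weight normalization $r-w$ from Lemma~\ref{lem:invar} and check that the $(r+1)$ functions $z^{r-w}f(Sz), f(Tz), f(z),\ldots$ span the same $(r+1)$-dimensional space as $f_0,\ldots,f_r$ by comparing the two triangular change-of-basis matrices $\rho(S)$ and $\rho(T)$; once this is in place, both membership in the solution space and the dimension count (solution space has dimension $r+1$) close the argument.
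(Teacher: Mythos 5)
Your overall strategy --- generate new solutions from $f$ via Lemma~\ref{lem:invar}, express the $f_\ell$ as linear combinations of modular translates, and finish with linear independence plus a dimension count --- is the same as the paper's. But the concrete mechanism you propose for the crucial step fails. You claim that iterating the $T$-translates $f_0(z), f_0(z+1), f_0(z+2),\ldots$ and taking finite differences recovers $zf_0, z^2f_0,\ldots$: this cannot work, because $f_0=f$ is a quasimodular form and hence $T$-invariant by \eqref{eq:quasiST}, so all its $T$-translates equal $f_0$ itself and every finite difference vanishes identically. The polynomial-in-$z$ structure only appears after applying $S$. Your fallback list ``$z^{r-w}f(Sz), f(Tz), f(z),\ldots$'' collapses for the same reason ($f(Tz)=f(z)$) and never produces $r+1$ distinct translates. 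Moreover, the remark that the $f_\ell$ lie in the span of the $z^mf_0$ together with the $g_{\ell-m}$, ``expressible through $S$- and $T$-translates,'' does not repair this: neither $z^mf_0$ (outside the degenerate $\Delta$-power case) nor the individual $g_\ell$ are solutions of \eqref{eq:diff-r} in general --- only the full combinations $f_\ell$ are --- so exhibiting the $f_\ell$ inside that larger span proves nothing about membership in the solution space.

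The missing idea, which is exactly how the paper closes this gap, is to translate the $S$-transform rather than $f$ itself: by $S$-invariance, $f_r(z)=z^{r-w}f(Sz)=\sum_{\ell=0}^r\binom r\ell z^\ell g_{r-\ell}(z)$ is a solution, and then by $T$-invariance so is
\begin{equation*}
  f_r(T^kz)=\sum_{\ell=0}^r\binom r\ell k^{r-\ell}f_\ell(z)
\end{equation*}
for each $k=0,1,\ldots,r$; this identity is the transformation law of Proposition~\ref{prop:transf} applied to the last coordinate. The $(r+1)\times(r+1)$ coefficient matrix with entries $k^{r-\ell}$ is of Vandermonde type, hence invertible, so each $f_\ell$ is a finite linear combination of the solutions $f_r(T^kz)$ and is itself a solution. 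Your linear-independence argument (peeling off coefficients of powers of $z$, using $g_0=f\not\equiv0$ and the $1$-periodicity of the coefficients) is sound and in fact more explicit than the paper, which asserts independence without comment; note, however, that your proposed Wronskian alternative via Proposition~\ref{prop:normal} does not apply verbatim, since that proposition concerns normalized equations with integer indicial roots, whereas \eqref{eq:diff-r} is general.
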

\begin{proof}
  Let $f=f_0$ be a solution of \eqref{eq:diff-r}. Then by $S$-invariance
  of the differential equation, also the function
  \begin{equation*}
z^{r-w}f(Sz)=\sum_{\ell=0}^r\binom r\ell z^\ell g_{r-\ell}(z)=f_r(z)
\end{equation*}
is a solution of \eqref{eq:diff-r}. By the invariance under $T$ of the
differential equation also the functions
\begin{equation*}
  f_r(T^kz)=\sum_{\ell=0}^r\binom r\ell k^{r-\ell} f_\ell(z)
\end{equation*}
are solutions for $k\in\mathbb{Z}$. Here we have used
Proposition~\ref{prop:transf}. This gives
\begin{equation*}
  \begin{pmatrix}
    f_r(z)\\[2mm]f_r(Tz)\\\vdots\\f_r(T^rz)
  \end{pmatrix}
  =
  \begin{pmatrix}
    0&0&\ldots&0&1\\
    1&1&\ldots&1&1\\
    2^{r+1}&2^r&\ldots&2&1\\
    \vdots&\vdots&\ddots&\vdots&1\\
    r^{r+1}&r^r&\ldots&r&1
  \end{pmatrix}
  \begin{pmatrix}
    \binom r0f_0(z)\\[2mm]
    \binom r1f_1(z)\\
    \vdots\\
    \binom rrf_r(z)
  \end{pmatrix},
\end{equation*}
which shows that the functions $f_\ell(z)$ ($\ell=0,\ldots,r$) can be expressed
as linear combinations of $f_r(T^kz)$ ($k=0,\ldots,r$) and therefore are again
solutions of \eqref{eq:diff-r}. Notice that the matrix is a Vandermonde matrix
and thus invertible.

Since the functions $f_\ell$ ($\ell=0,\ldots,r$) are linearly
independent they form a fundamental system of \eqref{eq:diff-r}.
\end{proof}
\section{Balanced quasimodular forms}\label{sec:balanc-quas-forms}
We start with a proposition that clarifies the possible orders of vanishing of
quasimodular vectors and the underlying quasimodular forms. This answers a
question posed in \cite{Kaneko_Koike2006:extremal_quasimodular_forms}. The
assertion of the proposition is the analogue to
\cite[Theorem~3.7 and Corollary~3.8]{Mason2007:vector_valued_modular}. A
similar inequality for the case of extremal quasimodular forms is given in
\cite{Pellarin_Nebe2019:extremal_quasi_modular}.
\begin{proposition}\label{prop:balanced}
  Let $f$ be a quasimodular form of weight $w$ and depth $r$ given by
  \begin{equation*}
    f(z)=\sum_{\ell=0}^rE_2^\ell h_\ell,
  \end{equation*}
  where $h_\ell$ ($\ell=0,\ldots,r$) are modular forms of weight $w-2\ell$. Let
  the functions $g_\ell$ given by \eqref{eq:quasig} have vanishing orders
  $\lambda_\ell<\dim\QM_{w-2\ell}^{r-\ell}$ ($\ell=0,\ldots,r$) with
  $\lambda_0\geq\lambda_1\geq\cdots\geq\lambda_r\geq0$, i.e.
  \begin{equation*}
    g_\ell(z)=q^{\lambda_\ell}\sum_{n=0}^\infty a_\ell(n)q^n,\quad\text{with }
    a_\ell(0)\neq0.
  \end{equation*}
  Then
  \begin{equation}
    \label{eq:vanish}
    \lambda_0+\ldots+\lambda_r\leq\frac{w(r+1)}{12}.
  \end{equation}
\end{proposition}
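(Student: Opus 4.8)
The plan is to deduce the inequality from the order of vanishing at $i\infty$ of the modular Wronskian. First I would attach to $f$ the quasimodular vector $\vec{F}=(f_0,\dots,f_r)^\trans$ of Definition~\ref{def:quasi-vec} and consider its modular Wronskian $W=W_{\vec{F}}$ from \eqref{eq:wronski}; by Proposition~\ref{prop:wronski} this is a modular form of weight $w(r+1)$. Since a nonzero holomorphic modular form of weight $N$ has $\operatorname{ord}_{i\infty}\le N/12$ (by the valence formula, all local orders being non-negative), it is enough to establish
\begin{equation*}
\operatorname{ord}_{i\infty}W\ge\lambda_0+\dots+\lambda_r\qquad\text{and}\qquad W\not\equiv0,
\end{equation*}
which with $N=w(r+1)$ yields \eqref{eq:vanish}.

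The lower bound is the easy half. From Definition~\ref{def:quasi-vec}, $f_k=\sum_{\ell=0}^k\binom k\ell z^{k-\ell}g_\ell$, and as $\lambda_0\ge\dots\ge\lambda_r$ every summand has $q$-order $\lambda_\ell\ge\lambda_k$. Because multiplication by $E_2$, multiplication by powers of $z$, and the operator ${}'=q\,d/dq$ all preserve or raise the $q$-order, each iterated Serre derivative $\partial_{w-r}^j f_k$ again has $q$-order $\ge\lambda_k$. Hence the $k$-th row of the matrix defining $W$ is divisible by $q^{\lambda_k}$, and expanding the determinant gives $\operatorname{ord}_{i\infty}W\ge\sum_k\lambda_k$.

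For $W\not\equiv0$ I would compute the leading Fourier coefficient. Writing $M(z)=\exp(zN)$ for the nilpotent matrix $N$ with $N_{k,k-1}=k$, one has $\vec{F}=M(z)\vec{G}$ with $\vec{G}=(g_0,\dots,g_r)^\trans$, and $\tfrac1{2\pi i}\tfrac{d}{dz}e^{zN}=\tfrac1{2\pi i}Ne^{zN}$ gives $\partial_{w-r}^j\vec{F}=e^{zN}D^j\vec{G}$ with $D=\partial_{w-r}+\tfrac1{2\pi i}N$ (iterated with the usual weight shifts). As $\det e^{zN}=1$, this produces the clean identity $W=\det(\vec{G},D\vec{G},\dots,D^r\vec{G})$. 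When the $\lambda_\ell$ are pairwise distinct only the pure Serre derivatives of $g_k$ reach $q$-order $\lambda_k$ in the $k$-th row; since $\partial_{w'}$ scales the leading coefficient of a $q$-series of order $\lambda$ by $\lambda-\tfrac{w'}{12}$, that coefficient equals $a_k(0)\,q_j(\lambda_k,w)$ with $q_j$ the building block of the indicial polynomial \eqref{eq:indicial}. Each $q_j$ being monic of degree $j$, the leading coefficient of $W$ is a Vandermonde-type determinant $\pm\bigl(\prod_k a_k(0)\bigr)\prod_{k<k'}(\lambda_{k'}-\lambda_k)$, which is nonzero because $a_k(0)\ne0$ and the $\lambda_k$ are distinct.

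The hard part is the case of coinciding exponents $\lambda_\ell=\lambda_{\ell+1}$, which is exactly the non-diagonalizable situation highlighted in the introduction and where the Vandermonde above degenerates. There the $q^{\sum\lambda_\ell}$-coefficient of $W$ is governed by the off-diagonal coupling through $N$ inside $D$, i.e.\ by a confluent Vandermonde determinant, and the genuine work is to show it is still nonzero --- equivalently, that $\vec{G}$ is a cyclic vector for $D$ --- using that every $a_\ell(0)\ne0$ together with the constraints $\lambda_\ell<\dim\QM_{w-2\ell}^{r-\ell}$. Once $W\not\equiv0$ is secured, the two displayed estimates close the argument.
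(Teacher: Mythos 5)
Your overall route is exactly the paper's: attach the quasimodular vector of Definition~\ref{def:quasi-vec}, use Proposition~\ref{prop:wronski} to see that $W$ is a modular form of weight $w(r+1)$, show $\operatorname{ord}_{i\infty}W\geq\lambda_0+\cdots+\lambda_r$, and compare with $w(r+1)/12$ (the paper phrases this as $W=\Delta^{\lambda_0+\cdots+\lambda_r}H$ with $H$ holomorphic). Your lower-bound half is correct, and the conjugation identity $W=\det(\vec{G},D\vec{G},\ldots,D^r\vec{G})$ with $\vec{F}=e^{zN}\vec{G}$, $D=\partial_{w-r}+\frac{1}{2\pi i}N$, is a genuinely nice sharpening of the paper's remark that the determinant eliminates the terms carrying $z$. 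The gap is that you never prove $W\not\equiv0$ when exponents coincide: you reduce it to the nonvanishing of a confluent Vandermonde-type coefficient and then explicitly declare this ``the genuine work'' without carrying it out. This is not a marginal case --- it is the one the paper actually needs downstream (balanced forms typically have repeated $\lambda_\ell$, and extremal forms have $\lambda_1=\cdots=\lambda_r=0$), so as written your argument establishes \eqref{eq:vanish} only for pairwise distinct exponents.

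The gap is real but closable, and more cheaply than your sketch suggests, because you only need $W\not\equiv0$, not that the coefficient of $q^{\lambda_0+\cdots+\lambda_r}$ is nonzero. Each $f_k$ is a polynomial in $z$ of exact degree $k$ with top coefficient $g_0\neq0$, and a polynomial in $z$ whose coefficients are $1$-periodic holomorphic functions vanishes only if all coefficients do (evaluate at $z_0+n$, $n\in\mathbb{Z}$); hence $f_0,\ldots,f_r$ are linearly independent over $\C$. Since $\partial_{w-r}^jf=f^{(j)}+\sum_{i<j}c_{ji}f^{(i)}$ with holomorphic $c_{ji}$, the matrix defining $W$ differs from the ordinary Wronskian matrix by unitriangular column operations, and the Wronskian of linearly independent holomorphic functions on a connected domain is not identically zero --- done. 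Alternatively, your confluent computation does succeed if pushed through: the $q^{\lambda_k}$-coefficient of $(D^j\vec{G})_k$ equals $\sum_m\frac{k!}{(k-m)!\,m!\,(2\pi i)^m}\,a_{k-m}(0)\,q_j^{(m)}(\lambda_k)$, summed over those $m$ with $\lambda_{k-m}=\lambda_k$, so within each block of equal exponents the rows are triangular combinations, with nonzero diagonal constants, of the rows $\bigl(q_j^{(m)}(\lambda)\bigr)_j$; the leading coefficient of $W$ is therefore a nonzero multiple of a confluent Vandermonde (for $r=1$, $\lambda_0=\lambda_1$ it is $a_0(0)^2/(2\pi i)$). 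Note that the rescue comes from the $N/(2\pi i)$ couplings, not from the hypotheses $\lambda_\ell<\dim\QM_{w-2\ell}^{r-\ell}$ or any cyclicity of $\vec{G}$. For what it is worth, the paper's own proof is equally silent about $W\not\equiv0$, so you correctly isolated the crux; but having isolated it, your proof needed to close it rather than defer it.
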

\begin{proof}
  Under the assumptions of the proposition the term in the definition of $f_k$,
  which does not carry a positive power of $z$, has vanishing order
  $\lambda_k$ and by the ordering of the exponents $\lambda_\ell$, this is the
  order of vanishing of $f_k$ at $i\infty$ (except possibly for terms
  multiplied by $z$). The same holds for the derivatives $\partial_{w-r}^\ell
  f_k$. Thus the Wronskian $W$ vanishes at least to order
  $\lambda_0+\cdots+\lambda_r$ (the terms carrying a $z$ are eliminated by the
  determinant by Proposition~\ref{prop:wronski}). Thus it can be written as
  \begin{equation*}
    W=\Delta^{\lambda_0\cdots+\lambda_r}H
  \end{equation*}
  for a holomorphic form $H$. Comparing the weights gives \eqref{eq:vanish}.
\end{proof}
\begin{definition}
  Let $f$ be a quasimodular form of weight $w$ and depth $r$; thus there are
  quasimodular forms $g_\ell$ ($\ell=0,\ldots,r$) of weights $w-2\ell$
  and depth $r-\ell$ such that
  \begin{equation*}
    z^{-w}f(Sz)=f(z)+\sum_{\ell=1}^r\binom r\ell\frac1{z^\ell}g_\ell(z).
  \end{equation*}
  The form $f$ is called \emph{balanced}, if there are non-negative integers
  \begin{equation}
    \label{eq:lambda<dim}
    \lambda_\ell<\dim\QM_{w-2\ell}^{r-\ell}\quad\text{for }\ell=0,\ldots,r
  \end{equation}
and
  \begin{equation}\label{eq:balanced}
    \lambda_0\geq\lambda_1\geq\cdots\geq\lambda_r,
  \end{equation}
  such that
  \begin{equation*}
    g_\ell(z)=q^{\lambda_\ell}\sum_{n=0}^\infty a_\ell(n)q^n
  \end{equation*}
  with $a_\ell(0)\neq0$ for $\ell=0,\ldots,r$ and furthermore
  \begin{equation}\label{eq:sumlambda}
    \lambda_0+\cdots+\lambda_r=\dim\QM_w^r-1
  \end{equation}
  holds.
\end{definition}
\begin{remark}\label{rem:balanced}
  Let $\lambda_0\geq\lambda_\geq\ldots\geq\lambda_r$ be integers satisfying
  \eqref{eq:lambda<dim} and \eqref{eq:sumlambda}.  Define a linear map
  $\Phi:\QM_w^r\to\mathbb{C}^{\dim\QM_w^r-1}$, which maps $f$ to
  $(b_0(0),\ldots,b_0(\lambda_0-1),b_1(0),\ldots,b_1(\lambda_1-1),
  \ldots,b_r(0),\ldots,b_r(\lambda_r-1))$, where $b_\ell(n)$ is given by
  \begin{equation*}
    g_\ell(z)=\sum_{n=0}^\infty b_\ell(n)q^n
  \end{equation*}
  for the forms $g_\ell$ associated to $f$ by \eqref{eq:quasig}. This map has a
  non-trivial kernel by dimension considerations. Thus for $w$ large enough
  quasimodular forms $f$ of depth $r$ exist such that the corresponding forms
  $g_\ell$ have \emph{at least} vanishing orders $\lambda_\ell$ at $i\infty$
  ($\ell=0,\ldots,r$) under the restrictions \eqref{eq:lambda<dim},
  \eqref{eq:balanced}, and \eqref{eq:sumlambda}.
\end{remark}

\begin{remark}
  Extremal quasimodular forms as studied in
  \cite{Kaneko_Koike2006:extremal_quasimodular_forms} are special cases, namely
  $\lambda_1=\cdots=\lambda_r=0$, which gives the maximum possible order of
  vanishing of a quasimodular form of weight $w$ obtained by the argument given
  in Remark~\ref{rem:balanced}.
\end{remark}

\begin{remark}\label{rem:dim}
  Notice that for $r\leq4$ and $w(r+1)\equiv0\pmod{12}$
  \begin{equation*}
    \dim\QM_w^r-1=\frac{w(r+1)}{12}.
  \end{equation*}
  Thus there is equality in \eqref{eq:vanish} for balanced forms of depth
  $\leq4$.
\end{remark}
\begin{remark}
  Notice that as opposed to the situation studied in
  \cite{Mason2007:vector_valued_modular} the assumption \eqref{eq:balanced} on
  the ordering of the vanishing orders is a restriction in our
  case. Nevertheless, this restrictive condition will be satisfied in our later
  applications.
\end{remark}
\begin{remark}\label{rem:exist}
  Combining Proposition~\ref{prop:balanced} and Remarks~\ref{rem:balanced}
  and~\ref{rem:dim} shows that balanced quasimodular forms exist for $r\leq4$
  for any choice of $\lambda_0\geq\lambda_1\geq\cdots\geq\lambda_r\geq0$
  satisfying \eqref{eq:lambda<dim} and \eqref{eq:sumlambda}.
\end{remark}
\begin{theorem}\label{thm:diffeq}
  Every balanced quasimodular form of depth $r\leq4$ and weight $w$ with
  $(r+1)w\equiv0\pmod{12}$ is a solution of a modular differential equation of
  the form
  \begin{equation}\label{eq:quasi-diff}
    \partial_{w-r}^{r+1}f+a_4E_4\partial_{w-r}^{r-1}f+\cdots+
    a_{2r+2}E_{2r+2}f=0
  \end{equation}
  with $a_4,a_6,\ldots,a_{2r+2}\in\mathbb{Q}$.
\end{theorem}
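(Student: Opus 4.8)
The plan is to build a modular differential operator directly from a fundamental system attached to $f$ and then to identify its coefficients. Let $g_0,\dots,g_r$ be the forms associated to $f$ by \eqref{eq:quasig}, and set
\[
  f_\ell(z)=\sum_{m=0}^\ell\binom\ell m z^mg_{\ell-m}(z),\qquad \ell=0,\dots,r,
\]
as in \eqref{eq:fundamental}, so that $f=f_0$. Since the $z$-degree of $f_\ell$ is exactly $\ell$, its leading coefficient being $g_0=f\neq0$, the functions $f_0,\dots,f_r$ are linearly independent over $\C$. I would then take $L$ to be the unique monic differential operator of order $r+1$, written in iterated Serre derivatives $\partial_{w-r}$, whose kernel is $\langle f_0,\dots,f_r\rangle$: concretely $L[y]$ is the quotient by the modular Wronskian $W=W_{\vec F}$ of \eqref{eq:wronski} of the $(r+2)\times(r+2)$ determinant whose first column is $(y,\partial_{w-r}y,\dots,\partial_{w-r}^{r+1}y)^\trans$ and whose remaining columns carry the same data for $f_0,\dots,f_r$. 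By construction $f=f_0$ solves $L[y]=0$, and the coefficient $c_k$ of $\partial_{w-r}^{r+1-k}f$ (with $c_0=1$) is, up to sign, a minor of this matrix divided by $W$.

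The first key step is to pin down $W$. By Proposition~\ref{prop:wronski}, $W$ is a modular form of weight $w(r+1)$, and by the linear independence above $W\not\equiv0$. The computation in the proof of Proposition~\ref{prop:balanced} shows that $W$ vanishes at $i\infty$ to order at least $\lambda_0+\cdots+\lambda_r$, which by the balanced hypothesis together with Remark~\ref{rem:dim} equals $\tfrac{w(r+1)}{12}$. Hence $W/\Delta^{w(r+1)/12}$ is a holomorphic modular form of weight $0$, i.e.\ a nonzero constant, so $W=c\,\Delta^{w(r+1)/12}$ with $c\neq0$. In particular $W$ is nonvanishing on $\HH$, so every $c_k$ is holomorphic on $\HH$.

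Next I would show each $c_k$ is a holomorphic modular form of weight $2k$. By Proposition~\ref{prop:transf} the span $\langle f_0,\dots,f_r\rangle$ is invariant under $y\mapsto y(Tz)$ and $y\mapsto z^{r-w}y(Sz)$. Since $\partial_{w-r}$ commutes with the $T$-action and satisfies $\partial_{w-r}^k(z^{r-w}y(Sz))=z^{r-w-2k}(\partial_{w-r}^ky)(Sz)$ by Lemma~\ref{lem:serre}, applying $L$ to these transformed solutions, subtracting the identities $L[f_j]=0$, and using that the Wronskian matrix of $f_0,\dots,f_r$ is nonsingular to isolate the coefficient functions yields $c_k(Tz)=c_k(z)$ and $c_k(Sz)=z^{2k}c_k(z)$; thus $c_k$ transforms as a modular form of weight $2k$. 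Holomorphy at $i\infty$ follows because in each minor the entries of the column indexed by $f_j$ involve only $g_0,\dots,g_j$ and the $\lambda$'s decrease, so that column has $q$-order at least $\lambda_j$; hence each minor has $q$-order at least $\sum_j\lambda_j=\tfrac{w(r+1)}{12}$, the exact order of $W$, and the $T$-invariant (hence $z$-free) quotient $c_k$ is holomorphic at $i\infty$. Therefore $c_k\in\M_{2k}$. Now the hypothesis $r\leq4$ enters decisively: for $k=1,\dots,r+1\leq5$ the weights $2k\leq10$ satisfy $\dim\M_{2k}\leq1$, with $\M_2=\{0\}$. Thus $c_1=0$, eliminating the $\partial_{w-r}^rf$ term, and $c_k=a_{2k}E_{2k}$ for scalars $a_{2k}$, giving exactly the shape \eqref{eq:quasi-diff}.

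It remains to prove $a_{2k}\in\mathbb{Q}$, and here I would use the indicial equation \eqref{eq:indicial}. The Frobenius exponents of $L$ are precisely the integers $\lambda_0,\dots,\lambda_r$, so the degree-$(r+1)$ indicial polynomial factors as $p(x)=\prod_{i=0}^r(x-\lambda_i)$ and has integer coefficients. On the other hand, since $E_{2k}(i\infty)=1$ we have $B_{2\ell}(i\infty)=a_{2\ell}$ (with $a_0=1$), so \eqref{eq:indicial} reads $p(x)=\sum_{\ell=0}^{r+1}a_{2\ell}\,q_{r+1-\ell}(x,w)$. Because the $q_j(x,w)$ are polynomials of distinct degrees with rational coefficients, they form a triangular rational basis of the polynomials of degree $\leq r+1$; expanding the rational polynomial $p$ in this basis forces each $a_{2k}$ to be rational. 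The main obstacle is the intermediate step showing that the $c_k$ are genuinely \emph{holomorphic} modular forms — precisely where the balanced hypothesis is indispensable, as it forces $W=c\,\Delta^{w(r+1)/12}$ and hence nonvanishing of $W$ on $\HH$ — while the restriction $r\leq4$ is what then collapses each coefficient space $\M_{2k}$ to a single Eisenstein series.
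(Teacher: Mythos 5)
Your proposal is correct, but it takes a genuinely different route from the paper. The paper argues in the opposite direction: it first \emph{defines} the candidate operator $K$ by choosing $a_4,\dots,a_{2r+2}$ so that the indicial equation \eqref{eq:indicial} has the prescribed roots $\lambda_0,\dots,\lambda_r$ (rationality is immediate there, by the same triangular-basis observation you use at the end), and then shows $Kf=0$ by a vanishing-order contradiction: $\phi=Kf$ is quasimodular of weight $w+2r+2$ and depth $\leq r$ by Lemma~\ref{lem:quasi-serre}, its associated orders of vanishing sum to $\sum_\ell(\lambda_\ell+1)=\frac{(w+12)(r+1)}{12}>\frac{(w+2r+2)(r+1)}{12}$ precisely because $2r+2<12$, and this violates the bound of Proposition~\ref{prop:balanced} unless $\phi\equiv0$. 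You instead construct the annihilating operator from the fundamental system via the Wronskian (in the style of Mason's vector-valued theory, paralleling Proposition~\ref{prop:normal}), prove its coefficients $c_k$ are holomorphic modular forms of weight $2k$, and let $\dim\M_{2k}\leq1$ with $\M_2=\{0\}$ collapse them to $a_{2k}E_{2k}$. Your route buys more: it \emph{derives} rather than postulates the Eisenstein shape of \eqref{eq:quasi-diff}, and in particular explains the absence of the $\partial_{w-r}^r$ term as forced by $\M_2=\{0\}$; the paper's route is shorter, avoids the coefficient analysis entirely, and makes the role of $r\leq4$ more transparent (it is exactly what makes the weight count fail for $\phi$). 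Three steps you assert without full detail are true but deserve flagging: (i) the identification of the Frobenius exponents of $L$ with $\lambda_0,\dots,\lambda_r$ \emph{with multiplicity} requires the standard cascade argument on the triangular log-structure of $f_0,\dots,f_r$ (comparing coefficients of powers of $z$ and of $q$, including the resonant case of repeated $\lambda$'s); (ii) holomorphy of $c_k$ at $i\infty$ needs the remark that the $z$-powers in the minors contribute only subexponential growth, so a $T$-invariant quotient bounded by $q^{\sum\lambda_j}$-order numerators over $W=c\,\Delta^{w(r+1)/12}$ has no negative Fourier coefficients; and (iii) the modular covariance $c_k(Sz)=z^{2k}c_k(z)$ rests on uniqueness of the monic order-$(r+1)$ annihilator of the span, which holds wherever $W\neq0$ and extends by analyticity. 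With these standard points filled in, the argument is complete and uses the balanced hypothesis exactly where you say it enters, namely to force $W=c\,\Delta^{w(r+1)/12}$ via Remark~\ref{rem:dim}.
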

\begin{proof}
  Let $f$ be a form satisfying the assumptions of the theorem. Then choose
  $a_4,\ldots,a_{2r+2}$ so that the indicial equation of \eqref{eq:quasi-diff}
  \begin{equation}\label{eq:index}
    \begin{split}
      &\left(\lambda-\frac{w+r}{12}\right)\left(\lambda-\frac{w+r-2}{12}\right)
      \cdots\left(\lambda-\frac{w-r}{12}\right)\\
      &+a_4\left(\lambda-\frac{w+r-4}{12}\right)\cdots
      \left(\lambda-\frac{w-r}{12}\right)+\cdots +a_{2r+2}=0
    \end{split}
  \end{equation}
  has solutions $\lambda_0,\ldots,\lambda_r$ (counted with multiplicity). The
  $\lambda^r$-term comes from the first summand and has coefficient
  $-\frac{w(r+1)}{12}$, which is an integer by assumption. Thus we have
  \begin{equation*}
    \lambda_0+\cdots+\lambda_r=\frac{w(r+1)}{12}=\dim\QM_w^r-1
  \end{equation*}
  by \eqref{eq:dim-qm}.
  The coefficients
  $a_4,\ldots,a_{2r+2}$ are then all rational.

  With this choice of coefficients define the differential operator
  \begin{equation*}
    K=\partial_{w-r}^{r+1}+a_4E_4\partial_{w-r}^{r-1}+\cdots+a_{2r+2}E_{2r+2}.
  \end{equation*}
  By Lemma~\ref{lem:quasi-serre}, $\phi=Kf$ is then a quasimodular form of
  weight $w+2r+2$ and depth $\leq r$. Following Definition~\ref{def:quasi-vec}
  we define the functions $f_\ell$ ($\ell=0,\ldots,r$) from $f$ and set
  \begin{equation*}
    \phi_\ell=Kf_\ell.
  \end{equation*}
  Then we have
  \begin{equation*}
    z^{-r-w-2}\phi_\ell(Sz)=K\left(z^{r-w}f_\ell(Sz)\right)=
    (-1)^\ell Kf_{r-\ell}(z)=(-1)^\ell \phi_{r-\ell}(z)
  \end{equation*}
  and
  \begin{equation*}
    \phi_\ell(Tz)=K\left(f_\ell(Tz)\right)=\sum_{k=0}^\ell\binom\ell k Kf_k(z)=
    \sum_{k=0}^\ell\binom\ell k \phi_k(z),
  \end{equation*}
  which shows that $(\phi_0,\ldots,\phi_r)^\trans$ satisfies
  \eqref{eq:F-transS} and \eqref{eq:F-transT} and is thus a quasimodular vector
  of weight $w+2r+2$.

From the fact that the indicial equation of $K$ has the roots
$\lambda_0,\ldots,\lambda_r$ it follows that
\begin{equation*}
  \phi_\ell(z)=Kf_\ell(z)=\mathcal{O}(z^{\mu-1} q^{\lambda_\ell+1}),
\end{equation*}
where $\mu$ is the multiplicity of $\lambda_\ell$. Thus $\phi$ is a
quasimodular form of weight $w+2r+2$ and depth $\leq r$, such that the
corresponding orders of vanishing sum up to
\begin{equation*}
  (\lambda_0+1)+\cdots+(\lambda_r+1)=\frac{(w+12)(r+1)}{12}
  >\frac{(w+2r+2)(r+1)}{12}.
\end{equation*}
By Proposition~\ref{prop:balanced} this shows that $\phi$ has to vanish
identically and $f$ is a solution of $Kf=0$.
\end{proof}
\begin{remark}
  Since for every $r$ the sum of the solutions of the indicial equation equals
  $\frac{w(r+1)}{12}$, which is larger than the dimension of the space
  $\dim\QM_w^r$ for $r\geq5$, the assertion of the theorem is false for
  $r\geq5$: no quasimodular form of depth $\geq5$ is the solution of a
  normalized differential equation of the form \eqref{eq:quasi-diff}.
\end{remark}
\begin{remark}
  The functions $f,f_1,\ldots,f_r$ are the functions that would be obtained by
  solving \eqref{eq:quasi-diff} using the Frobenius ansatz (see
  \cite{Henrici1977:applied_computational_complex,
    Teschl2012:ordinary_differential_equations,
    Hille1976:ordinary_differential_equations}) in this order.
\end{remark}
Theorem~\ref{thm:diffeq} has an obvious converse.
\begin{theorem}\label{thm:solutions}
  Let $r\leq4$ be a natural number, $w$ such that $w(r+1)\equiv0\pmod{12}$, and
  $a_4,a_6,\ldots,a_{2r+2}$ be rational numbers such that the equation
  \eqref{eq:index} has only non-negative integer solutions
  $\lambda_0\geq\lambda_1\geq\cdots\geq\lambda_r$. Then the solution of the
  differential equation
  \begin{equation}\label{eq:diffeq}
    \partial_{w-r}^{r+1}f+a_4E_4\partial_{w-r}^{r-1}f+\cdots+a_{2r+2}E_{2r+2}f=0
  \end{equation}
  with $q$-expansion
  \begin{equation}\label{eq:quasi-solution}
    f(z)=q^{\lambda_0}\sum_{n=0}^\infty a_0(n)q^n,\quad a_0(0)=1
  \end{equation}
  is a balanced quasimodular form of weight $w$ and depth $r$, if at least one
  of the inequalities in \eqref{eq:balanced} is strict. If
  $\lambda_0=\cdots=\lambda_r$ (which implies that $w\equiv0\pmod{12}$), then
  the functions $z^\ell\Delta^{\frac w{12}}$ ($\ell=0,\ldots,r$) form a
  fundamental system of \eqref{eq:diffeq}.
\end{theorem}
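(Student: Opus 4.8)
The proof should be short: Theorem~\ref{thm:diffeq} is described above as having an ``obvious converse,'' and the natural reading is that the equation is determined by its Frobenius exponents and then matched to an already-constructed balanced form. The key preliminary observation is that the coefficients $a_4,\ldots,a_{2r+2}$ are uniquely determined by the prescribed roots $\lambda_0,\ldots,\lambda_r$ of \eqref{eq:index}. Indeed, expanding \eqref{eq:index}, the leading product alone contributes the $\lambda^{r+1}$-coefficient $1$ and the $\lambda^{r}$-coefficient $-\frac{w(r+1)}{12}$, whereas $a_4,a_6,\ldots$ enter the coefficients of $\lambda^{r-1},\lambda^{r-2},\ldots$ triangularly; so fixing all $\lambda_i$ fixes all $a_i$. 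In particular the equation forces $\lambda_0+\cdots+\lambda_r=\frac{w(r+1)}{12}=\dim\QM_w^r-1$ by Remark~\ref{rem:dim}, which is exactly \eqref{eq:sumlambda}.

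Assume now that not all $\lambda_i$ coincide. The first step is to check that \eqref{eq:lambda<dim} holds, i.e.\ $\lambda_\ell<\dim\QM_{w-2\ell}^{r-\ell}$ for each $\ell$. This is a finite verification (legitimate because $r\leq4$) using Proposition~\ref{prop:dim} and the integrality of the $\lambda_i$: I expect it to show that the only way equality $\lambda_\ell=\dim\QM_{w-2\ell}^{r-\ell}$ can be forced for integer exponents with sum $\frac{w(r+1)}{12}$ is the fully degenerate configuration $\lambda_0=\cdots=\lambda_r$, which is excluded here. Granting \eqref{eq:lambda<dim}, together with \eqref{eq:balanced} and \eqref{eq:sumlambda}, Remark~\ref{rem:exist} supplies a balanced quasimodular form $\tilde f$ of weight $w$ and depth $r$ whose associated forms $g_\ell$ vanish to orders exactly $\lambda_\ell$. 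By Theorem~\ref{thm:diffeq}, $\tilde f$ satisfies a differential equation \eqref{eq:quasi-diff} whose indicial equation has precisely the roots $\lambda_0,\ldots,\lambda_r$; by the uniqueness from the previous paragraph its coefficients must be the prescribed $a_i$, so $\tilde f$ solves the very equation \eqref{eq:diffeq}.

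It remains to identify $f$ with $\tilde f$. Normalizing $\tilde f$ so that its leading Fourier coefficient is $1$, and using that $\tilde f=g_0$ vanishes to order $\lambda_0$, it has the expansion \eqref{eq:quasi-solution}. But that power-series solution is unique: inserting $q^{\lambda_0}\sum_{n\geq0}a_0(n)q^n$ into \eqref{eq:diffeq} and comparing coefficients of $q^{\lambda_0+n}$ determines $a_0(n)$ from the earlier coefficients, the relevant recursion coefficient being the value of the indicial polynomial at $\lambda_0+n$, which is nonzero for $n\geq1$ since $\lambda_0$ is the largest root. Hence $f=\tilde f$, so $f$ is a balanced quasimodular form of weight $w$ and depth $r$. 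The main obstacle of the whole argument is the verification of \eqref{eq:lambda<dim}: it is what makes Remark~\ref{rem:exist} applicable and, via the balanced form having depth $r$, simultaneously pins the depth of $f$ to $r$.

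Finally, the degenerate case $\lambda_0=\cdots=\lambda_r$: by \eqref{eq:sumlambda} the common value equals $\frac{w}{12}$, so $12\mid w$. The form $\Delta^{w/12}$ is modular of weight $w$ with $q$-expansion $q^{w/12}(1+\cdots)$, and from $\Delta^{w/12}(Sz)=z^{w}\Delta^{w/12}(z)$ together with the binomial expansion of $(z+1)^\ell$ one sees that the span $V=\langle z^\ell\Delta^{w/12}:\ell=0,\ldots,r\rangle$ is invariant under $f\mapsto z^{r-w}f(Sz)$ and $f\mapsto f(Tz)$. By Lemma~\ref{lem:serre} the unique monic order-$(r+1)$ operator in $\partial_{w-r}$ annihilating $V$ has modular coefficients, hence is of the form \eqref{eq:diffeq}; since every element of $V$ has $q$-order $\frac{w}{12}$, all its indicial roots equal $\frac{w}{12}$, so by the uniqueness of the first paragraph it coincides with the operator of the theorem. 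Thus the $r+1$ linearly independent functions $z^\ell\Delta^{w/12}$ are a fundamental system of \eqref{eq:diffeq}.
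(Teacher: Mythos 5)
Your handling of the non-degenerate case is essentially the paper's own proof: the paper likewise takes the balanced form $g$ with the prescribed exponents from Remark~\ref{rem:exist}, applies Theorem~\ref{thm:diffeq}, notes that the exponents determine $a_4,\ldots,a_{2r+2}$ uniquely (your triangularity observation), and concludes $g=cf$ because $f$ is the solution singled out by the leading exponent $\lambda_0$. You are in fact more careful than the paper on two points: you justify uniqueness of the Frobenius solution (the indicial polynomial is nonzero at $\lambda_0+n$ for $n\geq1$ since $\lambda_0$ is the largest root), and you flag that \eqref{eq:lambda<dim} must hold before Remark~\ref{rem:exist} applies --- the paper passes over this silently, even though \eqref{eq:lambda<dim} is not among the theorem's hypotheses. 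Your expectation is correct and the check is indeed finite: from $(\ell+1)\lambda_\ell\leq\lambda_0+\cdots+\lambda_\ell\leq\frac{w(r+1)}{12}$ and Proposition~\ref{prop:dim} one handles $\ell<r$, and for $\ell=r$ one finds $\lambda_r\geq\dim\M_{w-2r}$ forces $(r+1)\lambda_r\geq\frac{w(r+1)}{12}$, i.e.\ all exponents equal, which is excluded. You should carry this out rather than write ``I expect''; as it stands this is the one unproved ingredient of your first part.

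For the degenerate case your route is genuinely different from the paper's, and it contains the only step I would call a gap. The paper constructs the annihilating operator directly: it writes $\partial_{w-r}^k\Delta^{\frac w{12}}=\Delta^{\frac w{12}}Q_k^{(r)}$ with the recursion $Q_k^{(r)}=\frac{r+1-k}{12}E_2Q_{k-1}^{(r)}+\partial_{k-1}Q_{k-1}^{(r)}$, uses the depth drop at $k=r+1$ to eliminate powers of $E_2$ and obtain \emph{manifestly holomorphic} modular coefficients $B_4,\ldots,B_{2r+2}$, and then gets the fundamental system from $z^{r-w}\Delta^{\frac w{12}}(Sz)=z^r\Delta^{\frac w{12}}$ by applying $T$ and taking differences. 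You instead start from the span $V$ of the $z^\ell\Delta^{\frac w{12}}$ and invoke the unique monic order-$(r+1)$ operator annihilating $V$. That can be made to work, but your phrase ``has modular coefficients, hence is of the form \eqref{eq:diffeq}'' hides the real content: the coefficients are quotients of modified Wronskians by $W=c\Delta^{\frac{w(r+1)}{12}}$, so nonvanishing of $\Delta$ gives holomorphy on $\HH$, but holomorphy \emph{at $i\infty$} still needs an argument --- for instance, each modified Wronskian is a finite sum $\sum_m z^mH_m(q)$ with $\mathrm{ord}_qH_m\geq\frac{w(r+1)}{12}$, and its $T$-invariance kills all terms with $m\geq1$. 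Without cusp-holomorphy the coefficients could be weakly holomorphic, and then neither the vanishing of the weight-$2$ coefficient (via $\M_2=0$) nor the shape $a_{2k}E_{2k}$ (via $\dim\M_{2k}=1$ for $4\leq2k\leq10$, which is precisely where $r\leq4$ enters) is forced, and your final identification through the indicial roots collapses. The paper's explicit $Q_k^{(r)}$ construction buys exactly this holomorphy for free; your Wronskian route is conceptually cleaner but only after that lemma is supplied.
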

\begin{proof}
  Define $g$ to be the balanced quasimodular form with exponents
  $\lambda_0\geq\cdots\geq\lambda_r$. Such a form exists by
  Remark~\ref{rem:exist}. Then by Theorem~\ref{thm:diffeq} $g$ is
  the solution of the differential equation \eqref{eq:quasi-diff}. Since the
  exponents uniquely define the coefficients $a_4,\ldots,a_{2r+2}$ this is the
  same equation as \eqref{eq:diffeq}. Then $g$ is the solution of
  \eqref{eq:diffeq} characterized by vanishing order $\lambda_0$ at
  $i\infty$. Thus $g=cf$ for some $c\in\mathbb{C}\setminus\{0\}$ and $f$ is a
  quasimodular form. From Proposition~\ref{prop:solution} we know that
  $f_0=f,f_1,\ldots,f_r$ (with the notation of Definition~\ref{def:quasi-vec})
  form a fundamental system of \eqref{eq:quasi-diff}. This shows the first
  assertion of the Theorem.
  
  It only remains to show that $\Delta^{\frac w{12}}$ is the solution of a
  normalized quasimodular differential equation for any $r\geq0$ with
  coefficients chosen so that $\lambda_0=\frac w{12}$ is an $(r+1)$-fold zero of
  the indicial equation. For this purpose we observe that
  \begin{equation*}
    \partial_{w-r}^k\Delta^{\frac w{12}}=\Delta^{\frac w{12}}Q_k^{(r)},
  \end{equation*}
  where $Q_k^{(r)}$ is a quasimodular form of weight $2k$ and depth $\leq
  k$. These forms satisfy the recurrence relation
  \begin{equation*}
    Q_k^{(r)}=\frac{r+1-k}{12}E_2Q_{k-1}^{(r)}+\partial_{k-1}Q_{k-1}^{(r)}
  \end{equation*}
  with initial condition $Q_0^{(k)}=1$. This recursion shows that the depth
  increases with $k$, except for $k=r+1$, where the first
  term vanishes. Thus $Q_{r+1}^{(r)}$ has depth $\leq r$; indeed it has depth
  $r-1$, since there is no quasimodular form of weight $2r+2$ and depth
  $r$. By successively subtracting the highest power of $E_2$ we obtain modular
  forms $B_4,B_6,\ldots,B_{2r}$ such that
  \begin{equation*}
    Q_{r+1}^{(r)}+B_4Q_{r-1}^{(r)}+\cdots+B_{2r}Q_1^{(r)}+B_{2r+2}Q_0^{(r)}=0.
  \end{equation*}
  Now take $\mathbf{B}=(1,0,B_4,\ldots,B_{2r})$.  The corresponding modular
  differential operator $K_{\mathbf{B}}$ then annihilates
  $\Delta^{\frac w{12}}$. A fundamental system of solutions of
  $K_{\mathbf{B}}f=0$ is given by
  \begin{equation*}
    \Delta^{\frac w{12}}, z\Delta^{\frac w{12}},\ldots, z^{r}\Delta^{\frac w{12}},
  \end{equation*}
  which can be derived from the fact that
  $z^{r-w}\Delta^{\frac w{12}}(Sz)=z^r\Delta^{\frac w{12}}$ is also a solution
  by the invariance properties of $K_{\mathbf{B}}$. The other elements of the
  fundamental system can be found by applying $T$ and taking differences.
\end{proof}
\begin{remark}
  The proof shows that for every $r\geq0$ there is a linear differential
  equation \eqref{eq:diffeq} such that the functions
  $z^\ell\Delta^{\frac w{12}}$, $\ell=0,\ldots,r$ form a fundamental system of
  solutions.
\end{remark}
\section{Modular differential equations for quasimodular
  forms of depth $\leq4$}\label{sec:modul-diff-equat}
In this section we discuss the consequences of Theorems~\ref{thm:diffeq}
and~\ref{thm:solutions} for finding balanced quasimodular forms of depths
$r\leq4$ also for weights $w$, which do not satisfy
$w(r+1)\equiv0\pmod{12}$. We include the case $r=0$ for completeness, even if
the results are rather trivial (see Table~\ref{tab:depth0}).

In this section we will use the notation
\begin{equation}
  \label{eq:ominus}
  (\lambda_0,\lambda_1,\ldots,\lambda_r)\ominus1
\end{equation}
for to denote a new set of exponents satisfying the order condition
\eqref{eq:balanced}, but with one exponent diminished by $1$.

\begin{table}[h]
  \renewcommand{\arraystretch}{1.3}
  \centering
    \caption{The forms and differential equations for $r=0$}
  \label{tab:depth0}
  \begin{tabular}[h]{|l|l|r|}
    \hline
    $w$&$f$&differential equation\\[1mm]\hline
    $0\pmod{12}$&$\Delta^{\frac w{12}}$&$\partial_wf=0$\\[1mm]\hline
    $2\pmod{12}$&$E_4^2E_6\Delta^{\frac {w-14}{12}}$&
    $E_4E_6\partial_wf+\frac16(3E_4^3+4E_6^2)f=0$\\[1mm]\hline
    $4\pmod{12}$&$E_4\Delta^{\frac {w-4}{12}}$&
    $E_4\partial_wf+\frac13E_6f=0$\\[1mm]\hline
    $6\pmod{12}$&$E_6\Delta^{\frac {w-6}{12}}$&
    $E_6\partial_wf+\frac12E_4^2f=0$\\[1mm]\hline
    $8\pmod{12}$&$E_4^2\Delta^{\frac {w-8}{12}}$&
    $E_4\partial_wf+\frac23E_6f=0$\\[1mm]\hline
    $10\pmod{12}$&$E_4E_6\Delta^{\frac {w-10}{12}}$&
    $E_4E_6\partial_wf+\frac16(3E_4^3+2E_6^2)f=0$\\[1mm]\hline
  \end{tabular}
\end{table}

\subsection{Depth $1$}
In this case the dimension of the space of quasimodular forms is given by
\begin{equation}
  \label{eq:dim-depth1}
  \dim\QM_w^1=\left\lfloor\frac w6\right\rfloor+1.
\end{equation}
\begin{theorem}\label{thm:depth1}
  Let $w\equiv a\pmod6$ ($a=0,2,4$) and let
  $\frac{w-a}{12}\leq\lambda\leq\frac{w-a}6$. Let
  \begin{equation*}
    f(z)=q^\lambda\sum_{n=0}^\infty a(n)q^n.
  \end{equation*}
  \begin{description}\setlength{\itemindent}{-\leftmargin}
  \item[\underline{$a=0$}] If $f$ is a solution of
    \begin{equation}
      \label{eq:depth1-0}
    \partial_{w-1}^2f_w-\frac{(12\lambda-w-1)(12\lambda-w+1)}{144}
    E_4f_w=0,
  \end{equation}
  then $f$ is a balanced quasimodular form of weight $w$ and depth $1$, if
  $\lambda>\frac w{12}$. If $\lambda=\frac w{12}$, the functions
  $\Delta^{\frac w{12}}$ and $z\Delta^{\frac w{12}}$ form a fundamental system
  of \eqref{eq:depth1-0}.
\item[\underline{$a=2$}] If $f$ is a solution of
  \begin{equation}
    \label{eq:depth1-2}
    E_4\partial_{w-1}^2f_w+\frac13E_6\partial_{w-1}f_w-
    \frac{(12\lambda-w+1)(12\lambda-w+3)}{144}E_4^2f_w=0
  \end{equation}
   then $f$ is a balanced quasimodular form of weight $w$ and depth $1$, if
   $\lambda>\frac{w-2}{12}$. If $\lambda=\frac {w-2}{12}$, the functions
   $E_2\Delta^{\frac {w-2}{12}}$ and 
  $zE_2\Delta^{\frac {w-2}{12}}+\frac6{\pi i}\Delta^{\frac{w-2}{12}}$ form a
  fundamental system of \eqref{eq:depth1-2}.
 \item[\underline{$a=4$}] If $f$ is a solution of
  \begin{equation}
    \label{eq:depth1-4}
    \begin{split}
      &E_4^2\partial_{w-1}^2f_w+\frac23E_4E_6\partial_{w-1}f_w\\
      &-
      \left(\left(\frac{(12\lambda-w+3)(12\lambda-w+5)}{144}-
          \frac1{18}\right)E_4^3+384\Delta\right)f_w=0
      \end{split}
  \end{equation}
  then $f$ is a balanced quasimodular form of weight $w$ and depth $1$, if
  $\lambda>\frac{w-4}{12}$. If $\lambda=\frac {w-4}{12}$, the functions
  $E_4\Delta^{\frac {w-4}{12}}$ and
  $zE_4\Delta^{\frac {w-4}{12}}$ form a fundamental system of \eqref{eq:depth1-4}.
  \end{description}
\end{theorem}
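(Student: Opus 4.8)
The plan is to treat the three residue classes in parallel, reducing each to the Frobenius machinery of Section~\ref{sec:basics} together with the existence of balanced forms (Remark~\ref{rem:exist}) and the vanishing bound of Proposition~\ref{prop:balanced}. First I would record the Frobenius exponents. Inserting the ansatz into the indicial equation \eqref{eq:index} for $r=1$ and completing the square, each of \eqref{eq:depth1-0}, \eqref{eq:depth1-2}, \eqref{eq:depth1-4} collapses to
\begin{equation*}
  (12x-w+a)^2=(12\lambda-w+a)^2,
\end{equation*}
so that the two exponents are $\lambda_0=\lambda$ and $\lambda_1=\frac{w-a}{6}-\lambda$. Both are integers since $6\mid(w-a)$, both are non-negative by the assumed range $\frac{w-a}{12}\le\lambda\le\frac{w-a}6$, they sum to $\frac{w-a}6=\lfloor w/6\rfloor=\dim\QM_w^1-1$ by \eqref{eq:dim-depth1}, and they coincide exactly when $\lambda=\frac{w-a}{12}$.

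For $a=0$ there is nothing further to prove: $w\equiv0\pmod6$ forces $2w\equiv0\pmod{12}$, so \eqref{eq:depth1-0} is already a normalized equation \eqref{eq:diffeq} with $r=1$, and the two assertions are a verbatim instance of Theorem~\ref{thm:solutions}. The cases $a=2,4$ lie outside Theorem~\ref{thm:solutions} because $2w\not\equiv0\pmod{12}$, so here I would mimic the proof of Theorem~\ref{thm:diffeq} for the non-normalized operator $K$ given by the left-hand side of \eqref{eq:depth1-2}, resp.\ \eqref{eq:depth1-4}. By Remark~\ref{rem:exist} choose the balanced form $g$ of weight $w$ and depth $1$ with exponents $(\lambda_0,\lambda_1)$, build the associated $f_\ell$ as in Definition~\ref{def:quasi-vec}, and set $\phi_\ell=Kf_\ell$. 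Lemma~\ref{lem:quasi-serre} shows $\phi:=\phi_0=Kg$ is quasimodular of depth $\le1$ and weight $w+8$ for $a=2$, resp.\ $w+12$ for $a=4$; the $S$- and $T$-equivariance of $K$ (Lemmas~\ref{lem:serre},~\ref{lem:invar} and Proposition~\ref{prop:transf}) makes $(\phi_0,\phi_1)^\trans$ a quasimodular vector, and since $\lambda_0,\lambda_1$ are the indicial roots of $K$, the two forms attached to $\phi$ vanish at $i\infty$ to orders at least $\lambda_0+1$ and $\lambda_1+1$.

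For $a=2$ this closes the argument: the orders sum to $\frac{w+10}6$, which exceeds the Proposition~\ref{prop:balanced} bound $\frac{2(w+8)}{12}=\frac{w+8}6$ for a nonzero quasimodular form of weight $w+8$, forcing $\phi\equiv0$, i.e.\ $Kg=0$. Then $g$ and the power-series solution $f$ share the leading exponent $\lambda_0$ of the same second-order equation, so $f$ is a constant multiple of $g$ and hence balanced of depth $1$, and Proposition~\ref{prop:solution} supplies the fundamental system. The genuine obstacle is $a=4$, where the same count gives a sum $\frac{w+8}6$ that only \emph{meets}, but does not exceed, the bound $\frac{w+12}6$. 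This is precisely why a new term is needed. The constant coefficient now lives in the two-dimensional space $\M_{12}=\langle E_4^3,\Delta\rangle$, and the indicial equation fixes only its value at $i\infty$, i.e.\ the coefficient of $E_4^3$, leaving the coefficient of $\Delta$ free. I would exploit this freedom to annihilate the $q^{\lambda_0+1}$-coefficient of $\phi$, a single linear condition whose solution is the value $384$ occurring in \eqref{eq:depth1-4}; with that choice the top form attached to $\phi$ vanishes to order $\ge\lambda_0+2$, the orders now sum to $\frac{w+14}6>\frac{w+12}6$, and Proposition~\ref{prop:balanced} again forces $\phi\equiv0$, after which the conclusion follows as before. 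The degenerate cases $\lambda=\frac{w-a}{12}$ (possible only for $w\equiv a\pmod{12}$) I would settle directly: the balanced form with $\lambda_0=\lambda_1$ is $E_2\Delta^{(w-2)/12}$ for $a=2$ and the \emph{modular} form $E_4\Delta^{(w-4)/12}$ for $a=4$, and its Frobenius companion $zg_0+g_1$ yields the stated second solution, with the correction $\frac6{\pi i}\Delta^{(w-2)/12}$ for $a=2$ and no correction for $a=4$, the latter because $E_4\Delta^{(w-4)/12}$ has depth $0$.

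The point I expect to be hardest is exactly the $a=4$ borderline: one must verify not merely that \emph{some} value of the $\Delta$-coefficient restores the strict inequality, but that the single value $384$ works simultaneously for every admissible $\lambda$. This requires explicitly computing the $q^{\lambda_0+1}$-obstruction and checking that it is independent of $\lambda$, which is the one genuinely computational step of the proof.
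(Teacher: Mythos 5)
You take a genuinely different route from the paper, and it is worth recording what each buys. The paper disposes of $a=2,4$ by \emph{transfer}: if $g$ solves \eqref{eq:depth1-0} in weight $w-2$, then $\partial_{w-3}g$ solves \eqref{eq:depth1-2}, and if $g$ solves \eqref{eq:depth1-0} in weight $w-4$, then $E_4g$ solves \eqref{eq:depth1-4}; since neither the Serre derivative nor multiplication by $E_4$ changes the vanishing orders here, balancedness is inherited, and the admissible range of $\lambda$ in weight $w-2$ (resp.\ $w-4$) matches the range in weight $w$ exactly. You instead re-run the annihilation argument of Theorem~\ref{thm:diffeq} directly on the non-normalized operators. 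Your Frobenius-exponent computation $(\lambda_0,\lambda_1)=(\lambda,\frac{w-a}6-\lambda)$ is correct in all three cases, and for $a=2$ your count is sound: the orders attached to $Kg$ sum to at least $\frac{w-2}6+2=\frac{w+10}6$, which exceeds the Proposition~\ref{prop:balanced} bound $\frac{2(w+8)}{12}=\frac{w+8}6$ in weight $w+8$, forcing $Kg\equiv0$ (with the same unexamined degeneracies --- e.g.\ $Kg$ of depth $0$, or unordered actual orders --- that the paper's own proof of Theorem~\ref{thm:diffeq} glosses over, so you are at parity there). One numerical slip: for $a=4$ the sum $\frac{w-4}6+2=\frac{w+8}6$ is \emph{strictly below} the bound $\frac{2(w+12)}{12}=\frac{w+12}6$, not equal to it; your conclusion that no contradiction arises is nevertheless right.

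The genuine gap is exactly where you flag it, and it is more serious than a deferred computation. Your argument for $a=4$ shows that for each admissible pair $(w,\lambda)$ there is \emph{some} constant $c^*(w,\lambda)$ such that replacing $384$ by $c^*$ annihilates the balanced form $g$: the $q^{\lambda_0+1}$-coefficient of $K_cg$ is $P(\lambda_0+1)\,a(1)+\beta(w,\lambda)\,a(0)-c\,a(0)$, where $P$ is the indicial polynomial and $P(\lambda_0+1)\neq0$, so it can always be killed. But the theorem asserts $c^*=384$ uniformly, and your proposed verification --- ``explicitly computing the $q^{\lambda_0+1}$-obstruction and checking it is independent of $\lambda$'' --- is not available inside your framework: the obstruction depends on the second Fourier coefficient $a(1)$ of $g$, and $g$ is furnished only by the abstract kernel argument of Remark~\ref{rem:exist}, which does not determine $a(1)$; normally $a(1)$ is \emph{computed from} the differential equation you are trying to establish, so the plan as stated is circular. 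The clean way to pin the constant is the paper's: substitute $f=E_4g$ with $\partial_{w-5}^2g=\frac{(12\lambda-w+3)(12\lambda-w+5)}{144}E_4g$ into \eqref{eq:depth1-4}; the $\partial_{w-5}g$-terms cancel and the remainder is $\frac29E_4\left(E_4^3-E_6^2\right)g-384\,\Delta E_4g$, which vanishes by \eqref{eq:Delta} --- this is where $384=\frac29\cdot1728$ comes from, manifestly independent of $\lambda$ and $w$. With that one identity inserted, your argument closes (uniqueness of the exponent-$\lambda_0$ Frobenius solution, valid since $\lambda_0>\lambda_1$, then identifies the solution $f$ with the balanced form, as you do for $a=2$), and your treatment of the degenerate cases $\lambda=\frac{w-a}{12}$, including the correction term $\frac6{\pi i}\Delta^{\frac{w-2}{12}}$ for $a=2$ and its absence for $a=4$ because $E_4\Delta^{\frac{w-4}{12}}$ has depth $0$, agrees with the paper.
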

\begin{proof}
For $a=0$ this is the statement of Theorem~\ref{thm:solutions}.

  For $a=2$, we take $g$ to be solution of \eqref{eq:depth1-0} for $w-2$. Then
  $f=\partial_{w-3}g$ is a solution of \eqref{eq:depth1-2}. Similarly, for
  $a=4$ we take $g$ to be a solution of \eqref{eq:depth1-0} for $w-4$. Then
  $E_4g$ is a solution of \eqref{eq:depth1-4}. In both cases, neither
  application of the Serre derivative, nor multiplication by $E_4$ change the
  vanishing orders. Thus the resulting forms are still balanced.
\end{proof}

\subsection{Depth $2$}
In this case the dimension of the space $\QM_w^2$ is given by
\begin{equation}\label{eq:dim-depth2}
  \dim\QM_w^2=\left\lfloor\frac w4\right\rfloor+1.
\end{equation}
\begin{theorem}\label{thm:depth2}
  Let $w\equiv a\pmod4$ ($a=0,2$) and let
  \begin{equation}\label{eq:depth2-lambda}
    \lambda_0\geq\lambda_1\geq\lambda_2
  \end{equation}
  be positive integers with $\lambda_0+\lambda_1+\lambda_2=\frac{w-a}4$ and set
  \begin{align*}
  A&=\frac1{144}\left(4-3(w-a)^2\right)+\lambda_0\lambda_1+\lambda_0\lambda_2+
    \lambda_1\lambda_2\\
  B&=-\left(\lambda_0-\frac{w-a-2}{12}\right)\left(\lambda_1-
    \frac{w-a-2}{12}\right)
  \left(\lambda_2-\frac{w-a-2}{12}\right)
\end{align*}
  Let
  \begin{equation*}
    f(z)=q^{\lambda_0}\sum_{n=0}^\infty a(n)q^n.
  \end{equation*}
  \begin{description}\setlength{\itemindent}{-\leftmargin}
  \item[\underline{$a=0$}] If $f$ is a solution of
    \begin{equation}\label{eq:depth2-0}
  \partial_{w-2}^3f+A E_4\partial_{w-2}f+BE_6f=0
\end{equation}
then $f$ is a balanced quasimodular form of weight $w$ and depth $2$, if there
is at least one strict inequality in \eqref{eq:depth2-lambda}. If
$\lambda_0=\lambda_1=\lambda_2=\frac w{12}$ the functions
$\Delta^{\frac w{12}}$, $z\Delta^{\frac w{12}}$, and $z^2\Delta^{\frac w{12}}$
form a fundamental system of \eqref{eq:depth2-0}.
\item[\underline{$a=2$}] If $f$ is a solution of
\begin{equation}
  \label{eq:depth2-2}
  E_6\partial_{w-2}^3f+\frac12E_4^2\partial_{w-2}^2f+AE_4E_6\partial_{w-2}f+
  \left(\frac12AE_4^3+\frac13(3B-A)E_6^2\right)f=0
\end{equation}
then $f$ is a balanced quasimodular form of weight $w$ and depth $2$, if there
is at least one strict inequality in \eqref{eq:depth2-lambda}. If
$\lambda_0=\lambda_1=\lambda_2=\frac {w-2}{12}$, the functions
$E_2\Delta^{\frac {w-2}{12}}$,
$zE_2\Delta^{\frac{w-2}{12}}+\frac3{\pi i}\Delta^{\frac{w-2}{12}}$, and
$z^2E_2\Delta^{\frac {w-2}{12}}+\frac{6z}{\pi i}\Delta^{\frac{w-2}{12}}$ form a
fundamental system of \eqref{eq:depth2-2}.
\end{description}
\end{theorem}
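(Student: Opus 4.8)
The plan is to separate the residues $a=0$ and $a=2$ modulo $4$: the first case is a direct instance of Theorem~\ref{thm:solutions}, while the second is lifted from weight $w-2$ by a Serre derivative, mirroring the depth-$1$ argument.

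For $a=0$ we have $w\equiv0\pmod4$, hence $w(r+1)=3w\equiv0\pmod{12}$ with $r=2$, so Theorem~\ref{thm:solutions} applies once I check that the constants $A,B$ are exactly those forcing the indicial equation~\eqref{eq:index} of \eqref{eq:depth2-0} to have the prescribed roots. For $r=2$ that equation is
\begin{equation*}
  \left(\lambda-\tfrac{w+2}{12}\right)\left(\lambda-\tfrac{w}{12}\right)
  \left(\lambda-\tfrac{w-2}{12}\right)+A\left(\lambda-\tfrac{w-2}{12}\right)+B=0,
\end{equation*}
and I would compare it with $(\lambda-\lambda_0)(\lambda-\lambda_1)(\lambda-\lambda_2)$ by Vieta's formulas. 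The $\lambda^2$-coefficient reproduces $\lambda_0+\lambda_1+\lambda_2=\tfrac w4$; the $\lambda^1$-coefficient yields $A$ after computing that the second elementary symmetric function of $\tfrac{w\pm2}{12},\tfrac w{12}$ equals $\tfrac{3w^2-4}{144}$; and substituting $\lambda=\tfrac{w-2}{12}$, where the first two terms vanish, gives $B=-\prod_{i}\bigl(\lambda_i-\tfrac{w-2}{12}\bigr)$. These are the stated $A,B$ with $a=0$, so the full conclusion, including the degenerate system $\Delta^{w/12},z\Delta^{w/12},z^2\Delta^{w/12}$, follows at once from Theorem~\ref{thm:solutions}.

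For $a=2$ one has $w\equiv2\pmod4$, so $3w\equiv6\pmod{12}$ and Theorem~\ref{thm:solutions} is unavailable. Instead I take $g$ to be the balanced quasimodular form of weight $w-2$ and depth $2$ whose exponents are the admissible integers $\lambda_0\geq\lambda_1\geq\lambda_2$ with $\lambda_0+\lambda_1+\lambda_2=\tfrac{w-2}4$; such a $g$ exists by the case $a=0$ together with Remark~\ref{rem:exist}. The point is that the constants $A,B$ of the theorem with $a=2$ are precisely the $a=0$ constants attached to $g$ at weight $w-2$, since both formulas depend only on $w-a=w-2$. Writing $K_0=\partial_{w-4}^3+AE_4\partial_{w-4}+BE_6$ (so that $K_0g=0$) and $K_2$ for the operator of \eqref{eq:depth2-2}, I set $f=\partial_{w-4}g$, which by Lemma~\ref{lem:quasi-serre} is quasimodular of weight $w$ and depth $\leq2$.

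The step I expect to be the real obstacle is the operator identity
\begin{equation*}
  K_2\circ\partial_{w-4}=\bigl(E_6\,\partial_{w+2}+\tfrac12E_4^2\bigr)\circ K_0,
\end{equation*}
which I would prove by rewriting $K_2\partial_{w-4}g$ in iterated Serre derivatives via $\partial_{w-2}^k(\partial_{w-4}g)=\partial_{w-4}^{k+1}g$, reducing $\partial_{w-4}^3g$ through $K_0g=0$, and then collecting terms using the product rule and the relations \eqref{eq:serre-ramanujan} (notably $\partial_4E_4=-\tfrac13E_6$ and $\partial_6E_6=-\tfrac12E_4^2$); the coefficients of $E_6^2\partial_{w-4}g$, of $E_4^2E_6 g$, and of $E_4^3\partial_{w-4}g$ all cancel, leaving $\tfrac12E_4^2K_0g$. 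Granting the identity, $K_0g=0$ gives $K_2f=0$, i.e.\ $f$ solves \eqref{eq:depth2-2}. For balancedness I note $\lambda_0\geq\tfrac{w-2}{12}>\tfrac{w-4}{12}$, so the Serre derivative does not annihilate the leading coefficient and $f$ has vanishing order exactly $\lambda_0$; by Proposition~\ref{prop:solution} the associated forms $g_1,g_2$ of $f$ are the $q$-series parts of the lower Frobenius solutions of \eqref{eq:depth2-2}, hence have vanishing orders $\lambda_1,\lambda_2$, and since $\lambda_0+\lambda_1+\lambda_2=\tfrac{w-2}4=\dim\QM_w^2-1$ the condition \eqref{eq:sumlambda} holds, so $f$ is balanced whenever some inequality in \eqref{eq:depth2-lambda} is strict. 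Finally, in the degenerate case $\lambda_0=\lambda_1=\lambda_2=\tfrac{w-2}{12}$ (which forces $w\equiv2\pmod{12}$) one has $g=\Delta^{\frac{w-2}{12}}$ and hence $f=\partial_{w-4}\Delta^{\frac{w-2}{12}}=\tfrac16E_2\Delta^{\frac{w-2}{12}}$; here I would read off the stated fundamental system directly from the $S$- and $T$-invariance of \eqref{eq:depth2-2} (Lemma~\ref{lem:invar}) together with the transformation law \eqref{eq:E2S} for $E_2$, exactly as the degenerate case of Theorem~\ref{thm:solutions} was treated.
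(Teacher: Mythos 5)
Your proposal is correct and follows the paper's own (very terse) route exactly: the case $a=0$ is read off from Theorem~\ref{thm:solutions} by matching the indicial equation \eqref{eq:index} via Vieta, and the case $a=2$ is obtained by applying $\partial_{w-4}$ to the solution of \eqref{eq:depth2-0} at weight $w-2$, noting that $A,B$ depend only on $w-a$. The operator identity $K_2\circ\partial_{w-4}=\bigl(E_6\,\partial_{w+2}+\tfrac12E_4^2\bigr)\circ K_0$ that you single out as the crux is valid --- I verified it using $\partial_4E_4=-\tfrac13E_6$, $\partial_6E_6=-\tfrac12E_4^2$ and the product rule, with all cross terms cancelling as you predict --- and it, together with your observation that $\lambda_0\geq\tfrac{w-2}{12}>\tfrac{w-4}{12}$ preserves the vanishing order, simply makes explicit what the paper leaves as a one-line remark.
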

\begin{proof}
  The case $a=0$ is covered by Theorem~\ref{thm:solutions}. For $a=2$ we
  observe that if $g$ is a solution of \eqref{eq:depth2-0} for $w-2$, then
  $\partial_{w-4}g$ is a solution of \eqref{eq:depth2-2}.
\end{proof}

\subsection{Depth $3$}
In this case the dimension of the space $\QM_w^3$ is given by
\begin{equation}\label{eq:dim-depth3}
  \dim\QM_w^3=\left\lfloor\frac w3\right\rfloor+1.
\end{equation}
We denote
\begin{align*}
  \sigma_2&=\lambda_0\lambda_1+\cdots+\lambda_2\lambda_3\\
  \sigma_3&=\lambda_0\lambda_1\lambda_2+\cdots+\lambda_1\lambda_2\lambda_3
\end{align*}
the elementary symmetric functions in $\lambda_0,\ldots,\lambda_3$. Using this
notation we set
\begin{align*}
  A_0&=-\frac{w^2}{24}+\sigma_2+\frac5{72}\\
  B_0&=-\frac{w^3}{216}+\frac{w^2}{72}+
  \frac {w-2}6\sigma_2-
  \sigma_3-\frac5{216}\\
  C_{0,2,4}&=
\left(\lambda_0-\frac{w-3}{12}\right)\left(\lambda_1-\frac{w-3}{12}\right)
\left(\lambda_2-\frac{w-3}{12}\right)\left(\lambda_3-\frac{w-3}{12}\right),
\end{align*}
and
\begin{align*}
  A_2&=-\frac{(w-2)^2}{24}+\sigma_2+\frac5{72}\\
  B_2&=-\frac{(w-2)^3}{216}+\frac{w-2}6\sigma_2-\sigma_3\\
D_2&=\frac{16}3(w-2)^3-16(w-2)^2+\frac{80}{3}-192(w-4)\sigma_2
+1152\sigma_3,
\end{align*}
and
\begin{align*}
  A_4&=-\frac{(w-1)^2}{24}+\frac1{36}+\sigma_2\\
  B_4&=-\frac{2w^3-9w^2+12w-3}{432}-\sigma_3+\frac{w-2}6\sigma_2\\
  D_4&=\frac43\left(2w^3-9w^2+12w-3\right)-96(w-2)\sigma_2+576\sigma_3.
\end{align*}
\begin{theorem}\label{thm:depth3}
  Let $w\equiv 0,2,4\pmod6$ and let
  \begin{equation}\label{eq:depth3-lambda}
    \lambda_0\geq\lambda_1\geq\lambda_2\geq\lambda_3
  \end{equation}
  be positive integers with
  $\lambda_0+\lambda_1+\lambda_2+\lambda_3= \lfloor\frac w3\rfloor$ and not all
  equal. Let $A_0,B_0,C_0$, $A_2,\ldots,D_2$, and $A_4,\ldots,D_4$ be given as
  above and let
  \begin{equation}\label{eq:fz-frobenius}
    f(z)=q^{\lambda_0}\sum_{n=0}^\infty a(n)q^n.
  \end{equation}
  \begin{description}\setlength{\itemindent}{-\leftmargin}
  \item[\underline{$w\equiv0\pmod6$}] If $f$ is a solution of
    \begin{equation}
      \label{eq:depth3-0}
      \partial_{w-3}^4f+A_0 E_4\partial_{w-3}^2f+B_0 E_6\partial_{w-3}f+C_0E_4^2f=0,
    \end{equation}
    then $f$ is a balanced quasimodular form of weight $w$ and depth $3$.
    \item[\underline{$w\equiv2\pmod6$}] If $f$ is a solution of
    \begin{equation}
      \label{eq:depth3-2}
      \begin{split}
        E_4\partial_{w-3}^4f&+\frac23E_6\partial_{w-3}^3f+A_2E_4^2\partial_{w-3}^2f
        +B_2E_4E_6\partial_{w-3}f\\ &+ \left(C_2 E_4^3+D_2\Delta\right)f=0,
      \end{split}   \end{equation}
    then $f$ is a balanced quasimodular form of weight $w$ and depth $3$.
  \item[\underline{$w\equiv4\pmod6$}] If $f$ is a solution of
    \begin{equation}
      \label{eq:depth3-4}
      \begin{split}
        E_4\partial_{w-3}^4f&+\frac13E_6\partial_{w-3}^3f+
        A_4E_4^2\partial_{w-3}^2f
        +B_4E_4E_6\partial_{w-3}f\\
        &+ \left(C_4 E_4^3+D_4\Delta\right)f=0,
      \end{split}
    \end{equation}
    then $f$ is a balanced quasimodular form of weight $w$ and depth $3$.
  \end{description}
  \end{theorem}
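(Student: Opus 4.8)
The plan is to split the argument according to the residue of $w$ modulo $6$, writing $w=6k+a$ with $a\in\{0,2,4\}$, and to reduce the two harder classes to the base class $a=0$. By \eqref{eq:dim-depth3} the prescribed root sum is $\lambda_0+\lambda_1+\lambda_2+\lambda_3=\lfloor w/3\rfloor$, which equals $2k$ for $a\in\{0,2\}$ and $2k+1$ for $a=4$. For $a=0$ one has $w(r+1)=4w\equiv0\pmod{12}$, so Theorem~\ref{thm:solutions} applies verbatim: the coefficients $A_0,B_0,C_0$ are fixed by demanding that the indicial equation \eqref{eq:index} (with $r=3$) have roots $\lambda_0,\dots,\lambda_3$, and expanding the product by Vieta's relations in the $\lambda_i$ yields the stated expressions in $\sigma_2,\sigma_3$ and $w$. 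Since \eqref{eq:depth3-0} carries no $\partial_{w-3}^3$-term, the root sum is automatically $\tfrac w3=2k$, as required.

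For $a=2$ I would imitate the depth-$2$ argument and differentiate. Let $g$ be the balanced solution of the weight-$(w-2)$ instance of \eqref{eq:depth3-0} with exponents $\lambda_0\geq\cdots\geq\lambda_3$ (available since $\sum\lambda_i=2k=\dim\QM_{w-2}^3-1$). Then $f=\partial_{w-5}g$ is quasimodular of weight $w$ and depth $\leq3$ by Lemma~\ref{lem:quasi-serre}; because $\tfrac{w-5}{12}\notin\mathbb{Z}$, no leading factor $\lambda_\ell-\tfrac{w-5}{12}$ vanishes, so $\partial_{w-5}$ leaves every vanishing order, hence balancedness, intact. The computational content is to confirm that conjugating the weight-$(w-2)$ operator by $\partial$ produces precisely \eqref{eq:depth3-2}: the Serre product rule creates the $\tfrac23E_6\partial_{w-3}^3$-term (keeping the root sum at $2k$) and converts $A_0,B_0,C_0$ into $A_2,B_2,C_2,D_2$. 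Frobenius uniqueness of the solution of vanishing order $\lambda_0$ then forces the $f$ of the theorem to be a scalar multiple of $\partial_{w-5}g$.

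The case $a=4$ is the crux, and the step I expect to be the main obstacle. Its root sum $2k+1$ has the opposite parity to the value $2k$ attainable at weights $\equiv0,2\pmod{6}$; since $\partial$ and multiplication by $E_4,E_6,\Delta$ alter the total vanishing order only by an even amount, no balanced form of weight $w$ arises from a nice lower weight as in the previous case. Moreover the direct argument of Theorem~\ref{thm:diffeq} now yields only \emph{equality} in \eqref{eq:vanish}, because the leading coefficient $E_4$ raises the weight of $Kf$ by $12$ rather than by $2r+2=8$; thus it cannot by itself force $Kf=0$. The remedy is the coefficient $D_4$ of the cusp form $\Delta$, which is invisible to \eqref{eq:index}: fix $A_4,B_4,C_4$ by Vieta, take the balanced form $h$ with exponents $\lambda_0,\dots,\lambda_3$ supplied by Remark~\ref{rem:exist}, and split $K=K_0+D_4\Delta$. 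Both $K_0h$ and $\Delta h$ are quasimodular of weight $w+12$ and depth $\leq3$, and by the indicial roots together with Proposition~\ref{prop:balanced} their associated forms vanish to orders $\geq\lambda_\ell+1$ summing to exactly $\dim\QM_{w+12}^3-1$; hence each is a maximally vanishing form with the same exponents $\lambda_\ell+1$. Maximal vanishing forces the corresponding Wronskian to be a nonzero multiple of a power of $\Delta$ (as in Proposition~\ref{prop:normal}), so such forms span only a line; therefore $K_0h$ and $\Delta h$ are proportional, and the unique $D_4$ with $K_0h=-D_4\Delta h$ gives $Kh=0$. As before, Frobenius uniqueness identifies $f$ with a multiple of $h$, proving it balanced.

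Beyond this conceptual point, the remaining labor is computational: verifying that the proportionality constant (for $a=4$) and the Serre conjugation (for $a=2$) reproduce the exact rational formulas for $A_i,B_i,C_i,D_i$ in terms of $\sigma_2,\sigma_3$ and $w$, which is the source of the large expressions (the depth-$4$ analogues being deferred to the Appendix). The one structural fact that must be secured is the one-dimensionality of the space of maximally vanishing quasimodular forms with prescribed exponents (the kernel in Remark~\ref{rem:balanced} being exactly $1$-dimensional); this is what converts the equality case of Proposition~\ref{prop:balanced} into the proportionality that pins down $D_4$.
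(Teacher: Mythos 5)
Your treatment of the residues $w\equiv0,2\pmod6$ coincides with the paper's: the case $a=0$ is quoted from Theorem~\ref{thm:solutions}, and for $a=2$ the paper likewise sets $f=\partial_{w-5}g$ for $g$ the balanced solution at weight $w-2$ and leaves the coefficient bookkeeping implicit. For $w\equiv4\pmod6$, however, you diverge from the paper, and your motivating claim for doing so is false. You assert that no balanced form of weight $w$ can arise from a lower weight because ``$\partial$ and multiplication by $E_4,E_6,\Delta$ alter the total vanishing order only by an even amount''; but a \emph{tuned linear combination} of such operators changes the total order by exactly one, and this is precisely the paper's construction: it takes $g$ balanced of weight $w-4$ with exponents $(\lambda_0,\ldots,\lambda_3)\ominus1$ and sets
\begin{equation*}
  f=\partial_{w-7}^2g-\left(\lambda-\frac{w+5}{12}\right)
  \left(\lambda-\frac{w+7}{12}\right)E_4g,
\end{equation*}
where the scalar is chosen as the value of the second-order indicial factor at the lowered exponent $\lambda-1$, so that the corresponding component's leading coefficient is annihilated and its vanishing order is restored from $\lambda-1$ to $\lambda$, while all other orders are preserved. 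So the mechanism you declared impossible is the paper's actual proof.

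That said, your alternative route for $a=4$ is viable and genuinely different: fixing $A_4,B_4,C_4$ by the indicial roots (note the coefficient $\frac13$ in front of $E_6\partial_{w-3}^3$ is exactly what makes the root sum $\frac{w-1}{3}=\lfloor\frac w3\rfloor$), taking the balanced form $h$ at weight $w$ from Remark~\ref{rem:exist}, splitting $K=K_0+D_4\Delta$, and determining $D_4$ by proportionality of $K_0h$ and $\Delta h$. Your diagnosis that the argument of Theorem~\ref{thm:diffeq} degenerates to equality here is also correct: $\sum(\lambda_\ell+1)=\lfloor\frac w3\rfloor+4$ equals the integer part of $\frac{4(w+12)}{12}$, so no contradiction forces $K_0h=0$, and $\Delta$ is invisible to the indicial equation. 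This is in fact the same dimension-count philosophy the paper uses later for the $\Kdown$ operators in Sections~\ref{sec:depth-3} and~\ref{sec:depth-4}. The one genuine gap is the one-dimensionality you flag but do not prove. It does follow from the same Wronskian bound: if two linearly independent forms of weight $w+12$ had component orders at least $\lambda_\ell+1$, a suitable nonzero combination would push one component's order up by one more, making the total at least $\lfloor\frac{w+12}{3}\rfloor+1$ and contradicting Proposition~\ref{prop:balanced}; but you must also rule out degenerate combinations in which a component $g_\ell$ vanishes identically (lower effective depth), where the counting breaks down, and check the cap condition \eqref{eq:lambda<dim} that the proposition's hypotheses require. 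Without that lemma your $D_4$ is not pinned down. Both your proof and the paper's defer the verification that the constructed form satisfies the displayed equation with the stated rational coefficients to computation, so that deferral is consistent with the paper's level of detail.
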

  \begin{proof}
    For $w\equiv0\pmod6$ this is the assertion of
    Theorem~\ref{thm:solutions}. For $w\equiv2\pmod6$ let $g$ be the solution
    of the form \eqref{eq:fz-frobenius} of \eqref{eq:depth3-0} for $w-2$. Then
    $\partial_{w-5}g$ is a balanced quasimodular form of weight $w$ and depth
    $3$ satisfying \eqref{eq:depth3-2}.

    For $w\equiv4\pmod6$ we have to take into account that
    $\lfloor\frac w3\rfloor=\lfloor\frac{w-4}3\rfloor+1$, which means that the
    total order of vanishing for a balanced quasimodular form of weight $w$ is
    one higher than that of a form of weight $w-4$. We take $g$ as the solution
    of \eqref{eq:depth3-0} for the exponents
    $(\lambda_0,\ldots,\lambda_3)\ominus1$, where we call $\lambda$ the
    exponent that has been decreased. We then set
    \begin{equation}\label{eq:up4}
      f=\partial_{w-7}^2g-\left(\lambda-\frac{w+5}{12}\right)
      \left(\lambda-\frac{w+7}{12}\right)E_4g.
    \end{equation}
    Then $f$ is a balanced quasimodular form of weight $w$. Notice that $g$ has
    order $\lambda-1$ for the corresponding vanishing order, whereas $f$ has
    again $\lambda$ by the choice of the operator applied to $g$. Then $f$ is a
    solution of \eqref{eq:depth3-4}.
  \end{proof}
\subsection{Depth $4$}
In this case the dimension of the space $\QM_w^4$ is given by
\begin{equation}\label{eq:dim-depth4}
  \dim\QM_w^4=\left\lfloor\frac {5w}{12}\right\rfloor+
  \begin{cases}
    1&\text{if }w\equiv0,2,4,6,8\pmod{12}\\
    0&\text{if }w\equiv10\pmod{12}.
  \end{cases}
\end{equation}
In principle, a similar theorem to Theorems~\ref{thm:depth1}--\ref{thm:depth3}
could be given. For depth $4$ it would give six modular differential equations
according to the residue classes $0,2,4,6,8,10\pmod{12}$. The equation for
$w\equiv\pmod{12}$ can just be determined from $\lambda_0,\ldots,\lambda_4$ by
requiring that these are the solutions of the indicial equation. For
$w\equiv2\pmod{12}$ the balanced quasimodular form of weight $w$ can be
obtained from the form of weight $w-2$ by applying $\partial_{w-6}$. Similarly,
for $w\equiv4\pmod{12}$ the balanced form of weight $w$ can be obtained from
the form of weight $w-4$ by a similar operator as \eqref{eq:up4}. For
$w\equiv6,8\pmod{12}$ operators of orders $3$ and $4$ have to be used, which
are determined by two respectively three roots of their indicial equation. For
$w\equiv10\pmod{12}$ the form can again be obtained by applying
$\partial_{w-6}$ to the form obtained for $w\equiv8\pmod{12}$.


\section{Recursions for extremal quasimodular forms
  and a conjecture of Kaneko  and Koike}
\label{sec:extr-quas-forms}
In this section we use the ideas developed so far to obtain differential
recursions for extremal quasimodular forms as introduced and studied by
M.~Kaneko and M.~Koike in
\cite{Kaneko_Koike2006:extremal_quasimodular_forms}. In
\cite{Kaneko_Koike2006:extremal_quasimodular_forms} a conjecture about the
possible prime divisors of the denominators of the Fourier coefficients of
normalized extremal quasimodular forms. Recently, this conjecture was proved by
F.~Pellarin and G.~Nebe \cite{Pellarin_Nebe2019:extremal_quasi_modular} for
extremal quasimodular forms of depth $1$ and weight $\equiv0\pmod6$. Later,
A.~Mono \cite{Mono2020:conjecture_kaneko_koike} extended their proof for
extremal quasimodular forms of depth $1$ and weight $\equiv2,4\pmod6$. We will
provide a proof for quasimodular forms of depths $\leq4$ as a consequence of
the explicit form of the differential recursions.

We adopt the notation used in
\cite{Kaneko_Koike2003:modular_forms_hypergeometric} that an equation number
with subscript $w+a$ means that the parameter $w$ in this equation is replaced
by $w+a$. Also, throughout this section we take $f_w$ to denote a normalized
form of weight $w$; recall that we call a form normalized, if its leading
coefficient equals $1$.
\subsection{Depth $1$}
As stated in \cite{Kaneko_Koike2006:extremal_quasimodular_forms,
  Kaneko_Koike2003:modular_forms_hypergeometric,
  Yamashita2010:construction_extremal_quasimodular} extremal quasimodular forms
of depth $1$ satisfy the differential equation
\begin{equation}\label{eq:depth1}
    \partial_{w-1}^2f_w-\frac{w^2-1}{144}E_4f_w=0
  \end{equation}
  for $w\equiv0\pmod6$, which we assume throughout this subsection.  We observe
  that
\begin{equation}\label{eq:depth1-up}
  \Kup_wf_w=E_4\partial_{w-1}f_w-\frac{w+1}{12}E_6f_w
\end{equation}
is a solution of \eqref{eq:depth1}\textsubscript{$w+6$}, if $f_w$ is a solution
of \eqref{eq:depth1}\textsubscript{$w$}. This can be seen from the fact that
\eqref{eq:depth1-up} is a quasimodular form of weight $w+6$ of depth $1$ with
one order of vanishing higher than $f_w$, thus an extremal quasimodular form of
weight $w+6$.

On the other hand,
\begin{equation}
  \label{eq:depth1-down}
  \Kdown_wf_w=\frac1{\Delta}\left(E_4\partial_{w-1}f_w+
    \frac{w-1}{12}E_6f_w\right)
\end{equation}
is a solution of \eqref{eq:depth1}\textsubscript{$w-6$}: it is a quasimodular
form of weight $w-6$ vanishing to one order less than $f_w$. The holomorphy
follows from the fact that the differential operator in parenthesis applied to
constants gives $\mathcal{O}(q)$. The leading coefficient of $\Kdown_wf_w$
equals $\frac w6$, thus
\begin{equation*}
  \Kdown_wf_w=\frac w6f_{w-6}.
\end{equation*}

Furthermore, we have
\begin{equation}\label{eq:depth1-kdown-up}
  \Kdown_{w+6}\Kup_wf_w=
    12(w+1)(w+5)f_w
\end{equation}
using \eqref{eq:depth1}.  Defining $c_{w+6}$ by
\begin{equation*}
  \Kup_wf_w=c_{w+6}f_{w+6},
\end{equation*}
we have
\begin{equation*}
  \Kdown_{w+6}\Kup_wf_{w}=c_{w+6}\frac{w+6}6f_w,
\end{equation*}
which allows to compute $c_{w+6}$ from \eqref{eq:depth1-kdown-up}.
Summing up, we have proved the following proposition.
\begin{proposition}\label{prop-depth1}
  Let $(f_w)_{w\in2\N}$ ($w\geq6$) denote the sequence of normalized extremal
  quasimodular forms of depth $1$. Then for $w\equiv0\pmod6$
  \begin{equation}
    \label{eq:depth1-init}
    \begin{split}
      f_6&=\frac1{720}\left(E_2E_4-E_6\right)
    \end{split}
  \end{equation}
  and
  \begin{equation}
    \label{eq:depth1-recurr}
    f_{w+6}=\frac{w+6}{72(w+1)(w+5)}\left(E_4\partial_{w-1}f_w
      -\frac{w+1}{12}E_6f_w\right).
  \end{equation}
  Furthermore, we have
  \begin{equation}
    \label{eq:depth1-2up}
    \begin{split}
      f_{w+2}&=\frac{12}{w+1}\partial_{w-1}f_w\\
      f_{w+4}&=E_4f_w.
    \end{split}
  \end{equation}
\end{proposition}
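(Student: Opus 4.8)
The plan is to establish Proposition~\ref{prop-depth1} by assembling the pieces already developed in this subsection, the only genuinely new content being the base case and the verification that the recursion coefficients come out as claimed. First I would settle the initial value $f_6$. Since $w=6$ gives $\dim\QM_6^1=\lfloor 6/6\rfloor+1=2$, an extremal quasimodular form of depth $1$ and weight $6$ has vanishing order $\lambda=1$ at $i\infty$; it is the unique (up to scale) depth-$1$ form of weight $6$ vanishing at the cusp. Writing a general weight-$6$ depth-$1$ form as $\alpha E_2 E_4+\beta E_6$ and using the Fourier expansions $E_2=1-24q+\cdots$, $E_4=1+240q+\cdots$, $E_6=1-504q+\cdots$, I would impose the constant term $=0$, solve for the ratio $\alpha:\beta$, and then normalize so that the leading ($q^1$) coefficient equals $1$. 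This yields $f_6=\tfrac1{720}(E_2E_4-E_6)$ after using Ramanujan's identities \eqref{eq:ramanujan} to recognize the combination.

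Next I would justify the recursion \eqref{eq:depth1-recurr}. This is immediate from the lowering operator: combining the identity $\Kdown_wf_w=\tfrac w6 f_{w-6}$, established just above the proposition, with the definition $\Kup_wf_w=c_{w+6}f_{w+6}$ and the composition formula \eqref{eq:depth1-kdown-up}, I compute $c_{w+6}$. Applying $\Kdown_{w+6}$ to $\Kup_wf_w=c_{w+6}f_{w+6}$ and using $\Kdown_{w+6}f_{w+6}=\tfrac{w+6}6 f_w$ gives $\Kdown_{w+6}\Kup_wf_w=c_{w+6}\tfrac{w+6}6 f_w$; comparing with \eqref{eq:depth1-kdown-up} yields $c_{w+6}=\tfrac{72(w+1)(w+5)}{w+6}$. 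Substituting this value of $c_{w+6}$ into $f_{w+6}=c_{w+6}^{-1}\Kup_wf_w$ and expanding $\Kup_w$ via \eqref{eq:depth1-up} produces exactly \eqref{eq:depth1-recurr}.

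Finally, for the two raising relations \eqref{eq:depth1-2up}, the argument is of the same flavour as those used for $\Kup$ and $\Kdown$. By Lemma~\ref{lem:quasi-serre}, $\partial_{w-1}f_w$ is a quasimodular form of weight $w+2$ and depth $\leq1$, and since applying the Serre derivative raises the vanishing order by one, it is extremal of weight $w+2$; computing its leading coefficient ($\lambda-\tfrac{w}{12}=\tfrac{w+2}{12}-\tfrac{w}{12}$ up to the normalization bookkeeping inherent in $\partial_{w-1}$) and dividing out gives $f_{w+2}=\tfrac{12}{w+1}\partial_{w-1}f_w$. Likewise $E_4f_w$ has weight $w+4$, the same vanishing order as $f_w$, hence is extremal of weight $w+4$, and since $E_4(i\infty)=1$ its leading coefficient is already $1$, giving $f_{w+4}=E_4f_w$. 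The only point demanding care is the constant in $f_{w+2}$: I would verify the factor $\tfrac{12}{w+1}$ by reading off the leading Fourier coefficient of $\partial_{w-1}f_w=\left(q\tfrac{d}{dq}-\tfrac{w-1}{12}E_2\right)f_w$ applied to $q^{\lambda}(1+\cdots)$, where extremality forces $\lambda=\tfrac{w-a}{12}$ in the appropriate normalization. I expect this bookkeeping of leading coefficients to be the main—though entirely routine—obstacle, since every structural claim about depth, weight, and vanishing order follows directly from the operators $\Kup,\Kdown$ and from Lemma~\ref{lem:quasi-serre} already in hand.
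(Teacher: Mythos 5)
Your base case and your derivation of the main recursion \eqref{eq:depth1-recurr} are correct and coincide exactly with the paper's proof: the paper likewise determines $c_{w+6}$ by comparing $\Kdown_{w+6}\Kup_wf_w=12(w+1)(w+5)f_w$ from \eqref{eq:depth1-kdown-up} with $\Kdown_{w+6}f_{w+6}=\frac{w+6}{6}f_w$, giving $c_{w+6}=\frac{72(w+1)(w+5)}{w+6}$.

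The problem is your justification of \eqref{eq:depth1-2up}. You assert that ``applying the Serre derivative raises the vanishing order by one''; this is false here, and if it were true it would contradict the very formula you are proving. For $w\equiv0\pmod6$ the extremal form satisfies $f_w=q^{w/6}(1+\cdots)$, and
\begin{equation*}
  \partial_{w-1}f_w=q\frac{df_w}{dq}-\frac{w-1}{12}E_2f_w
  =\left(\frac w6-\frac{w-1}{12}\right)q^{w/6}+\cdots
  =\frac{w+1}{12}\,q^{w/6}+\cdots,
\end{equation*}
so the vanishing order is \emph{unchanged}. That is precisely what is needed, because the extremal order in weight $w+2$ is $\lfloor(w+2)/6\rfloor=w/6$, also unchanged; the paper makes this point explicitly in the proof of Theorem~\ref{thm:depth1} (``neither application of the Serre derivative, nor multiplication by $E_4$ change the vanishing orders''). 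Had the order genuinely increased to $w/6+1$, it would exceed the bound $\lambda_0\leq\frac{w+2}{6}$ following from Proposition~\ref{prop:balanced}, forcing $\partial_{w-1}f_w\equiv0$ and making the nonzero normalization $\frac{12}{w+1}$ impossible. Your accompanying bookkeeping is also off on two counts: extremality forces $\lambda=\frac{w-a}{6}$ (here $w/6$), not $\frac{w-a}{12}$, which is instead the smallest Frobenius exponent; and the leading coefficient of $\partial_{w-1}f_w$ is $\lambda-\frac{w-1}{12}=\frac{w+1}{12}$, not $\lambda-\frac{w}{12}$. Since your own stated plan of reading off the leading Fourier coefficient would have exposed and repaired these slips, the gap is local rather than structural: with the corrected order computation, the arguments for $f_{w+2}$ and for $f_{w+4}$ (the latter is fine as written) go through, and your proof then agrees with the paper's.
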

\subsection{Depth $2$}
The extremal quasimodular forms of weight $w$ and depth $2$
satisfy the differential equation
\begin{equation}\label{eq:depth2-diff}
 \partial_{w-2}^3f-
    \frac{3w^2-4}{144}E_4\partial_{w-2}f-
 \frac{(w+1)(w-2)^2}{864}E_6f=0
\end{equation}
for $w\equiv0\pmod4$, which we assume for this subsection.
If $f_w$ satisfies \eqref{eq:depth2-diff}\textsubscript{$w$}, then
\begin{equation}
  \label{eq:diff-recurr}
  \Kup_wf_w=\frac{w(w+1)}{36}E_4f_w-\partial_{w-2}^2f_w
\end{equation}
satisfies \eqref{eq:depth2-diff}\textsubscript{$w+4$}  and
\begin{equation}
  \label{eq:diff-down}
  \Kdown_wf_w=\frac1\Delta\left(E_4\partial_{w-2}^2f_w+
    \frac{w-1}6E_6\partial_{w-2}f_w +    \frac{(w-2)^2}{144}E_4^2f_w\right)
\end{equation}
satisfies \eqref{eq:depth2-diff}\textsubscript{$w-4$}. As before this can be
seen from the fact that $\Kup_wf_w$ and $\Kdown_wf_w$ are quasimodular forms of
respective weights $w+4$ and $w-4$, which vanish to respective orders
$\frac w4+1$ and $\frac w4-1$, thus being extremal.  Notice that the indicial
equation of $\Delta\Kdown_w$ has a double root at $0$, thus it maps linear
functions in $z$ to $q\times(\text{linear functions in }z)$. In
\cite{Kaneko_Koike2006:extremal_quasimodular_forms} essentially the same
operator expressed in terms of the Rankin-Cohen bracket (see
\cite{Zagier1994:modular_forms_differential}) has been used.

As before, define $c_{w+4}$ by
\begin{equation*}
  \Kup_wf_w=c_{w+4}f_{w+4}
\end{equation*}
and observe that
\begin{equation*}
  \Kdown_wf_w=\left(\frac w4\right)^2f_{w-4}.
\end{equation*}
Furthermore, we have
\begin{equation*}
  \Kdown_{w+4}\Kup_wf_w=
    \frac13(w+1)(w+2)^2(w+3)f_w.
\end{equation*}
Putting these together gives
\begin{equation*}
  c_{w+4}=\frac{16(w+1)(w+3)(w+2)^2}{3w^2}.
\end{equation*}

Thus we have proved
\begin{proposition}\label{prop-depth2}
  Let $(f_w)_{w\in\N}$ ($w\geq4$) denote the sequence of normalized extremal
  quasimodular forms of depth $2$. Then for $w\equiv0\pmod4$
  \begin{equation}
    \label{eq:depth2-init}
      f_4=\frac1{288}\left(E_4-E_2^2\right)
  \end{equation}
  and
  \begin{equation}
    \label{eq:depth2-recurr}
    f_{w+4}=\frac{3w^2}{16(w+1)(w+2)^2(w+3)}
    \left(\frac{(w+1)w}{36}E_4f_w-\partial_{w-2}^2f_w\right).
  \end{equation}
  Furthermore, we have
  \begin{equation}
    \label{eq:depth2-2up}
    f_{w+2}=\frac6{w+1}\partial_{w-2}f_w.
  \end{equation}
\end{proposition}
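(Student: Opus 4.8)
The plan is to run an induction on the weight, built on one structural fact: the normalized extremal quasimodular form of a given weight and depth $2$ is unique. This follows from the bound $\lambda_0+\cdots+\lambda_r\leq\frac{w(r+1)}{12}=\dim\QM_w^2-1$ of Proposition~\ref{prop:balanced} (valid for $r\leq4$): if two linearly independent forms both attained the maximal vanishing order, a suitable combination would be a nonzero form vanishing to strictly higher order, exceeding the bound. Consequently, to pin down an extremal form it suffices to exhibit \emph{some} quasimodular form of the correct weight, depth $\leq2$, and maximal vanishing order, and then fix the normalization. This is exactly what the operators $\Kup_w$, $\Kdown_w$ and the Serre derivative $\partial_{w-2}$ supply.

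For the base case \eqref{eq:depth2-init} I would invoke Proposition~\ref{prop:dim}: $\dim\QM_4^2=2$, and since $\M_2=\{0\}$ the space is spanned by $E_2^2$ and $E_4$. Extremality forces vanishing order $1$ at $i\infty$, hence the combination $E_4-E_2^2$ up to a scalar; the Fourier expansion \eqref{eq:eisenstein-fourier} gives $E_4-E_2^2=288q+O(q^2)$, so normalizing yields $f_4=\frac1{288}(E_4-E_2^2)$. For the recursion \eqref{eq:depth2-recurr} I would record, as in the discussion above, that $\Kup_wf_w$ defined by \eqref{eq:diff-recurr} is a quasimodular form of weight $w+4$, depth $\leq2$, vanishing to order $\frac w4+1$, hence extremal; by uniqueness $\Kup_wf_w=c_{w+4}f_{w+4}$ for a scalar $c_{w+4}$. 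Inverting this relation and substituting the explicit value of $c_{w+4}$ produces \eqref{eq:depth2-recurr}, and together with the base case this determines $f_w$ for all $w\equiv0\pmod4$ by induction. The companion identity \eqref{eq:depth2-2up} then comes from the Serre derivative: by Lemma~\ref{lem:quasi-serre} $\partial_{w-2}f_w\in\QM_{w+2}^2$, and writing $f_w=q^{w/4}(1+O(q))$, the leading coefficient of $\partial_{w-2}f_w=f_w'-\frac{w-2}{12}E_2f_w$ at $i\infty$ is $\frac w4-\frac{w-2}{12}=\frac{w+1}6$, while the vanishing order remains $\frac w4$. Since $\frac w4$ is the maximal vanishing order in weight $w+2$, the form $\partial_{w-2}f_w$ is extremal, hence equals $\frac{w+1}6f_{w+2}$, giving \eqref{eq:depth2-2up}. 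The two families together cover all even weights $w\geq4$.

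The main obstacle is the determination of the scalar $c_{w+4}$. I would obtain it by composing $\Kup_w$ with the lowering operator $\Kdown_{w+4}$ of \eqref{eq:diff-down}: one needs that $\Kdown_{w+4}f_{w+4}$ is again an extremal form of weight $w$, so acts on $f_{w+4}$ as an explicit scalar, and that the composite $\Kdown_{w+4}\circ\Kup_w$ reduces to a scalar multiple of the identity on solutions of \eqref{eq:depth2-diff}; comparing the two expressions isolates $c_{w+4}$. Establishing these two facts is the genuine computation: for the composite one must expand the third-order operator $\Delta\Kdown_{w+4}$ applied to the second-order $\Kup_w$, and repeatedly invoke Ramanujan's identities \eqref{eq:ramanujan} together with the defining equation \eqref{eq:depth2-diff} to collapse all iterated Serre derivatives $\partial_{w-2}^kf$ with $k\geq3$ back into the span of $f$, $\partial_{w-2}f$, and $\partial_{w-2}^2f$. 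One must also verify the holomorphy built into the definition of $\Kdown_{w+4}$, namely that the operator in parentheses in \eqref{eq:diff-down}, applied to a form vanishing to order $\frac w4$, produces a result divisible by $\Delta$, which is what legitimizes viewing $\Kdown_{w+4}f_{w+4}$ as an honest holomorphic quasimodular form of weight $w$.
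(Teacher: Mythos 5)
Your proposal follows the paper's route essentially verbatim: the paper likewise observes that $\Kup_wf_w$ is a quasimodular form of weight $w+4$ and depth $\leq2$ vanishing to order $\frac w4+1$, hence extremal, writes $\Kup_wf_w=c_{w+4}f_{w+4}$, and pins down the scalar by composing with the lowering operator, using $\Kdown_wf_w=\left(\frac w4\right)^2f_{w-4}$ together with $\Kdown_{w+4}\Kup_wf_w=\frac13(w+1)(w+2)^2(w+3)f_w$; your explicit leading-coefficient checks for $f_4$ and for $\partial_{w-2}f_w$ (coefficient $\frac w4-\frac{w-2}{12}=\frac{w+1}6$), and your remark on the holomorphy of $\Kdown$, merely spell out what the paper leaves implicit (the paper addresses the latter via the double root at $0$ of the indicial equation of $\Delta\Kdown_w$).

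One concrete warning, however, about the step you defer as ``the genuine computation'': carried out faithfully, it does \emph{not} reproduce the constant printed in \eqref{eq:depth2-recurr}. From $\Kdown_wf_w=\left(\frac w4\right)^2f_{w-4}$ one gets $\Kdown_{w+4}f_{w+4}=\left(\frac{w+4}4\right)^2f_w$, so
\begin{equation*}
  c_{w+4}\left(\frac{w+4}4\right)^2=\frac13(w+1)(w+2)^2(w+3),\qquad
  c_{w+4}=\frac{16(w+1)(w+2)^2(w+3)}{3(w+4)^2},
\end{equation*}
and the recursion should read $f_{w+4}=\frac{3(w+4)^2}{16(w+1)(w+2)^2(w+3)}\Kup_wf_w$; the $3w^2$ in \eqref{eq:depth2-recurr} (and the paper's intermediate $c_{w+4}=\frac{16(w+1)(w+3)(w+2)^2}{3w^2}$) substitutes $w$ where $w+4$ is required, in contrast to the correctly shifted factors $w+6$, $(w+6)^3$, $(w+12)^4$ in the depth $1$, $3$, $4$ recursions. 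A check at $w=4$ is decisive: with $f_4=q+6q^2+\cdots$ one finds $\partial_2^2f_4=\frac59q+\frac{95}3q^2+\cdots$ and $\frac59E_4f_4=\frac59q+\frac{410}3q^2+\cdots$, hence $\Kup_4f_4=105\,q^2+\cdots=105f_8$, whereas the printed constant would force $f_8$ to have leading coefficient $\frac{105}{420}=\frac14$. So your method is sound, but your closing assertion that substituting $c_{w+4}$ ``produces \eqref{eq:depth2-recurr}'' would in fact produce a corrected version of it. A smaller caveat: your uniqueness argument invokes Proposition~\ref{prop:balanced}, whose hypotheses (the ordering \eqref{eq:balanced} and $\lambda_\ell<\dim\QM_{w-2\ell}^{r-\ell}$) need not hold for the difference of two extremal forms; for depth $\leq4$ uniqueness is better quoted from Kaneko--Koike, as the paper implicitly does when it speaks of ``the'' sequence $(f_w)$.
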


\subsection{Depth $3$}\label{sec:depth-3}
For depth $3$ the extremal quasimodular forms of weight $w$ satisfy the
differential equation
\begin{equation}
  \label{eq:depth3-diff}
    \partial_{w-3}^4f-
    \frac{3w^2-5}{72}E_4\partial_{w-3}^2f
    -\frac{w^3-3w^2+5}{216}E_6\partial_{w-3}f
    -\frac{(w+1)(w-3)^3}{6912}E_4^2f=0
\end{equation}
for $w\equiv0\pmod6$, which we assume for this subsection.

We define
\begin{equation}
  \label{eq:depth3-up}
  \begin{split}
 \Kup_wf&=48(7w^2+42w+60)\partial_{w-3}^3f\\
    &- (15 w^4+96 w^3+ 151 w^2-30
    w-116)E_4\partial_{w-3}f \\&-\frac16(w+1)(9w^4+45w^3+40w^2+24w+144)E_6f    
  \end{split}
\end{equation}
Then $\Kup_wf_w$ is a solution of \eqref{eq:depth3-diff}\textsubscript{$w+6$},
if $f_w$ is a solution of \eqref{eq:depth3-diff}\textsubscript{$w$} by a
similar reasoning as before.

For the corresponding operator $\Kdown_w$ we make an ansatz
$\Kdown_w=\frac1{\Delta^2}L_w$ with
\begin{equation*}
  L_w=A_{12}\partial_{w-3}^3+A_{14}\partial_{w-3}^2+A_{16}\partial_{w-3}+A_{18}
\end{equation*}
with unknown modular forms $A_{12},\ldots,A_{18}$ of respective weights
$12,\ldots,\allowbreak 18$. This ansatz is motivated by the fact that the form
$\Kdown_wf_w$ has to have a vanishing order two less than the vanishing order
of $f_w$. Thus the factor $\frac1{\Delta^2}$. On the other hand $\Kdown_wf_w$
has to be a form of weight $w-6$, thus $L_w$ has to add a weight of $18$. We
take $L_w$ as an operator of order $3$, because for higher order the order
could be reduced by the fact that $f_w$ satisfies a differential equation of
order $4$. The ansatz given above is the general form of an operator satisfying
the described requirements.

We apply $L_w$ to the solution $f_w$ of
\eqref{eq:depth3-diff} with $f_w=q^{\frac w3}(1+\mathcal{O}(q))$ (a
quasimodular form of weight $w$ and depth $3$). Then $L_wf_w$ is a quasimodular
form of weight $w+18$. From this form we define the quasimodular forms
$\widetilde{g}_1,\widetilde{g}_2,\widetilde{g}_3$ as in
Section~\ref{sec:quasi-modul-vect}. We define the map
\begin{align*}
  \Phi:\M_{12}\oplus\cdots\oplus\M_{18}&\to\C^6\\
  (A_{12},\ldots,A_{18})&\mapsto\text{coefficients of }1\text{ and }q\text{
    of }\widetilde{g}_1,\widetilde{g}_2,\widetilde{g}_3.
\end{align*}
Since the dimension of $\M_{12}\oplus\cdots\oplus\M_{18}$ is $7$, this map has
a non-trivial kernel. We take $(A_{12},\ldots,A_{18})$ to be in this
kernel. Then  the functions
$\widetilde{g}_1,\widetilde{g}_2,\widetilde{g}_3$ vanish to order $2$, $L_wf_w$
vanishes to order $\frac w3$, thus $\frac1{\Delta^2}L_wf_w$ is a holomorphic
form of weight $w-6$ vanishing to order $\frac w3-2$ at $i\infty$, thus an
extremal quasimodular form of this weight.

This gives
\begin{equation}
  \label{eq:depth3-down}
  \begin{split}
\Kdown_wf
&=\frac1{864\Delta^2}\Bigl(864\left(\left(9 w^2-54 w+84\right) E_4^3+
      \left(7 w^2-42 w+60\right) E_6^2\right)\partial_{w-3}^3f\\
    &    +3456 (w-3)^2 (w-1)E_4^2 E_6\partial_{w-3}^2f\\
    &    + 6E_4\Bigl(\left(39 w^4-336 w^3+1099 w^2-1626 w+924\right)
    E_6^2\\
    &    \quad+3 \left(3 w^4-48 w^3+231 w^2-450 w+316\right) E_4^3\Bigr)
    \partial_{w-3}f\\
    &    +(w-3)^2 E_6\left(3 \left(3 w^3-24 w^2+64 w-56\right) E_4^3
      -\left(w^3-24 w+48\right) E_6^2\right)f\Bigr).
  \end{split}\raisetag{2cm}
\end{equation}
Then $\Kdown_wf_w$ is a holomorphic form of weight $w-6$ with vanishing order
$\frac w3-2$, which solves \eqref{eq:depth3-diff}\textsubscript{$w-6$} for a
solution $f_w$ of \eqref{eq:depth3-diff}\textsubscript{$w$}. This gives
\begin{equation}
  \label{eq:depth3-updown}
    \Kdown_{w+6}\Kup_wf_w= 5184 (w+1) (w+2)^3 (w+3)^2 (w+4)^3 (w+5) f_w.
\end{equation}

From the fact that
\begin{equation*}
  \Kdown_{w}f_w=16(w-3)^2\left(\frac w3\right)^3q^{\frac w3-2}+\cdots=
  16(w-3)^2\left(\frac w3\right)^3f_{w-6},
\end{equation*}
we obtain with $\Kup_wf_w=c_{w+6}f_{w+6}$ that
\begin{align*}
  \Kdown_{w+6}\Kup_wf_w&=16(w+3)^2\left(\frac {w+6}3\right)^3c_{w+6}f_w\\
  &=5184 (w+1) (w+2)^3 (w+3)^2 (w+4)^3 (w+5) f_w,
\end{align*}
which gives $c_{w+6}$.

Furthermore, the Frobenius ansatz for $f_w$ gives
\begin{equation*}
  f_w=q^{\frac w3}\left(1+\frac{w \left(w^2+15 w-18\right)}{(w+3)^2}q+
    \mathcal{O}(q^2)\right).
\end{equation*}
Inserting this into $\partial_{w-3}$ and
$\frac{(w+1)(3w+1)}{48}E_4f_w-\partial_{w-3}^2f_w$
gives
\begin{align*}
  \partial_{w-3}f_w&=q^{\frac w3}\left(\frac{w+1}4+\mathcal{O}(q)\right)\\
  \frac{(w+1)(3w+1)}{48}E_4f_w-\partial_{w-3}^2f_w&=
  q^{\frac w3}\!
  \left(\frac{27 (w+1) (w+2)^3}{2 (w+3)^2}q+\mathcal{O}(q^2)\right).
\end{align*}
These two forms then have respective weights $w+2$ and $w+4$ and vanishing
orders $\lfloor\frac{w+2}3\rfloor$ and $\lfloor\frac{w+4}3\rfloor$ and are thus
extremal. The leading coefficients can be read off.

Thus we have proved
\begin{proposition}\label{prop:depth3}
  Let $(f_w)_{w\in2\N}$ ($w\geq6$) denote the sequence of
  normalized extremal quasimodular forms of depth $2$. Then
  \begin{equation}
    \label{eq:depth3-init}
      f_6=\frac{5E_2^3-3E_2E_4-2E_6}{51840}\\
  \end{equation}
  and
  \begin{equation}
    \label{eq:depth3-recurr}
    f_{w+6}=\frac{(w+6)^3}{2^23^7(w+1)(w+2)^3(w+4)^3(w+5)}\Kup_wf_w
  \end{equation}
  with $\Kup_w$ given by \eqref{eq:depth3-up}.
  Furthermore, we have
  \begin{equation}
    \label{eq:depth3-2up}
    \begin{split}
      f_{w+2}&=\frac4{w+1}\partial_{w-3}f_w\\
      f_{w+4}&=\frac{2(w+3)^2}{27(w+1)(w+2)^3}
      \left(\frac{(w+1)(3w+1)}{48}E_4f_w-\partial_{w-3}^2f_w\right).
    \end{split}
  \end{equation}
\end{proposition}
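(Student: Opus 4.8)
The plan is to read off every assertion of the proposition from the operator calculus for $\Kup_w$ and $\Kdown_w$ assembled just above, the only genuine new input being the leading Fourier coefficients of a handful of explicit forms. Throughout I use the standard fact that an extremal quasimodular form of depth $3$ and weight $w\equiv0\pmod6$ is, up to a scalar, the unique solution of \eqref{eq:depth3-diff}\textsubscript{$w$} whose vanishing order at $i\infty$ attains the maximal value $\dim\QM_w^3-1=\frac w3$, the scalar being fixed by normalizing the leading coefficient to $1$.

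First I would settle the base case. Expanding $E_2=1-24q-72q^2+\cdots$, $E_4=1+240q+2160q^2+\cdots$ and $E_6=1-504q-16632q^2+\cdots$, a short computation gives $5E_2^3-3E_2E_4-2E_6=51840\,q^2+\mathcal O(q^3)$; the constant and $q^1$ coefficients cancel, so the right-hand side of \eqref{eq:depth3-init} is a quasimodular form of weight $6$ and depth $3$ with vanishing order $2=\lfloor6/3\rfloor$ and leading coefficient $1$, hence the normalized extremal form $f_6$.

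Next I would establish the main recursion \eqref{eq:depth3-recurr}. By the reasoning preceding the proposition, $\Kup_wf_w$ solves \eqref{eq:depth3-diff}\textsubscript{$w+6$} and is a depth-$3$ form of weight $w+6$ whose vanishing order exceeds that of $f_w$ by $2$, i.e.\ equals $\frac{w+6}3$; it is therefore extremal, so $\Kup_wf_w=c_{w+6}f_{w+6}$. Applying $\Kdown_{w+6}$ and combining the composition identity \eqref{eq:depth3-updown} with the leading-coefficient evaluation $\Kdown_{w}f_w=16(w-3)^2\bigl(\tfrac w3\bigr)^3 f_{w-6}$ read off from \eqref{eq:depth3-down} yields the scalar equation
\begin{equation*}
  c_{w+6}\cdot16(w+3)^2\Bigl(\tfrac{w+6}3\Bigr)^3=
  5184(w+1)(w+2)^3(w+3)^2(w+4)^3(w+5),
\end{equation*}
whose solution $c_{w+6}=2^23^7(w+1)(w+2)^3(w+4)^3(w+5)/(w+6)^3$ (here $5184\cdot27/16=2^23^7=8748$) inverts to the stated formula. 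The two identities \eqref{eq:depth3-2up} follow the same way: $\partial_{w-3}f_w$ is a depth-$3$ form of weight $w+2$ with vanishing order $\frac w3=\lfloor\frac{w+2}3\rfloor$, hence extremal, and its leading coefficient is $\frac w3-\frac{w-3}{12}=\frac{w+1}4$, giving $f_{w+2}=\frac4{w+1}\partial_{w-3}f_w$; likewise $\frac{(w+1)(3w+1)}{48}E_4f_w-\partial_{w-3}^2f_w$ is a depth-$3$ form of weight $w+4$ whose $q^{w/3}$-coefficient cancels, leaving leading term $\frac{27(w+1)(w+2)^3}{2(w+3)^2}q^{w/3+1}$ of vanishing order $\frac w3+1=\lfloor\frac{w+4}3\rfloor$, hence extremal, which yields the second line.

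I expect the genuine difficulty to lie not in this bookkeeping but in the ingredients it rests on, all of which are granted above: verifying that the explicit operators \eqref{eq:depth3-up} and \eqref{eq:depth3-down} really intertwine \eqref{eq:depth3-diff}\textsubscript{$w$} with \eqref{eq:depth3-diff}\textsubscript{$w\pm6$}, and in particular pinning down the large polynomial constant in \eqref{eq:depth3-updown}. These are sizeable but mechanical identities among modular forms, reducible to polynomial identities in $E_4,E_6$ via Ramanujan's relations \eqref{eq:ramanujan} and most safely confirmed with a computer algebra system; the holomorphy of $\Kdown_wf_w$ in turn rests on the kernel argument for $\Phi$ forcing $L_wf_w$ to vanish to order $\geq2$, so that division by $\Delta^2$ stays holomorphic. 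Once these are in hand, the proposition follows from the leading-coefficient comparisons above.
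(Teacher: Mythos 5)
Your proposal is correct and follows essentially the same route as the paper: you use the intertwining operators $\Kup_w$ and $\Kdown_w$, the composition identity \eqref{eq:depth3-updown} together with the leading-coefficient evaluation of $\Kdown_w$ to pin down $c_{w+6}=2^23^7(w+1)(w+2)^3(w+4)^3(w+5)/(w+6)^3$, and the Frobenius expansion of $f_w$ to identify $\partial_{w-3}f_w$ and $\frac{(w+1)(3w+1)}{48}E_4f_w-\partial_{w-3}^2f_w$ as extremal with the stated leading coefficients. Your explicit $q$-expansion check of the base case \eqref{eq:depth3-init} (which the paper leaves implicit) and your arithmetic, including $5184\cdot27/16=2^23^7$, are correct.
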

\subsection{Depth $4$}\label{sec:depth-4}
For depth $4$ the extremal quasimodular forms of weight $w$ satisfy the
differential equation
\begin{equation}
  \label{eq:depth4-diff}
  \begin{split}
    &\partial_{w-4}^5f-\frac{5}{72} \left(w^2-2\right)E_4\partial_{w-4}^3f
    -\frac{5}{432} \left(w^3-3 w^2+6\right)E_6\partial_{w-4}^2f\\
    -&\frac{15 w^4-120 w^3+280 w^2-496}{20736}E_4^2\partial_{w-4}f
    -\frac{(w-4)^4 (w+1)}{62208}E_4E_6f=0
  \end{split}
\end{equation}
for $w\equiv0\pmod{12}$, which we assume for this subsection.

We define
\begin{equation}
  \label{eq:deth4-up}
  \begin{split}
    \Kup_wf&=-p_0(w)E_4\partial_{w-4}^4f+
    \frac{(w+4)^4}{12}p_1(w)E_6\partial_{w-4}^3f\\
    &+\frac1{720}p_2(w) E_4^2\partial_{w-4}^2f
    +\frac1{8640}p_3(w)
    E_4E_6\partial_{w-4}f\\
    &+\left(\frac{w+1}{25920}p_4(w)E_4^3+
      \frac{(w+1)(w+4)^4}{15}p_5(w)\Delta\right)f.
  \end{split}
\end{equation}
The polynomials $p_0(w),\ldots,p_5(w)$ are given in
\eqref{eq:depth4-p0}--\eqref{eq:depth4-p5} in
the Appendix. These polynomials are chosen so that the operator removes the
powers $q^{\frac{5w}{12}},\ldots,q^{\frac{5w}{12}+4}$ from the power series of
$f_w$. Then $\Kup_wf$ is a solution of
\eqref{eq:depth4-diff}\textsubscript{$w+12$}, if $f$ is a solution of
\eqref{eq:depth4-diff}\textsubscript{$w$}.

Similarly, we make an ansatz for an operator $\Kdown_w=\frac1{\Delta^5}L_w$
with 
\begin{equation}
  \label{eq:depth4-down}
  L_wf=
  C_{40}\partial_{w-4}^4f+C_{42}\partial_{w-4}^3f+
  C_{44}\partial_{w-4}^2f+C_{46}\partial_{w-4}f+C_{48}f,
\end{equation}
where $C_{40},\ldots,C_{48}$ are modular forms of weights $40,\ldots,48$.
This ansatz is motivated by the fact that $\Kdown_wf_w$ has to have five less
order of vanishing than $f_w$, thus the division by $\Delta^5$. The operator
should be of order $4$, because otherwise it could reduced to lower order by
the fact that $f_w$ satisfies an equation of order $5$. In order to make
$\Kdown_wf_w$ a form of weight $w-12$, $L_wf_w$ has to have weight $w+48$, and
the ansatz above gives the general form of such an operator.

First we notice that for any such operator $L_w$, $L_wf$ is a quasimodular form
of weight $w+48$ and depth $4$. Furthermore, if $f$ vanishes to some order at
$i\infty$, then $L_wf$ vanishes to at least this order. Now take $f_w$ to be a
solution of \eqref{eq:depth4-diff}\textsubscript{$w$} with
$f_w=\mathcal{O}(q^{\frac{5w}{12}})$. Then, as in
Section~\ref{sec:quasi-modul-vect} we form the quasimodular forms
$\widetilde{g}_1,\ldots,\widetilde{g}_4$ from the function $L_wf_w$ and
consider the linear map
\begin{align*}
  \Phi:\M_{40}\oplus\cdots\oplus\M_{48}&\to\C^{20}\\
  (C_{40},\ldots,C_{48})&\mapsto \text{coefficients of }1,q,\ldots,q^4\text{ of
  }\widetilde{g}_1,\ldots,\widetilde{g}_4.
\end{align*}
The space $\M_{40}\oplus\cdots\oplus\M_{48}$ has dimension $21$, thus the map
$\Phi$ has a non-trivial kernel. We choose $(C_{40},\ldots,C_{48})$ in the
kernel to form the operator $L_w$ (and then $\Kdown_w$). Then the functions
$L_wf_w,\widetilde{g}_1,\ldots,\widetilde{g}_4$ all vanish to order at least
$5$ at $i\infty$. If we divide these functions by $\Delta^5$, we still obtain
holomorphic functions. Thus $\Kdown_wf_w$ is a holomorphic quasimodular form of
weight $w-12$ and depth $4$. Furthermore, it vanishes to order (at least)
$\frac{5(w-12)}{12}$ at $i\infty$ and is thus extremal and solves
\eqref{eq:depth4-diff}\textsubscript{$w-12$}. The choice of the forms is given
in \eqref{eq:C40}--\eqref{eq:C48} in the Appendix.

With these two operators we obtain
\begin{align}
  \Kdown_{w+12}\Kup_wf_w&=\frac{2^{40}3^{23}}{5} (w+1) (w+2)^5 (w+3)^5 (w+4)^5
  (w+5) \notag\\ &\times (w+6)^4(w+7) (w+8)^5 (w+9)^5 (w+10)^5 (w+11)f_w.
  \label{eq:kdown-kup}
\end{align}
On the other hand by the construction of $\Kdown_w$ we have
\begin{equation*}
  \Kdown_wf_w=\frac{5^4}{2^4}
  w^4(5 w-12)^4 (5 w-24)^4(5 w-36)^4(5 w-48)^4  f_{w-12}.
\end{equation*}

Furthermore, the Frobenius ansatz gives
\begin{equation*}
  f_w=q^{\frac{5w}{12}}\Biggl(1+
    \frac{2 w \left(211 w^4+4440 w^3+12960 w^2-20736\right)}{(5 w+12)^4}q+\cdots
  \Biggr)
\end{equation*}
with explicit coefficients for $q^2,\cdots,q^6$.
From this we obtain
\begin{equation}\label{eq:depth4-vanishing}
  \begin{split}
    \bullet\,&\scriptstyle\partial_{w-4}f_w=\frac{w+1}3q^{\frac{5w}{12}}+\cdots\\[2mm]
    \bullet\,&\scriptstyle\frac{(w+1)(2w+1)}{18\vphantom{X^{X}}}E_4f_w-\partial_{w-4}^2f_w=
    \frac{2^63^5 (w+1) (w+2)^5}{(5 w+12)^4\vphantom{X^{X}}}q^{\frac{5w}{12}+1}+\cdots\\[2mm]
    \bullet\,&\scriptstyle\left(17 w^2+78 w+90\right)\partial_{w-4}^3f_w-
    \frac1{144}\left(191 w^4+1008 w^3+1504 w^2+192
      w-576\right)E_4\partial_{w-4}f_w\\
    &\scriptstyle-\frac1{432}(w+1) \left(81 w^4+376 w^3+560 w^2+528
      w+576\right)E_6f_w\\
    &\scriptstyle= \frac{2^{18}3^7 (w+1) (w+2)^5 (w+3)^5
      (w+4) (w+5)}{(5 w+12)^4 (5
      w+24)^4\vphantom{X^{X}}}q^{\frac{5w}{12}+2}+\cdots  \\[2mm]
    \bullet\,&\scriptstyle-\left(1313 w^6+28678 w^5+255122 w^4+1183008 w^3+3016512
      w^2+ 4012416 w+2177280\right)\partial_{w-4}^4f_w\\
    &\scriptstyle
    +\frac1{144}\bigl(13423 w^8+295800 w^7+2645368 w^6+12166080 w^5+29311504
      w^4+29020416
      w^3-15653376 w^2\\
      &\quad\scriptstyle-56692224 w-33094656\bigr)E_4\partial_{w-4}^2f_w\\
    &\scriptstyle+\frac1{432}\bigl(6561 w^9+136994 w^8+1139536
    w^7+4759344 w^6+10294016 w^5+11541472 w^4+14671104 w^3
    \\&\quad\scriptstyle+41398272
      w^2+63016704
      w+31974912\bigr)E_6\partial_{w-4}f_w\\
    &\scriptstyle+\frac1{2592}(w+1) \bigl(2048 w^9+38685 w^8+287792
    w^7+1130616 w^6+3110288 w^5+8497968 w^4\\
    &\quad\scriptstyle+18484992 w^3+14141952w^2-20570112 w-30855168\bigr)
    E_4^2f_w\\
    &\scriptstyle=\frac{2^{24}3^{13}(w+1) (w+2)^5 (w+3)^5 (w+4)^5 (w+5)
      (w+6)^4 (w+7)}{(5 w+12)^4 (5 w+24)^4(5w+36)^4\vphantom{X^{X}}}
    q^{\frac{5w}{12}+3}+\cdots \\[2mm]
    \bullet\,&\scriptstyle\left(293 w^4+4332 w^3+22968 w^2+51192 w+40824\right)E_4
    \partial_{w-4}^3f_w\\
    &\scriptstyle-\frac43\left(w^5+15 w^4+90 w^3+270 w^2+405 w+243\right)
    E_6\partial_{w-4}^2f_w\\
    &\scriptstyle-\frac1{144}\left(3311 w^6+51234 w^5+291550 w^4+731040
      w^3+717696 w^2-2592 w-256608\right)E_4^2\partial_{w-4}f_w\\
    &\scriptstyle-\frac1{432}(w+1) \left(1313 w^6+19430 w^5+104354
      w^4+251616 w^3+310464 w^2+300672 w+248832\right)E_4E_6f_w\\
    &\scriptstyle=\frac{2^{28}3^{14} (w+1) (w+2)^5 (w+3)^5 (w+4)
      (w+5) (w+6)^4 (w+7) (w+8)}{(5 w+12)^4 (5 w+24)^4 (5
      w+36)^4\vphantom{X^{X}}}q^{\frac{5w}{12}+3}+\cdots.
  \end{split}\raisetag{6.5cm}
\end{equation}
These are extremal quasimodular forms of respective weights
$w+2,\ldots,\allowbreak w+10$,
whose leading coefficient can be read off.
\begin{proposition}\label{prop:depth4}
  Let $(f_w)_{w\in2\mathbb{N}}$ ($w\geq12$) denote the sequence of normalized
  extremal quasimodular forms of depth $4$. Then for $w\equiv0\pmod{12}$
  \begin{equation}\label{eq:depth4-init}\textstyle
    f_{12}=\frac{13025 E_4^3-12796 E_6^2+
      3852 E_2E_4E_6-2706 E_2^2 E_4^2+27500 E_2^3 E_6-
      28875 E_2^4 E_4}{7449432883200\vphantom{X^{X}}}
  \end{equation}
  and
  \begin{equation}\label{eq:depth4-recurr}
    \scriptstyle
     f_{w+12}=
    \frac{5^5 (w+12)^4 (5 w+12)^4 (5 w+24)^4 (5 w+36)^4 (5 w+48)^4}
    {2^{44}3^{23} (w+1) (w+2)^5 (w+3)^5 (w+4)^5 (w+5) (w+6)^4 (w+7)
      (w+8)^5 (w+9)^5 (w+10)^5 (w+11)\vphantom{X^{X}}}\Kup_wf_w.
  \end{equation}
  Furthermore, the functions
  \begin{equation}
    \label{eq:depth4-2up}
    \begin{split}
      f_{w+2}&=\frac3{w+1}\partial_{w-4}f_w\\
      f_{w+4}&=\frac{(5 w+12)^4}{2^63^5 (w+1) (w+2)^5}
      \left(\frac{(w+1)(2w+1)}{18}E_4f_w-\partial_{w-4}^2f_w\right)\\
      f_{w+6}&=\frac{(5 w+12)^4 (5
        w+24)^4}{2^{18}3^7 (w+1) (w+2)^5 (w+3)^5 (w+4) (w+5)}\\
      &\times\left(\left(17 w^2+78 w+90\right)\partial_{w-4}^3f_w-\cdots\right)\\
           f_{w+8}&=
      \frac{(5 w+12)^4 (5 w+24)^4(5w+36)^4}
      {2^{24}3^{13}(w+1) (w+2)^5 (w+3)^5 (w+4)^5 (w+5)
        (w+6)^4 (w+7)}\\
      &\times\left(-\left(1313 w^6+\cdots+2177280\right)\partial_{w-4}^4f_w+\cdots\right)\\
         f_{w+10}&
      =\frac{(5 w+12)^4 (5 w+24)^4 (5 w+36)^4}
      {2^{28}3^{14} (w+1)
        (w+2)^5 (w+3)^5 (w+4) (w+5) (w+6)^4 (w+7) (w+8)}\\
      &\times\left(\left(293 w^4+4332 w^3+22968 w^2+51192 w+40824\right)E_4
 \partial_{w-4}^3f_w-\cdots\right)
    \end{split}\raisetag{4cm}
  \end{equation}
  are normalized extremal quasimodular forms of weights $w+2,\ldots,w+10$ (the
  omitted terms indicated by ``$\ldots$'' are given in
  \eqref{eq:depth4-vanishing}).
\end{proposition}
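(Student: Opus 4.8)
The plan is to follow the template already executed for depths $1$, $2$, and $3$ in Propositions~\ref{prop-depth1}, \ref{prop-depth2}, and~\ref{prop:depth3}: construct a raising operator $\Kup_w$ and a lowering operator $\Kdown_w$ on the sequence of extremal forms, show their composition acts as a scalar on the solution space, and invert $\Kup_w$ to obtain the recurrence. Essentially all of the analytic input has already been assembled before the statement. The operator $\Kup_w$ of \eqref{eq:deth4-up} is designed to remove the terms $q^{5w/12},\dots,q^{5w/12+4}$ from the $q$-expansion of $f_w$, so that $\Kup_wf_w$ is a quasimodular form of weight $w+12$ and depth $4$ vanishing to order $\frac{5w}{12}+5=\frac{5(w+12)}{12}$, hence extremal; and the kernel computation for the map $\Phi$ produces a lowering operator $\Kdown_w=\frac1{\Delta^5}L_w$ of the shape \eqref{eq:depth4-down} carrying $f_w$ to an extremal form of weight $w-12$.

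First I would settle the two boundary pieces. The initial datum \eqref{eq:depth4-init} comes from a finite linear-algebra computation: writing a general element of the six-dimensional space $\QM_{12}^4$ in the basis $E_4^3,E_6^2,E_2E_4E_6,E_2^2E_4^2,E_2^3E_6,E_2^4E_4$ and imposing vanishing to order $5=\frac{5\cdot12}{12}$ at $i\infty$ determines $f_{12}$ up to its normalization. The five intermediate identities \eqref{eq:depth4-2up} need no further work: by Lemma~\ref{lem:quasi-serre} the five combinations displayed in \eqref{eq:depth4-vanishing} are quasimodular of weights $w+2,\dots,w+10$, and the right-hand sides of \eqref{eq:depth4-vanishing} exhibit them as extremal forms with explicitly computed leading coefficients, so dividing by those leading coefficients produces precisely the normalized forms listed in \eqref{eq:depth4-2up}.

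The heart of the argument is the recurrence \eqref{eq:depth4-recurr}. Since $\Kup_wf_w$ is extremal of weight $w+12$, it equals $c_{w+12}f_{w+12}$ for a scalar $c_{w+12}$. To determine $c_{w+12}$ I would apply $\Kdown_{w+12}$ and combine the two facts already recorded: the composite evaluation \eqref{eq:kdown-kup}, and the diagonal action obtained by specializing the displayed value of $\Kdown_wf_w$, namely
\begin{equation*}
  \Kdown_{w+12}f_{w+12}=\frac{5^4}{2^4}(w+12)^4(5w+48)^4(5w+36)^4(5w+24)^4(5w+12)^4f_w.
\end{equation*}
Writing $\Kdown_{w+12}\Kup_wf_w=c_{w+12}\Kdown_{w+12}f_{w+12}$, equating with \eqref{eq:kdown-kup}, and cancelling $f_w$ solves for $c_{w+12}$; its reciprocal is exactly the prefactor in \eqref{eq:depth4-recurr}.

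The genuine obstacle lies in what is packaged inside \eqref{eq:kdown-kup} and in the construction of $\Kdown_w$. Establishing that the composition $\Kdown_{w+12}\Kup_w$ reduces to a scalar means composing an order-$5$ and an order-$4$ operator, a priori of order $9$, and then reducing modulo the defining relation \eqref{eq:depth4-diff} until nothing but the scalar remains; this is a heavy symbolic manipulation in $E_2,E_4,E_6$ with $w$-dependent rational coefficients, for which the bulky polynomials $p_0,\dots,p_5$ and the forms $C_{40},\dots,C_{48}$ of the Appendix are indispensable. I expect this reduction --- together with the parallel determination of the five leading coefficients in \eqref{eq:depth4-vanishing} --- to be the truly laborious step, best handed to a computer algebra system, with the structural argument above serving as the guarantee that the end result must collapse to a single scalar.
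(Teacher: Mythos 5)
Your proposal is correct and follows essentially the same route as the paper: the same raising operator $\Kup_w$ chosen to kill the coefficients of $q^{5w/12},\ldots,q^{5w/12+4}$ so that $\Kup_wf_w$ is extremal of weight $w+12$, the same kernel construction of $\Kdown_w=\frac1{\Delta^5}L_w$ via the map $\Phi$ on $\M_{40}\oplus\cdots\oplus\M_{48}$, the same determination of $c_{w+12}$ by combining \eqref{eq:kdown-kup} with the specialization $\Kdown_{w+12}f_{w+12}=\frac{5^4}{2^4}(w+12)^4(5w+48)^4(5w+36)^4(5w+24)^4(5w+12)^4f_w$, and the same reading off of leading coefficients in \eqref{eq:depth4-vanishing} to obtain \eqref{eq:depth4-2up}. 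The only (harmless) slip is that both $\Kup_w$ and $\Kdown_{w+12}$ are operators of order $4$, so their composition is a priori of order $8$, not $9$, before reduction modulo the order-$5$ equation \eqref{eq:depth4-diff}.
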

As an immediate consequence of
Propositions~\ref{prop-depth1}--~\ref{prop:depth4}, we obtain the following
fact conjectured in \cite{Kaneko_Koike2006:extremal_quasimodular_forms}.
\begin{theorem}\label{thm:kaneko-koike}
  The denominators of the coefficients of the normalized extremal quasimodular
  forms of weight $w$ and depth $\leq4$, are divisible only by primes $<w$.
\end{theorem}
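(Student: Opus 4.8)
The plan is to argue by induction on the weight within each residue class, feeding the explicit recursions of Propositions~\ref{prop-depth1}--\ref{prop:depth4} and keeping careful track of which primes can enter the denominators of the Fourier coefficients. Two elementary observations make this tractable. First, $E_2$, $E_4$, $E_6$ and $\Delta$ all have $q$-expansions with integer coefficients, so multiplying by any of them leaves the set of primes dividing the denominators unchanged. Second, the normalized derivative $f\mapsto f'=q\,df/dq$ multiplies the $n$-th coefficient by the integer $n$ and so introduces no new primes; hence a single Serre derivative $\partial_wf=f'-\frac{w}{12}E_2f$, and therefore every iterate $\partial_w^k$, can enlarge the set of denominator primes only by the factors of $12$, namely $\{2,3\}$. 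Consequently each differential operator occurring in the recursions---a polynomial in $w$ times a composition of Serre derivatives and multiplications by $E_4,E_6,\Delta$, with rational coefficients whose denominators involve only $2,3,5$---sends a form whose denominators have prime divisors $<N$ to one whose denominators have prime divisors among those together with $\{2,3,5\}$.

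The base cases are the initial forms, where I would verify the factorizations directly: the denominator $720=2^4\cdot3^2\cdot5$ in \eqref{eq:depth1-init} and $51840=2^7\cdot3^4\cdot5$ in \eqref{eq:depth3-init} involve only $2,3,5<6$; the denominator $288=2^5\cdot3^2$ in \eqref{eq:depth2-init} involves only $2,3<4$; and $7449432883200=2^{16}\cdot3^{10}\cdot5^2\cdot7\cdot11$ in \eqref{eq:depth4-init} involves only $2,3,5,7,11<12$. In every case all prime divisors lie strictly below the weight. For the inductive step I would fix a residue class and consider the recursion expressing a form $f_{w'}$ of weight $w'$ in terms of $f_w$ of smaller weight $w$ (with $w'-w\in\{2,4,6\}$ for depths $1,2,3$, and $w'-w\in\{2,4,6,8,10,12\}$ for depth~$4$, the intermediate steps filling in the weights between consecutive multiples of the modulus). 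Applying the operator to $f_w$ yields a form whose denominators have prime divisors $<w'$: the inductive hypothesis bounds the primes coming from $f_w$ by $w\le w'$, while the operator adds only primes in $\{2,3,5\}$, each of which is below the minimal weight in the relevant depth and hence $<w'$ throughout.

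It then remains to control the scalar prefactor, and this is the crux. In every recursion the prefactor denominator has the shape $c\cdot\prod_j(w+a_j)$, where $c$ is an integer whose prime factors lie in $\{2,3\}$ (the constants $72$, $16$, $2^2 3^7$, $2^{44}3^{23}$, and the like) and where each $a_j$ is a positive integer with $a_j<w'-w$. The key point is that any prime $p$ dividing a factor $w+a_j$ automatically satisfies $p\le w+a_j\le w+(w'-w)-1=w'-1<w'$, so it is strictly below the target weight without any further work; meanwhile the numerator factors such as $5^5$ and the $(5w+a)^4$ in \eqref{eq:depth4-recurr} play no role. Thus the prefactor too contributes only primes $<w'$, and $f_{w'}$ has denominators with prime divisors $<w'$, closing the induction.

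I expect the only genuine obstacle to be the bookkeeping that validates the inductive step for the heaviest case. One must confirm that every constant in the prefactors factors into primes $\le5$, and---most delicately---that the coefficients of the depth-$4$ operators $\Kup_w$ and $\Kdown_w$, assembled from the Appendix polynomials $p_0(w),\dots,p_5(w)$ and the modular forms $C_{40},\dots,C_{48}$, as well as the coefficients of the auxiliary identities in \eqref{eq:depth4-vanishing}, carry no prime beyond $\{2,3,5\}$ in their denominators. Once these explicit factorizations are checked, the uniform bound $p<w'$ for the linear factors $w+a_j$ makes the conclusion immediate, and the theorem follows from Propositions~\ref{prop-depth1}--\ref{prop:depth4}.
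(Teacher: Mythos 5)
Your proposal is correct and is precisely the argument the paper intends: the paper states Theorem~\ref{thm:kaneko-koike} as an ``immediate consequence'' of Propositions~\ref{prop-depth1}--\ref{prop:depth4}, and your induction on the weight---verifying the base-case factorizations ($720=2^4\cdot3^2\cdot5$, $288=2^5\cdot3^2$, $51840=2^7\cdot3^4\cdot5$, $7449432883200=2^{16}\cdot3^{10}\cdot5^2\cdot7\cdot11$), observing that Serre derivatives and multiplication by $E_2,E_4,E_6,\Delta$ introduce at most the primes $\{2,3,5\}$ from the explicit operator constants, and bounding the primes in the linear prefactor denominators by $w+a_j\leq w'-1<w'$---is exactly the bookkeeping that makes this immediacy rigorous. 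One cosmetic remark: your phrase ``below the minimal weight in the relevant depth'' fails literally for $5$ versus the depth-$2$ minimum $w=4$, but this is harmless since the depth-$2$ constants ($36$, $16$, $6$) involve only $\{2,3\}$ and every inductive target weight satisfies $w'\geq6$.
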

\section*{Acknowledgment}
  The author is grateful to an anonymous referee for her/his very helpful
  comments on an earlier version of this paper. The referee suggested to
  formulate Proposition~\ref{prop:solution} as a separate statement and
  provided the reference \cite{Pellarin_Nebe2019:extremal_quasi_modular}, which
  contains a closely related study.

\appendix
\section{Coefficients of the operators for depth $4$}
\label{sec:coeff-oper-depth4}
In this appendix we collect the more elaborate formulas used especially in
Section~\ref{sec:depth-4}. The polynomials $p_0,\ldots,p_5$ in equations
\eqref{eq:depth4-p0}--\eqref{eq:depth4-p5} and the modular forms
$C_{40},\ldots,C_{48}$ in equations \eqref{eq:C40}--\eqref{eq:C48} were
computed with the help of \texttt{Mathematica}.
\begin{landscape}\nopagebreak
\begin{align}
    p_0(w)&=\scriptstyle 53567 w^{14}+4499628 w^{13}+173318340
    w^{12}+4055616864 w^{11}+
    64374205218 w^{10}+732790207224 w^9+6165100658404 w^8+38914973459904 w^7
    \notag\\
    &\scriptstyle+
    185044363180416 w^6
    +659055640624128 w^5+1729058937394176 w^4+
    3237068849283072 w^3
    +4084118362128384 w^2+3105388005949440 w\label{eq:depth4-p0}\\
    &\scriptstyle+1072718335180800\notag\\
\notag\\[-4mm]
    p_1(w)&=\scriptstyle21257 w^{11}+1465884
      w^{10}+45186990 w^9+821051740 w^8+9759703548 w^7
      +
      79588527156 w^6
      +453687847200 w^5\notag\\
      &\scriptstyle+1804779218520 w^4+4900200364800 w^3+
      8628400143360 w^2
      +8845395333120 w+3990767616000\label{eq:depth4-p1}\\
\notag\\[-4mm]
p_2(w)&=\scriptstyle2662740 w^{16}+224120550
    w^{15}+8648003840 w^{14}+202621853220 w^{13}+
  3217542322665 w^{12}
  +36586266504480 w^{11}+306658234963680 w^{10}\notag\\
  &\scriptstyle+
  1919356528986240 w^9
  +8970889439482816 w^8+30866477857195008 w^7+
  75319919247624192 w^6
  +118664936756305920 w^5\label{eq:depth4-p2}\\
    &\scriptstyle+83296021547483136 w^4
  -82769401579438080 w^3
  -258790551639293952 w^2-245119018746249216 w  -86822757140004864\notag\\
  \notag\\[-4mm]
p_3(w)&=\scriptstyle4272785 w^{17}+351970350
    w^{16}+13234823080 w^{15}+300533087760 w^{14}+4592608729932 w^{13}
    +49787752253076 w^{12}+392868254956864 w^{11}\notag\\
    &\scriptstyle  +2274866661846720 w^{10}+9597118952486912 w^9
    +28789901067644544 w^8  +58741997991303168 w^7
  +79017091035181056 w^6\label{eq:depth4-p3}\\
    &\scriptstyle
  +100071999240486912w^5+278562611915587584 w^4+779359222970449920 w^3
  +1260737947219525632 w^2+1054463073573666816 w\notag\\
    &\scriptstyle+355736061701259264\notag\\
 \notag\\[-4mm]
p_4(w)&=\scriptstyle517135 w^{17}+40772970 w^{16}
  +1455719580 w^{15}+31076826800 w^{14}+441034824168 w^{13}
  +4375275488634 w^{12}+31084796008256 w^{11}\notag\\
&\scriptstyle  +160090786631040 w^{10}+608772267089664 w^9
  +1834128793979392 w^8 +5229385586024448 w^7+15775977503047680 w^6
+40287913631023104  w^5\label{eq:depth4-p4}\\
&\scriptstyle+57115900062203904 w^4-19258645489385472 w^3
  -224285038806564864 w^2  -343616934723452928 w-182090547421249536\notag\\
\notag\\[-4mm]
p_5(w)&=\scriptstyle531441 w^{13}+36690686
    w^{12}+1133566168 w^{11}+20680195920 w^{10}+247548700336 w^9
    +2043291298652 w^8+11897624359104 w^7+49185666453888 w^6\notag\\
   &\scriptstyle +143692776009216 w^5
   +293687697411072 w^4+418695721574400 w^3+426532499288064 w^2
+316421756411904 w+135523565862912.\label{eq:depth4-p5}
\end{align}

\begin{equation}
  \label{eq:C40}
  \begin{split}
    C_{40}=&\scriptstyle212336640 (w-9) (w-3) \Bigl(26359
      w^{14}-2214156 w^{13}+85087955 w^{12}-1981432728 w^{11}+31214109018
      w^{10}
      -351608948568 w^9+2918019038293 w^8\\
      &\scriptstyle-18107342458608 w^7+84336226011558 w^6-293055231675096
      w^5+746938048608792 w^4-1352381369540544 w^3+1642191216192000 w^2\\
      &\scriptstyle-1195772294131200
      w+393589456128000\Bigr)E_4 E_6^6\\
+&\scriptstyle1658880 \Bigl(51900019 w^{16}-4982401824 w^{15}+221226817445
   w^{14}-6027400068900 w^{13}+112718884720404 w^{12}-1533303103010400
   w^{11}\\
   &\scriptstyle+15683910429124776 w^{10}
   -122977017767207520 w^9+746511198878954304
   w^8-3517545090663659520 w^7+12814220784607361280 w^6\\
   &\scriptstyle-35687146233130066944
   w^5
   +74437303893443933184 w^4-112350543174642769920
   w^3+115665398295339663360 w^2\\
   &\scriptstyle-72541839922746163200
   w+20875509918437474304\Bigr)E_4^4 E_6^4\\
+&\scriptstyle 1244160 \Bigl(79223933 w^{16}-7605497568
      w^{15}+337672506115 w^{14}-9198636642300 w^{13}+171985015024288
      w^{12}-2338768976800320 w^{11}\\
      &\scriptstyle+23913646971494624 w^{10}
  -187421212329029760 w^9+1137138517815826176
  w^8-5355376637586038784 w^7+19499550045226841088 w^6\\
  &\scriptstyle-54282475542685077504
  w^5+113193191971235303424 w^4-170842339568900800512 w^3
  +175944268854662529024 w^2\\
  &\scriptstyle-110441402045312532480 w
  +31829998802618548224\Bigr) E_4^7E_6^2\\
+&\scriptstyle1296 \Bigl(5782232065 w^{16}-555094278240
   w^{15}+24644070643200 w^{14}-671264352864000 w^{13}+12548411370416640
   w^{12}-170603540496691200 w^{11}\\
   &\scriptstyle+1743913567470202880
   w^{10}-13663290089279078400 w^9+82868952111275704320
   w^8-390124920526971863040 w^7\\
   &\scriptstyle+1419971443258483998720 w^6
   -3951666980565374730240 w^5+8238643867091026575360
   w^4-12434466652068469800960 w^3\\
   &\scriptstyle+12809293284994355036160 w^2
   -8045822842460386099200 w+2321594527761517510656\Bigr)
   E_4^{10}
  \end{split}
\end{equation}
\begin{equation}
  \label{eq:C42}
  \begin{split}
    C_{42}=
&\scriptstyle8847360 (w-10) (w-9) (w-7) (w-4) (w-3) (w-1) \Bigl(21257
   w^{11}-1340040 w^{10}+37636350 w^9-621010700 w^8+6681486588 w^7-49160979324
   w^6\\
   &\scriptstyle+252123174720 w^5-900257357160 w^4+2191152850560 w^3-3459266009856 w^2+3186365921280 w-1296999475200\Bigr)E_6^7\\
   +&\scriptstyle221184 (w-1) \Bigl(59652285 w^{16}-5727423730
   w^{15}+254372232445 w^{14}-6933224340880 w^{13}+129733172064000
   w^{12}-1766135709167940 w^{11}\\
   &\scriptstyle+18084440432438220
   w^{10}-141993070201020780 w^9+863455748585349240 w^8-4077640101288170880
   w^7\\
   &\scriptstyle+14896222070817504960 w^6
   -41630583090120877440 w^5+87211051870194381312
   w^4-132333305508808224768 w^3+137125544222548426752 w^2\\
   &\scriptstyle-86677899606946971648 w+25177794683564851200\Bigr)
   E_4^3E_6^5\\
+&\scriptstyle34560 (w-1) \Bigl(1163484751 w^{16}-111693353753 w^{15}+4959075896102
   w^{14}-135096500329618 w^{13}+2526019283240820 w^{12}\\
   &\scriptstyle-34353212893709184
   w^{11}
   +351290143724951040 w^{10}-2753475334119164160
   w^9+16707612532291757568 w^8-78689634714927968256 w^7\\
   &\scriptstyle+286520082131305949184
   w^6-797543727623713492992 w^5+1662730176637474947072 w^4-2508547795667083984896
   w^3\\
   &\scriptstyle+2581778110323244400640 w^2
   -1619027909322136879104 w+465981816686361182208\Bigr)
   E_4^6 E_6^3\\
+&\scriptstyle
    432 (w-1) \Bigl(28531797265 w^{16}-2738717899920 w^{15}+121567987037280
    w^{14}-3310530840743520 w^{13}+61865709853233600 w^{12}\\
    &\scriptstyle-840720468904181760 w^{11}+8588483182402488320
    w^{10}-67231802818857031680 w^9+407292115696721326080
    w^8-1914415680515583836160 w^7\\
    &\scriptstyle+6953430231459668459520
   w^6-19296803817086754816000 w^5+40082761664735503712256
   w^4-60204747917480347828224 w^3\\
   &\scriptstyle+61632482296473191448576 w^2-38403785466601122299904
   w+10969770793919657803776\Bigr)E_4^9 E_6
  \end{split}
\end{equation}
\begin{equation}
  \label{eq:C44}
  \begin{split}
    C_{44}=
   &\scriptstyle73728 \Bigl(12532755 w^{18}-1228914345 w^{17}+55932044285
   w^{16}-1568787184755 w^{15}+30362687859910 w^{14}-430260670247520 w^{13}\\
   &\scriptstyle+4622655124809540 w^{12}-38470537186614840
   w^{11}+251211075702837014 w^{10}-1295864332043809260 w^9+5290237710120191760
   w^8\\
   &\scriptstyle-17044689685346563680 w^7+43013914131829395456 w^6-83886321769065465408
   w^5+123718742229427747200 w^4-133286662841842990080 w^3\\
   &\scriptstyle+98925565236150067200 w^2-45228008659406118912
   w+9609196389953863680\Bigr) E_4^2E_6^6\\
   +&\scriptstyle23040 \Bigl(208766802
   w^{18}-20738706663 w^{17}+957610061114 w^{16}-27294230630727
   w^{15}+537821633898004 w^{14}-7775876558499438 w^{13}\\
   &\scriptstyle+85445305359966780 w^{12}-729304045355384352
   w^{11}+4899650334544430160 w^{10}-26095137555730257504
   w^9+110418717465901382976 w^8\\
   &\scriptstyle-370315908903799143936
   w^7+977199822897307158528 w^6-2002217065285338178560
   w^5+3117189914871002634240 w^4\\
   &\scriptstyle-3561207870202807173120 w^3+2814329391503205335040 w^2
   -1374584400002855337984 w+312699758864962682880\Bigr)E_4^5E_6^4\\
   + &\scriptstyle432 \Bigl(5640942435 w^{18}-574145660265
    w^{17}+27210408942670 w^{16}-797473001419560 w^{15}+16188499652022320
    w^{14}-241595348932913040 w^{13}\\
    &\scriptstyle+2745725576819892320 w^{12}-24286261064854871040
    w^{11}+169405063961957458432 w^{10}-938429115674081925120 w^9\\
    &\scriptstyle+4136645160744791040000
   w^8-14470987007027825786880 w^7+39867414308025880412160
   w^6-85316923540371471630336 w^5\\
   &\scriptstyle+138712136173982221271040 w^4-165366920569631924551680
   w^3+136184996014666316513280 w^2\\
   &\scriptstyle-69179329337614286192640 w+16327133797959811989504\Bigr)
   E_4^8E_6^2\\
+&\scriptstyle18 \Bigl(3825382395 w^{18}-417693655380 w^{17}+21201009227390 w^{16}-664296074843520 w^{15}+14390341237827840
   w^{14}-228724925396964480 w^{13}\\
   &\scriptstyle+2762631554345230080 w^{12}-25910717672216739840
   w^{11}+191177029926129983488 w^{10}-1117275590093461463040 w^9\\
   &\scriptstyle+5181258594279899381760 w^8-19011361988249752043520
   w^7+54763132291635013484544 w^6-122130979006167868243968 w^5\\
   &\scriptstyle+206227290808252579184640
   w^4-254460321591668401766400 w^3+216144824557113157091328 w^2\\
   &\scriptstyle-112869386433737376399360 w+27296067396795679899648\Bigr)
   E_4^{11}
  \end{split}
\end{equation}
\begin{equation}
  \label{eq:C46}
  \begin{split}
    C_{46}=
&\scriptstyle12288 (w-9) (w-3) \Bigl(21385 w^{17}-3000540 w^{16}+181158795 w^{15}-6431059740
   w^{14}+152509563792 w^{13}-2584774376916 w^{12}+32580209973804 w^{11}\\
   &\scriptstyle-312952105940940 w^{10}+2324178632140120 w^9-13441417608474240
   w^8+60599123972703360 w^7-211935600366998400 w^6\\
   &\scriptstyle+568296967045625280 w^5-1144426627632504960 w^4+1672115398874698752 w^3-1670883597388947456
   w^2\\
   &\scriptstyle+1019823559435689984 w-286322948470702080\Bigr) E_4E_6^7\\
&+\scriptstyle
    1536 (w-9) (w-3) \Bigl(43564885 w^{17}-4199424540 w^{16}+188153370170
    w^{15}-5202745378740 w^{14}+99416655438692 w^{13}-1392798686959116 w^{12}\\
    &\scriptstyle+14810213044144264 w^{11}-122061062138129400
    w^{10}+789187186132282416 w^9-4024771696392895584 w^8+16186815100766545920
    w^7\\
    &\scriptstyle-51035279993385265152
   w^6+124573554816237061632 w^5-230424866987544400896
   w^4+311845100542176927744 w^3\\
   &\scriptstyle-290906510930217861120 w^2+166976811547859681280 w
   -44392037471484641280\Bigr) E_4^4 E_6^5\\
+&\scriptstyle48 (w-3) \Bigl(5935658055 w^{18}-604130680965
   w^{17}+28644817180790 w^{16}-840407590446360 w^{15}+17090822555729336
   w^{14}\\
   &\scriptstyle-255749220384903192 w^{13}
   +2917535433452941424
   w^{12}-25935973597843047168 w^{11}+182093301447056161536
   w^{10}\\
   &\scriptstyle-1017033389895633205248 w^9
   +4528828858768286662656 w^8-16038621164741078679552
   w^7+44835034780192894009344 w^6\\
   &\scriptstyle-97590317268828859711488 w^5
   161770255432330054828032 w^4-197074103775825302913024 w^3+166176836346425804587008
   w^2\\
   &\scriptstyle-86567632937344309395456 w+20972979080173738524672\Bigr)
   E_4^7E_6^3\\
+&\scriptstyle3 (w-3) \Bigl(35224547665 w^{18}-3533943045270
   w^{17}+164951941875370 w^{16}-4757220455686080 w^{15}+94946288308939008
   w^{14}\\
   &\scriptstyle-1391879999449512576 w^{13}+15524090523960134912 w^{12}-134623421025963823104
   w^{11}+919695494473819799552 w^{10}\\
   &\scriptstyle-4984015509758024933376 w^9+21464444623798611689472
   w^8-73246940966966519660544 w^7+196469687001100291670016 w^6\\
   &\scriptstyle-408349070730564361519104 w^5+642737421946789924700160 w^4-738663893223616618168320 w^3+583143789019164227665920
   w^2\\
   &\scriptstyle-281900069458357354758144 w+62720415667791014658048\Bigr)
   E_4^{10}E_6\\
  \end{split}
\end{equation}
\begin{equation}
  \label{eq:C48}
  \begin{split}
    C_{48}=
   &\scriptstyle4096 (w-10) (w-9) (w-7)
   (w-4)^5 (w-3) (w-1) \Bigl(16 w^{10}-22025 w^9+1225150 w^8-30082800
   w^7+422010000 w^6-3708422460 w^5+21084612660 w^4\\
   &\scriptstyle-77240291040
     w^3+175004264880 w^2-221507758080 w+119022943536\Bigr) E_6^8\\
+&\scriptstyle512 (w-9) (w-4)^4 (w-3) \Bigl(2457393 w^{14}-206994196
    w^{13}+7967808727 w^{12}-185579433574 w^{11}+2918385315234
    w^{10}-32734570590132 w^9\\
    &\scriptstyle+269644653708324 w^8-1653949412659908 w^7+7574407203930120
    w^6-25704354625374456 w^5+63427818462081552 w^4\\
    &\scriptstyle-109936627329417984 w^3+125935036372661760
    w^2-84849181728168960 w+25173889100697600\Bigr) E_4^3E_6^6\\
+&\scriptstyle16 (w-4)^4
   \Bigl(372768821 w^{16}-35518302589 w^{15}+1564351553185
   w^{14}-42250630382915 w^{13}+782728613878590 w^{12}-10539973948217796
   w^{11}\\
   &\scriptstyle+106642134401044044 w^{10}-826431392449118340
   w^9+4953943715462672760 w^8-23029565126880689280 w^7+82687822898011670016
   w^6\\
   &\scriptstyle-226727406051812308224
   w^5+465078193737971627520 w^4-689454957866260193280
   w^3+696182683156551475200 w^2\\
   &\scriptstyle-427583296582966296576
   w+120287920179051823104\Bigr)E_4^6E_6^4\\
+&\scriptstyle(w-4)^4 \Bigl(2314158061 w^{16}-225295075074
    w^{15}+10156109605065 w^{14}-281276718421650 w^{13}+5354355751604718
    w^{12}\\
    &\scriptstyle-74250742207449696 w^{11}+775561858607479616 w^{10}-6221336083117357440
    w^9+38716279294460726016 w^8-187454904571023464448 w^7\\
    &\scriptstyle+703499359272507740160 w^6-2024100636419312971776
    w^5+4375390173868546007040 w^4-6867228800076590481408 w^3\\
    &\scriptstyle+7378429565839006236672 w^2-4847796658391713579008
    w+1467072833894567903232\Bigr) E_4^9E_6^2\\
+&\scriptstyle\frac{3}{16} (w-4)^4 \Bigl(177147 w^{16}-4094579648 w^{15}+374950410640 w^{14}-15829704501920 w^{13}+408049598642272
   w^{12}-7176728984678912 w^{11}\\
   &\scriptstyle+91165949198565376 w^{10}-863556865787120640
   w^9+6207504451510781952 w^8-34121559189028110336 w^7+143351014974408622080
   w^6\\
   &\scriptstyle-456319260766953603072 w^5+1080402954748938289152
   w^4-1841108952145673060352 w^3+2131230194096535502848 w^2\\
   &\scriptstyle-1498210290755300229120
   w+482085103232027197440\Bigr) E_4^{12}\\
  \end{split}
 \end{equation}
\end{landscape}


\begin{thebibliography}{10}

\bibitem{Arike_Kaneko_Nagatomo+2016:affine_vertex_operator}
Y.~Arike, M.~Kaneko, K.~Nagatomo, and Y.~Sakai, \emph{{A}ffine vertex operator
  algebras and modular linear differential equations}, Lett. Math. Phys.
  \textbf{106} (2016), no.~5, 693--718.

\bibitem{Berndt_Knopp2008:heckes_theory_modular}
B.~C. Berndt and M.~I. Knopp, \emph{{H}ecke's theory of modular forms and
  {D}irichlet series}, Monographs in Number Theory, vol.~5, World Scientific
  Publishing Co. Pte. Ltd., Hackensack, NJ, 2008.

\bibitem{Bruinier_Geer_Harder+2008:1_2_3-modular}
J.~H. Bruinier, G.~van~der Geer, G.~Harder, and D.~Zagier, \emph{{T}he 1-2-3 of
  modular forms}, Universitext, Springer-Verlag, Berlin, 2008, Lectures from
  the Summer School on Modular Forms and their Applications held in
  Nordfjordeid, June 2004, Edited by Kristian Ranestad.

\bibitem{Choie_Lee2019:jacobi_like_forms}
Y.~J. Choie and M.~H. Lee, \emph{{J}acobi-{L}ike {F}orms, {P}seudodifferential
  {O}perators, and {Q}uasimodular {F}orms}, Monographs in Mathematics, Springer
  International Publishing, 2019.

\bibitem{Cohn-Kumar-Miller+2017:sphere_packing_24}
H.~Cohn, A.~Kumar, S.~D. Miller, D.~Radchenko, and M.~Viazovska, \emph{{T}he
  sphere packing problem in dimension 24}, Ann. of Math. (2) \textbf{185}
  (2017), no.~3, 1017--1033.

\bibitem{Cohn_Kumar_Miller+2019:universal_optimality} H.~Cohn, A.~Kumar,
  S.~D. Miller, D.~Radchenko, and M.~Viazovska, \emph{{U}niversal optimality of
    the {$E_8$} and {L}eech lattices and interpolation formulas},
  \url{https://arxiv.org/abs/1902.05438}, Feb 2019.

\bibitem{Diamond_Shurman2005:first_course_modular}
F.~Diamond and J.~Shurman, \emph{A first course in modular forms}, Graduate
  Texts in Mathematics, vol. 228, Springer-Verlag, New York, 2005.

\bibitem{Feigenbaum_Grabner_Hardin2020:eigenfunctions_fourier_transform}
A.~S. Feigenbaum, P.~J. Grabner, and D.~P. Hardin, \emph{{E}igenfunctions of
  the {F}ourier {T}ransform with specified zeros}, Math. Proc. Cambridge
  Philos. Soc. (2020), to appear, \url{https://arxiv.org/abs/1907.08558}.

\bibitem{Franc_Mason2014:fourier_coefficients_vector}
C.~Franc and G.~Mason, \emph{{F}ourier coefficients of vector-valued modular
  forms of dimension 2}, Canad. Math. Bull. \textbf{57} (2014), no.~3,
  485--494.

\bibitem{Franc_Mason2016:hypergeometric_series_modular} C.~Franc and G.~Mason,
  \emph{{H}ypergeometric series, modular linear differential equations and
    vector-valued modular forms}, Ramanujan J. \textbf{41} (2016), no.~1-3,
  233--267.

\bibitem{Frobenius1873:uber_integration_linearen}
F.~G. Frobenius, \emph{\"{U}ber die {I}ntegration der linearen
  {D}ifferentialgleichungen durch {R}eihen}, J. reine angew. Math. \textbf{76}
  (1873), 214--235.

\bibitem{Henrici1977:applied_computational_complex}
P.~Henrici, \emph{{A}pplied and computational complex analysis. {V}ol. 2},
  Wiley Interscience [John Wiley \& Sons], New York-London-Sydney, 1977,
  Special functions---integral transforms---asymptotics---continued fractions.

\bibitem{Hille1976:ordinary_differential_equations}
E.~Hille, \emph{{O}rdinary differential equations in the complex domain},
  Wiley-Interscience [John Wiley \& Sons], New York-London-Sydney, 1976, Pure
  and Applied Mathematics.

\bibitem{Iwaniec1997:topics_classical_automorphic}
H.~Iwaniec, \emph{{T}opics in classical automorphic forms}, Graduate Studies in
  Mathematics, vol.~17, American Mathematical Society, Providence, RI, 1997.

\bibitem{Kaneko_Koike2003:modular_forms_hypergeometric}
M.~Kaneko and M.~Koike, \emph{{O}n modular forms arising from a differential
  equation of hypergeometric type}, Ramanujan J. \textbf{7} (2003), no.~1-3,
  145--164, Rankin memorial issues.

\bibitem{Kaneko_Koike2004:quasimodular_solutions_differential} M.~Kaneko and
  M.~Koike, \emph{{Q}uasimodular solutions of a differential equation of
    hypergeometric type}, Galois theory and modular forms, Dev. Math., vol.~11,
  Kluwer Acad. Publ., Boston, MA, 2004, pp.~329--336.

\bibitem{Kaneko_Koike2006:extremal_quasimodular_forms} M.~Kaneko and M.~Koike,
  \emph{{O}n extremal quasimodular forms}, Kyushu J. Math. \textbf{60} (2006),
  no.~2, 457--470.

\bibitem{Kaneko_Nagatomo_Sakai2017:third_order_modular}
M.~Kaneko, K.~Nagatomo, and Y.~Sakai, \emph{{T}he third order modular linear
  differential equations}, J. Algebra \textbf{485} (2017), 332--352.

\bibitem{Kaneko_Zagier1995:generalized_jacobi_theta}
M.~Kaneko and D.~Zagier, \emph{{A} generalized {J}acobi theta function and
  quasimodular forms}, The moduli space of curves ({T}exel {I}sland, 1994),
  Progr. Math., vol. 129, Birkh\"{a}user Boston, Boston, MA, 1995,
  pp.~165--172.

\bibitem{Kawasetsu_Sakai2018:modular_linear_differential}
K.~Kawasetsu and Y.~Sakai, \emph{{M}odular linear differential equations of
  fourth order and minimal {$\mathcal{W}$}-algebras}, J. Algebra \textbf{506}
  (2018), 445--488.

\bibitem{Knopp_Mason2003:vector_valued_modular}
M.~Knopp and G.~Mason, \emph{{O}n vector-valued modular forms and their
  {F}ourier coefficients}, Acta Arith. \textbf{110} (2003), no.~2, 117--124.

\bibitem{Knopp_Mason2004:vector_valued_modular} M.~Knopp and G.~Mason,
  \emph{{V}ector-valued modular forms and {P}oincar\'{e} series}, Illinois
  J. Math. \textbf{48} (2004), no.~4, 1345--1366.

\bibitem{Knopp_Mason2011:logarithmic_vector_valued} M.~Knopp and G.~Mason,
  \emph{{L}ogarithmic vector-valued modular forms}, Acta Arith.  \textbf{147}
  (2011), no.~3, 261--262.

\bibitem{Knopp_Mason2012:logarithmic_vector_valued} M.~Knopp and G.~Mason,
  \emph{{L}ogarithmic vector-valued modular forms and polynomial-growth
    estimates of their {F}ourier coefficients}, Ramanujan J. \textbf{29}
  (2012), no.~1-3, 213--223.

\bibitem{Lang1995:introduction_to_modular}
S.~Lang, \emph{{I}ntroduction to modular forms}, Grundlehren der Mathematischen
  Wissenschaften [Fundamental Principles of Mathematical Sciences], vol. 222,
  Springer-Verlag, Berlin, 1995, With appendixes by D. Zagier and Walter Feit,
  Corrected reprint of the 1976 original.

\bibitem{Marks_Mason2010:structure_module_vector}
C.~Marks and G.~Mason, \emph{{S}tructure of the module of vector-valued modular
  forms}, J. Lond. Math. Soc. (2) \textbf{82} (2010), no.~1, 32--48.

\bibitem{Mason2007:vector_valued_modular}
G.~Mason, \emph{{V}ector-valued modular forms and linear differential
  operators}, Int. J. Number Theory \textbf{3} (2007), no.~3, 377--390.

\bibitem{Mason2008:2_dimensional_vector}
G.~Mason, \emph{2-dimensional vector-valued modular forms}, Ramanujan J.
  \textbf{17} (2008), no.~3, 405--427.

\bibitem{Mason2012:fourier_coefficients_vector}
G.~Mason, \emph{{O}n the {F}ourier coefficients of 2-dimensional vector-valued
  modular forms}, Proc. Amer. Math. Soc. \textbf{140} (2012), no.~6,
  1921--1930.

\bibitem{Mono2020:conjecture_kaneko_koike}
A.~Mono, \emph{{O}n a conjecture of {K}aneko and {K}oike},
  \url{https://arxiv.org/abs/2005.06882v1}, May 2020.

\bibitem{Pellarin_Nebe2019:extremal_quasi_modular}
F.~Pellarin and G.~Nebe, \emph{{O}n extremal quasi-modular forms after {K}aneko
  and {K}oike}, \url{http://arXiv:1910.11668}, Oct 2019.

\bibitem{Royer2012:quasimodular_forms_introduction}
E.~Royer, \emph{{Q}uasimodular forms: an introduction}, Ann. Math. Blaise
  Pascal \textbf{19} (2012), no.~2, 297--306.

\bibitem{Shimura2012:modular_forms_basics}
G.~Shimura, \emph{Modular forms: basics and beyond}, Springer Monographs in
  Mathematics, Springer, New York, 2012.

\bibitem{Stein2007:modular_forms_computational}
W.~Stein, \emph{Modular forms, a computational approach}, Graduate Studies in
  Mathematics, vol.~79, American Mathematical Society, Providence, RI, 2007,
  With an appendix by Paul E. Gunnells.

\bibitem{Teschl2012:ordinary_differential_equations}
G.~Teschl, \emph{{O}rdinary differential equations and dynamical systems},
  Graduate Studies in Mathematics, vol. 140, American Mathematical Society,
  Providence, RI, 2012.

\bibitem{Viazovska2017:sphere_packing_8}
M.~S. Viazovska, \emph{{T}he sphere packing problem in dimension 8}, Ann. of
  Math. (2) \textbf{185} (2017), no.~3, 991--1015.

\bibitem{Yamashita2010:construction_extremal_quasimodular}
T.~Yamashita, \emph{{O}n a construction of extremal quasimodular forms of depth
  two}, Master's thesis, Tsukuba University, 2010, Japanese.

\bibitem{Zagier1994:modular_forms_differential}
D.~Zagier, \emph{{M}odular forms and differential operators}, Proc. Indian
  Acad. Sci. Math. Sci. \textbf{104} (1994), no.~1, 57--75, K. G. Ramanathan
  memorial issue.

\bibitem{Zagier2008:elliptic_modular_forms} D.~Zagier, \emph{{E}lliptic modular
    forms and their applications}, in \emph{The 1-2-3 of modular forms}
  \cite{Bruinier_Geer_Harder+2008:1_2_3-modular}, pp.~1--103.

\end{thebibliography}
\end{document}